\let\ams@starttoc\@starttoc
\let\@starttoc\ams@starttoc
\patchcmd{\@starttoc}{\makeatletter}{\makeatletter\parskip\z@}{}{}
\numberwithin{equation}{section}
\theoremstyle{plain}
  \newtheorem{thm}{Theorem}[section]
  \newtheorem{prop}[thm]{Proposition}
  \newtheorem{lemma}[thm]{Lemma}
   \newtheorem{cor}[thm]{Corollary}
\theoremstyle{definition}
  \newtheorem{remark}[thm]{Remark}
\newtheorem{example}[thm]{Example}
\newcommand{\Ne}{\mathcal{N}}
\newcommand{\Nb}{\overline{\Ne}}
\newcommand{\Be}{\mathcal{B}}
\newcommand{\Bb}{\overline{\Be}}
\newcommand{\Gx}{G_x}
\newcommand{\Esc}{\mathcal{E}}
\newcommand{\tho}{\tau}
\newcommand{\R}{\mathbb{R}}
\newcommand{\Rpo}{\R^+_{\geq 0}}
\newcommand{\C}{\mathbb{C}}
\newcommand{\eign}{\lambda_n}
\newcommand{\PPP}{\overline{\overline{\Pi}}}
\newcommand{\PP}{{\overline{\Pi}}}
\newcommand{\CBI}{${\rm{CBI}}(\psi,\phi)\:$}
\newcommand{\CBIb}{${\rm{CBI}}(\overline{\psi},\overline{\phi})\:$}
\newcommand{\CB}{${\rm{CB}}(\psi)\:$}
\newcommand{\Si}{\underline{\kappa}}
\renewcommand{\star}{*}
\newcommand{\piv}{\Phi_{\nu}}
\newcommand{\biv}{\overline{\Phi}_{\nu}}
\renewcommand{\liminf}{\varliminf}
\renewcommand{\limsup}{\varlimsup}
\begin{document}
\title[Smoothness of CBI semigroups]{Smoothness of continuous state branching  with immigration semigroups}
\setcounter{tocdepth}{1}

\author{M. Chazal}
\address{School of Operations Research and Information Engineering, Cornell University, Ithaca, NY 14853.}
\email{mc969@cornell.edu}

\author{R. Loeffen}
\address{School of Mathematics,
University of Manchester, Manchester M13 9PL, UK}
\email{ronnie.loeffen@manchester.ac.uk}

\author{P. Patie} \thanks{The authors are grateful to an anonymous referee for the careful reading of the manuscript and the insightful comments that helped improving the quality of the paper. This work was partially supported by the Actions de Recherche Concert\'ees  IAPAS, a fund of the Communaut\'ee francaise de Belgique}
\address{School of Operations Research and Information Engineering, Cornell University, Ithaca, NY 14853.}
\email{pp396@cornell.edu}

\begin{abstract}
In this work we develop an original and thorough  analysis of the (non)-smoothness properties of the  semigroups, and their heat kernels, associated to a large class of continuous state branching processes with immigration.  Our approach is based on an in-depth   analysis of the regularity of the absolutely continuous part of the invariant measure combined with  a substantial refinement of Ogura's  spectral expansion  of the transition kernels. In particular, we find new representations for the eigenfunctions and eigenmeasures that allow us   to derive  delicate uniform bounds that are useful for establishing the uniform convergence of the spectral representation of the semigroup acting on  linear spaces that we identify.    We detail several  examples which illustrate the variety of smoothness properties that CBI transition kernels may enjoy and also reveal that our results are sharp. Finally, our technique enables us to provide the (eventually) strong Feller property as well as   the rate of convergence to equilibrium in the total variation norm.
 \end{abstract}

\keywords{Continuous state branching processes with immigration, Bernstein functions, non-self-adjoint integro-differential operators, Laguerre polynomials, Markov semigroups, spectral theory
\\ \small\it 2010 Mathematical Subject Classification: 41A60, 47G20, 33C45, 47D07, 37A30}

\maketitle




\bibliographystyle{plain}

\section{Introduction and main results}
The objective of this paper is to develop an original approach to obtain detailed information regarding the representation and the regularity properties  of the solution to the parabolic evolution equation
\begin{equation} \label{eq:cauchy}
\frac{d}{dt}u_t - \left(\mathbf{D}+\mathbf{L}\right)u_t = 0, \: u_0 = f,\end{equation}
where, for a smooth function $f$ on $x>0$, \begin{equation} \label{eq:def_diff} \mathbf{D} f(x)=\sigma^2 x f''(x) + \left(\mathbf{b}-\mathbf{m}x\right) f'(x) -(qx+a)f(x)\end{equation} and
\begin{equation}
\mathbf{L} f(x) =  \int_0^{\infty}\left( f(x+y)-f(x)-yf'(x)\mathbb{I}_{\{y<1\}}\right) \mathbf{K}(x,dy),
\end{equation}
with  the parameters $\sigma^2,q,a\geq0$, $\mathbf{m},\mathbf{b} \in \R$ and the L\'evy kernel $ \mathbf{K}(x,dy) = x \mathbf{\Pi}(dy) + \mu(dy)$ defined in terms of the L\'evy measures $\mathbf{\Pi}$ and  $\mu$,      satisfying  some mild conditions that are detailed in \eqref{eq:def_psib}-\eqref{eq:defintion_theta} below.  It is already worth pointing out that our analysis includes the situations where $\overline{\mathbf{\Pi}}(0^+)=\int_0^{\infty}\mathbf{\Pi}(dy)=\infty$ and where $\mathbf{A}=\mathbf{D}+\mathbf{L}$ is purely non-local, that is $\mathbf{D}\equiv 0$.

The linear operator $\mathbf{A}$  turns out to be the generator of the Feller semigroup  of a continuous state branching process with immigration (for short CBI). There exists a rich and fascinating literature devoted to the study of fine analytical and stochastic properties of CBI-semigroups,  see e.g.~Handa \cite{CBI-Handa}, Stannat \cite{Stanna}, Li and Ma \cite{Li-Ma}, Foucart and Bravo \cite{Foucart-Bravo}, Duhalde et al.~\cite{Foucart_Ma}, Caballero et al.~\cite{Caballero-Lamperti}, Abraham and Delmas \cite{Delmas-CBI}, Lambert \cite{Lambert-07} and the monograph \cite{Li_Book}. 
However,  little seems to be known regarding the regularity properties of (at least) their transition kernels, 
something which may 
  be attributed to the fact that (non)-regularity properties of general non-local Markov semigroups and their heat kernels are fragmentally understood due to the lack of a comprehensive theory from both functional and stochastic analysis.\\
  In the framework of diffusions, that is, semigroups associated to differential operators, many theories have been successively designed to study this type of question. We mention for instance the techniques based  on Malliavin Calculus,  see e.g.~Hairer \cite{Hairer} for a recent and general account on these approaches, and, also analytical techniques based on H\"older estimates, see the monograph \cite{Lady-Sol-Ura-68} for a thorough account.

  Due to the generic role played by non-local  operators in the class of operators satisfying the maximum principle, see Courr\`ege \cite{Courr} for more details, the recent years have  witnessed   a fast growing literature devoted to the study of smoothness properties of the heat kernels or of the solution to parabolic problems associated to non-local Markov generators. 
The approaches can be split into three main categories.  \\
The first one is based on classical Fourier analysis which requires precise information regarding the asymptotic decays for large arguments of the Fourier transform of the semigroup in order to derive smoothness properties. This approach has been successfully developed by Hartman and Wintner \cite{Hartman-Wintner-Levy} for providing a sufficient condition  expressed in terms of their symbol for the continuity of the densities of L\'evy processes, see also  Knopova and Schilling \cite{Schilling_Smooth} and the references therein for more detailed and interesting results in this direction.
 It was also used recently in Filipovic et al.~\cite{FCBI}  to study smoothness properties of the transition distributions of affine processes. In particular for the CBI transition  kernels that we study,  the authors  obtain a smoothness property on the real line     in the case where there is a diffusion component, i.e.~$\sigma^2>0$ in \eqref{eq:def_diff} above.  This latter result reveals two noteworthy limitations of the use of Fourier analysis in the context of transition kernels whose supports are not the entire real line, namely the  positive real line for CBI kernels.  On the one hand, this technique does not provide the optimal regularity property on the support of kernels (or their derivatives) that do not vanish at one end of the support.  On the other hand, it also requires precise information regarding the asymptotic behavior of the (modulus) of the Fourier transform which are often difficult to get without restrictive assumptions. For these reasons  we are lead to  develop an original  approach that focuses  on the  smoothness property of the density (or its absolute continuous part) of the CBI kernels  within their supports, that is,  allowing possible explosions at $0$ for the kernels or their successive derivatives. \\
A second approach is  based on Malliavin calculus which has been extended under various conditions to study regularity properties of the  solutions to stochastic differential equations driven by processes with jumps. However, these developments,  in the context of Markov kernels, considered merely dynamics with either a L\'evy kernel that is  homogeneous in space or of finite intensity, and/or with a diffusion component, see e.g.~Picard \cite{Picard},  Cass \cite{Cass} or the original paper by Fournier \cite{Fournier} and the references therein. \\
Finally there are some substantial results of  analytical nature based on Harnack inequalities which have been obtained recently for getting H\"older continuity properties of the solution to parabolic equations involving integro-differential operators, see e.g.~Caffarelli and Silvestre  \cite{Silvestre_Caf}.
However all these techniques are not general enough to be
applied in our context. This is due to either a lack of symmetry and/or non-homogeneity
of the L\'evy kernel, or, unboundness of the drift coefficient, or, the possible absence of
diffusion part, or, simply, the non-local feature of the generators.

  We propose an alternative approach that stems on a combination of  an original methodology  that is developed to deal with the smoothness properties of distributions on $\R^+=(0,\infty)$  and  the spectral expansion of CBI-semigroups.  For the former, we show that the invariant measure of a CBI-semigroup is solution to a convolution equation that we study in depth to derive the regularity properties. We already emphasize that these developments could be used  in a larger context as, for instance, for the class of positive infinitely divisible laws. For the second part, we exploit and develop substantially a spectral expansion of the kernel of CBI-semigroups whose initial form was found by Ogura \cite{Ogura-70}. The novelty of this result lies on the lack of a spectral theorem for non-self-adjoint operator and Ogura overcomes this difficulty by first suggesting a candidate as an eigenvalues expansion for their transition kernel and by means of the Laplace transform techniques show that it indeed corresponds to the right one. These series expansions involve three spectral components, the eigenvalues which take in this case a simple form, the eigenfunctions and the eigenmeasures, that is, when these latter are absolutely continuous they correspond, in some sense, to the eigenfunctions of the adjoint semigroup. Unfortunately Ogura's representation of these spectral functions  does not allow one to study their regularity properties. Thus, we start by  providing new appropriate representations for the eigenfunctions as well as for  the eigenmeasures that are useful for deriving delicate uniform bounds and for investigating their smoothness properties.  This approach which  offers another view of Ogura's work also enables us to describe linear functional spaces for which the spectral expansion of the semigroup remains valid. This is  critical to determine regularity properties of the semigroup, that is formally the solution to the associated Cauchy problem \eqref{eq:cauchy}.

 Our new developments  on the spectral decomposition  of these semigoups also enable to apply some  transforms that are known to carry over  the spectral expansion. For instance, by means of a tensorization procedure, our results extend directly  to  any dimension $d \geq 2$. Furthermore this latter could be associated to the subordination   in the sense of Bochner in order to obtain similar results for a larger class of $d$-dimensional assymetric Markov processes with two-sided jumps. We shall not present the details of these standard arguments therein but we refer the interested reader to the excellent monograph \cite{Bakry_Book} for a description.


Now writing $\Rpo =[0,\infty)$, denoting by $B_b(\Rpo)$ the set of bounded Borel measurable functions on $\Rpo $ and by $C_0(\Rpo)$ the set of continuous functions on $\Rpo$ vanishing at infinity and  following Kawazu  and Watanabe \cite{Watanabe-CBI}, we say that $P = (P_t)_{t\geq 0}$ is a CBI-semigroup (resp. CB-semigroup)  if it is a non-negative strongly continuous contraction semigroup on  $C_0(\Rpo)$,
satisfying
\begin{equation}\label{eq:CBI-initial}
P_t(\Lambda_0) \subset \widetilde{\Lambda_0} \quad  \mbox{(resp.~} P_t(\Lambda_0) \subset \Lambda_0 \mbox{)}, \mbox{  for all } t\geq 0,
\end{equation}
where, writing $e_{\lambda}(\cdot)=e^{-\lambda \cdot}$, we have set
\begin{equation*}
\Lambda_0 =(e_{\lambda})_{\lambda \geq 0}, \quad
\widetilde{\Lambda_0} =(ce_{\lambda})_{c, \lambda \geq 0}.
\end{equation*}
From \cite{Watanabe-CBI}, it is well known that any CBI-semigroup $P$ is characterized, via its Laplace transform, by a unique couple of functions $(\psi, \phi)$ that are respectively called the branching and immigration mechanisms and we denote it by ${\rm CBI}(\psi, \phi)$ and write  simply ${\rm CB}(\psi)={\rm CBI}(\psi, 0)$.

We point out that \emph{the main results of this paper hold under the general conditions,  stated in \eqref{eq:def_psib}-\eqref{eq:defintion_theta} below, on the mechanisms $(\overline{\psi}, \overline{\phi})$, defined in terms of the parameters $\mathbf{b},\mathbf{m}, q,a$ and $\mathbf{\Pi}$ that appear in the expression of the generator in \eqref{eq:def_diff}.} However, for sake of presentation, we prefer to state first  these results   in a slightly more restrictive setting that we now describe.  Note that Proposition \ref{prop:transf}\eqref{it:transf_theta} explains that the semigroups under the  two sets of conditions are related by a Doob h-transform which enable to  readily transfer the properties from one semigroup to the h-transformed one.

We will denote by  $\Ne$ the set of functions $\psi:\Rpo \mapsto \Rpo$ defined, for $u \geq 0$, by
 \begin{equation} \label{eq:def_psi}
\psi(u) =\sigma^2 u^2 + m u+ \int_0^\infty\left(e^{-u r}-1+ur \right)\Pi(dr),
 \end{equation}
 where $\sigma \geq 0$, $m>0$  and  $\Pi$  is a non-negative Borel measure  on
 $\R^+=(0,\infty)$
 , and, which satisfy  the following two conditions
\begin{eqnarray}
 \int^\infty \frac{du }{\psi(u)}<\infty \label{eq:standing_assumption}
 \mbox{ and } \psi \in \mathcal{H}(R_\psi) \textrm{ with } R_\psi>0,  
\end{eqnarray}
where throughout $\mathcal{H}(R)$ (resp.~$\mathcal{H}_{(a,b)}$) is the set of functions holomorphic on the open disk $D(0,R)$  (resp.~on the strip $a<\Re(z)<b$), where we understand that $R$ is the radius of convergence of their Taylor series at $0$.  Note that the second requirement in \eqref{eq:standing_assumption} implies that  $\int_0^\infty (r \wedge r^2) \Pi(dr) < \infty$ and  that the integral in \eqref{eq:def_psi} is well-defined. Of course, here and below, we mean that $\psi$ as defined in \eqref{eq:def_psi} extends to a holomorphic function and we keep the same notation for its extension.
Note that, by Sato \cite[Theorem 25.17]{Sato-99}, $\psi \in \mathcal{H}(R_\psi)$ is equivalent to assume that $\psi$ is holomorphic on the half-plane $\Re(z) > -R_\psi$. This standard equivalence also holds for Laplace transforms.
Next,  we denote by $\Be$ the set of Bernstein functions on $\Rpo$, that is, the functions $\phi:\Rpo \mapsto \Rpo$ such that
 \begin{equation}\label{eq:def_Bernstein}
\phi(u) =  b u  + \int_0^\infty \left(1-e^{-u r} \right) \mu(dr)=u \left( b+ \int_0^{\infty}e^{-ur}\overline{\mu}(r)dr\right)
 \end{equation}
 where   $b \geq 0$ and  $\mu$  is a non-negative Borel measure  on
 $\R^+$ satisfying $ \int_0^\infty (1 \wedge r) \mu(dr) < \infty$ and such that
  \begin{eqnarray}
  \phi \in \mathcal{H}(R_\phi) \textrm{ with }  R_\phi>0.  \label{Analicity_of_the_mechanismsp}
 \end{eqnarray}
Here in \eqref{eq:def_Bernstein} we have set $\overline{\mu}(r)=\int_r^{\infty}\mu(dy)$, for all $r>0$. 
Note that for any $\psi \in \Ne$, we have
\begin{eqnarray} \label{eq:whl}
\psi(u) &=& u\phi_p(u) = u\left(\sigma^2 u + m + \int_0^{\infty}(1-e^{-ur})\overline{\Pi}(r)dr\right)\\
&=& u\left(\sigma^2 u + m + u \int_0^{\infty}e^{-ur}\PPP(r)dr\right), \nonumber
\end{eqnarray}
where $\PP(y)=\int_y^{\infty}\Pi(dr)$,  $\PPP(y)=\int_y^{\infty}\PP(r)dr$ and  $\phi_p \in \Be$ is the descending ladder height exponent.
Under these conditions, the  ${\rm CBI}(\psi, \phi)$ semigroup is conservative and subcritical with
 \begin{equation}\label{theta_positive}
 \psi(0)=0,   {\psi}^{(1)}(0)=m  >0,   \int_{\lambda }^{\infty} \frac{du}{ \psi(u)} < \infty \mbox { for all } \lambda >0 \mbox{ and }
\int_{0}^{\infty} \frac{du}{ \psi(u)} =\infty.
 \end{equation}
 where  we have used the notation $f^{(\mathfrak{p})}(x)=\frac{d^\mathfrak{p}}{dx^\mathfrak{p}} f(x)$
 for the $\mathfrak{p}$-th derivative of $f$, for some integer $\mathfrak{p}$.
 We refer to e.g.~\cite[Theorem 3.8]{Li_Book} for a proof of \eqref{theta_positive}. We also  point out that, in fact, it is easy to check that the condition ${\psi}^{(1)}(0)=m>0$ yields that $\int_{0}^{\infty} \frac{du}{ \psi(u)} =\infty$. From these considerations, we deduce that the mapping
 \begin{equation}\label{def_A}
 \lambda \mapsto A(\lambda) = \exp\left(-m\int_\lambda^\infty \frac{du }{\psi(u)}\right) 
 \end{equation}
 is an increasing bijection from  $\Rpo$ to $[0,1)$ with  inverse function denoted by $B$, i.e.~$B: [0, 1)\longmapsto \Rpo$ satisfies
\[
   \exp \left( -m\int_{B(z)}^{\infty} \frac{du }{ \psi(u)} \right) =z. \]
In addition, under the  assumptions \eqref{eq:standing_assumption} and \eqref{Analicity_of_the_mechanismsp},
and writing
\begin{equation} \label{eq:def_Phi}
 \piv(\lambda) = 	\int_0^{\lambda} \frac{\phi(u)}{\psi(u)}du,
 \end{equation}
we shall show in Lemma \ref{Analicity_of_A_B_F} below that, there exists some $0< R_0 \leq 1$ such that, for all $x\geq 0$,  the function
\begin{equation}\label{eq:def_Gx}
z \longmapsto \Gx(z) = e^{ -x B(z)+\piv(B(z))} \in \mathcal{H}(R_0).
\end{equation}
  Then, with this notation, we have the following representation of the Laplace transform of the Feller semigroup $P$ which is due to Ogura \cite[Proposition 1.2]{Ogura-70} and valid for any $t,x,\lambda\geq 0$,
\begin{equation} \label{eq:lt_cbi}
	P_te_\lambda (x) = e^{-\piv(\lambda)}\Gx(A(\lambda)e^{-mt}).
\end{equation}
Next, we set
\begin{equation}\label{def:t0}
T_0=-\frac{\ln(R_0)}{m }
\end{equation}
and we denote by $W$ the so-called scale function associated to the spectrally negative L\'evy process whose law is determined by its Laplace exponent $\psi$. More precisely, the function $W: \Rpo \rightarrow \Rpo$ is characterized by its Laplace transform as follows, for any $u > 0$,
\begin{equation}\label{eq:def_W}
 \psi(u)\int_0^\infty e^{-u y} W(y)dy = 1.
 \end{equation}
We shall check, in  Lemma \ref{eq:propW} below, that the function $W$  satisfies $W(0)=0$ and is in $C^1(\R^+)$ with the derivative being positive on $\mathbb R^+$. With $b$ the drift parameter and $\mu$ the L\'evy measure associated to $\phi$, see \eqref{eq:def_Bernstein} and recalling $\overline{\mu}(r) =\int_r^\infty\mu(dy)$  we define, for all $y>0$,
\begin{equation} \label{eq:def_k}
\kappa(y) =
bW^{(1)}(y) + \int_0^yW^{(1)}(y-r)\overline{\mu}(r)dr,  
\end{equation}
and, we set $\underline{\kappa}(0^+)=\liminf_{y\to 0} \kappa(y) $,   $\overline{\kappa}(0^+)=\limsup_{y\to 0} \kappa(y) $ and we define the integer $\Si$ by
\begin{equation}\label{def:kappa_bar}\Si=
\lceil \underline{\kappa}(0^+) \rceil -1,
    \end{equation}
where $\lceil .\rceil$ is the ceiling function and we understand that $\lceil\infty\rceil=\infty$.
Note that when $\phi \equiv 0$, i.e.~$P$ is a CB($\psi$), obviously $\kappa \equiv 0$.
Next, we denote 
for $t>T_0$,
\begin{equation}\label{eq:defBtbar_and_H}
\bar\lambda_t =
\begin{cases}
\min\left(- B \left( 2-e^{m(t-T_0)} \right), R_\phi\right) & \text{if $T_0<t<T_0+\frac 1m \ln \left( 2-A(-R_A) \right)$}, \\
R_A\wedge R_\phi & \text{if $t\geq T_0+\frac 1m \ln \left( 2-A(-R_A) \right)$},
\end{cases} 
\end{equation}
where $R_A$ is the radius of convergence of the Taylor series at $0$ of $A$. Note that in Lemma \ref{Analicity_of_A_B_F} it is shown that $A \in \mathcal{H}(R_A)$ with $R_A>0$   and thus $B$, the inverse of $A$, is well-defined for $z>A(-R_A)$.
Writing  $f e_{\lambda} (x)  = f(x) e^{-\lambda x}, \: x\geq 0$, we set
\begin{equation}\label{eq:defBtbar_and_Ht}
 \mathcal{D}_t = \{ f:\Rpo\to\mathbb R \ \mbox{ measurable}; \: f e_{\lambda}  \in L^{\infty}(\R^+) \ \text{for some $\lambda<\bar{\lambda}_t$} \},
\end{equation}
that is, $f \in \mathcal{D}_t$ if there exists $C>0$ and $\lambda<\bar{\lambda}_t$ such that  $\vert f(x)\vert \leq C e^{ \lambda x}$, for almost every
(a.e.) $x\geq 0$. Plainly $\mathcal{D}_t$ is a linear space and since $B(0)=0$, $\mathcal D_t$ contains the set of bounded measurable functions if $t>T_0+\frac{\ln 2}{m}$.
We  write
\begin{equation*}
\begin{split}
L^1_{loc}(\R^+) = & \{f:\R^+ \to\mathbb R \mbox{ measurable}; \: \mbox{ for any } a >0, \: \int_0^a \vert f(y) \vert dy < \infty
\}, \\
L^1(\R^+) = & \{f:\R^+\to\mathbb R \mbox{ measurable}; \:  \int_0^\infty \vert f(y) \vert dy < \infty
\}.
\end{split}
\end{equation*}
Further, for $E\subseteq \R$,   $C(E)$,  respectively $C^p(E)$ for $p =1, 2, \cdots, \infty $   stand for the space of continuous, respectively $p$ times  continuously differentiable functions on $E$. Similarly, for any $E_i\subseteq \R$, $i=1,2,3$, $C^{\infty^2,k}(E_1\times E_2 \times E_3)$  
 denote the space of infinitely continuously differentiable functions with respect to the two first variables and $k$ times with respect to third one on $E_1\times E_2 \times E_3$. 
We also denote by $C_b(\Rpo)$, the set of bounded continuous functions on $\Rpo$ and we set \begin{equation*}
\Lambda =\textrm{Span}(e_{\lambda})_{\lambda > 0},
\end{equation*}
where $\textrm{Span}$ of a set stands for its linear hull. Note that, by the Stone-Weierstrass theorem, $\Lambda$ is dense in $C_0(\Rpo)$.
Finally, for all $t,x\geq 0$, we denote by $P_t(x,dy)$ the transition kernel of the CBI-semigroup $P$ and by $\delta_a(d y)$ the dirac measure at $a$.
We are now ready to state the first main result of this paper.

\begin{thm}\label{thm:regularity}
  Let $P$ be a CBI$( \psi,   \phi)$ semigroup with $(\psi,\phi) \in \Ne \times \Be$.  Then, for all $f \in \mathcal{D}_t \cup \Lambda$, $z\mapsto P_zf \in \mathcal{H}_{(T_0,\infty)}$.
   Moreover,    for any $t>T_0$, the following hold.
 	 	\begin{enumerate}
      \item \label{it:Cinfty}	\label{it:sci} $P_tf \in C_b(\Rpo) \cap C^\infty(\Rpo)$ for all $f \in \mathcal{D}_t \cup \Lambda$.
      \item \label{it:CinftyC0} $P_tf \in C_0(\Rpo) \cap C^\infty(\Rpo)$ for all $f \in (\mathcal{D}_t \cap C_0(\Rpo))\cup \Lambda$.
     \item \label{it:sf} $ P_tf \in C_b(\Rpo)$ for all $f \in B_b(\Rpo)$, that is, $P$ is (eventually) strong Feller.
 \item \label{it:sh} For all $x\geq 0$, there exists  a function  $y\mapsto p_t(x,y)$ such that
 \begin{equation}\label{eq:deck}
 P_t(x, dy) = e^{-\biv }G_x(e^{-mt}) \delta_0(dy) + p_t(x,y)dy, \quad y\in\mathbb R.\end{equation}
 Note that one can take $p_t(x,y)=0$ for all $y<0$.
 Thus, for all $x \geq 0$, $ P_t(x, dy)$ is  absolutely continuous if and only if
  		\begin{equation} \label{absolute_continuity_condition0}
  	 \biv = \lim_{\lambda \to \infty} \piv(\lambda) = \infty,
  		\end{equation}
  which holds if $\Si>0$. Moreover, in any case,
 \begin{enumerate}[a)]
 \item  \label{it:4a} if  $\Si\geq 1$, then $(t, x, y)\longmapsto p_t(x,y)$ is $C^{\infty^2,\Si-1}((T_0, \infty )\times \mathbb R^+\times \boldsymbol{\mathbb{R}})$,
 \item \label{it:defq}  $(t, x, y)\longmapsto p_t(x,y) \in C^{\infty^2, \Si+\bar{\mathfrak{q}}}((T_0, \infty )\times \mathbb R^+\times \mathbb R^+)$ where
 \begin{enumerate}[b1)]
  \item\label{it:mainthm_qis0}  $\bar{\mathfrak{q}}=0$ if either $\Si\geq 1$ or $\Si=0$  and $\overline{\kappa}(0^+)<\infty$,
  \item  \label{it:b2} $\bar{\mathfrak{q}} = \sup\{q\geq 1;\: \kappa, W \in C^{{\mathfrak{q}}}(\R^+) \}$ if $\kappa \in C^{1}(\R^+)$, $\kappa^{(1)}\in L^1_{loc}(\R^+)$  and $\underline{\kappa}(0^+)=\overline{\kappa}(0^+)$.
 \end{enumerate}
 \end{enumerate}
 \end{enumerate}
 \end{thm}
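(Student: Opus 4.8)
The plan is to convert the Laplace-transform identity \eqref{eq:lt_cbi} into an explicit spectral expansion and then control it term by term. Since $\Gx\in\mathcal{H}(R_0)$, I would first expand $\Gx(z)=\sum_{n\geq 0}\mathcal{P}_n(x)z^n$ on $|z|<R_0$ and define the eigenfunctions by the Cauchy integrals $\mathcal{P}_n(x)=\frac{1}{2\pi i}\oint_{|z|=r}\Gx(z)z^{-n-1}\,\d z$ for $r<R_0$. Because $(x,z)\mapsto\Gx(z)=e^{-xB(z)+\piv(B(z))}$ is entire in $x$ for each fixed $z$, each $\mathcal{P}_n$ is entire in $x$, with $\mathcal{P}_0\equiv 1$ (as $B(0)=0$ and $\piv(0)=0$). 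Substituting $z=A(\lambda)e^{-mt}$, which satisfies $A(\lambda)e^{-mt}<R_0$ for $\lambda\geq 0$ and $t>T_0$ (since $A(\lambda)<1$ and $e^{-mT_0}=R_0$), turns \eqref{eq:lt_cbi} into the display below, identifying the eigenvalues $-nm$ and the eigenmeasures $\nu_n$ through $\widehat{\nu}_n(\lambda)=e^{-\piv(\lambda)}A(\lambda)^n$:
\[
P_te_\lambda(x)=\sum_{n\geq 0}e^{-nmt}\,\mathcal{P}_n(x)\,\widehat{\nu}_n(\lambda).
\]
By linearity this gives the spectral expansion and, via the composition $z\mapsto\Gx(A(\lambda)e^{-mz})$ of holomorphic maps on $\{\Re(z)>T_0\}$, the holomorphy statement on $\Lambda$ directly. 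I would extend both to $f\in\mathcal{D}_t$ by writing $P_tf(x)=\sum_{n\geq 0}e^{-nmt}\mathcal{P}_n(x)\,\nu_n(f)$, the interchange being justified by the estimates of the next step.

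The heart of the argument is the simultaneous control of both spectral ingredients so that this series, together with all its $x$-derivatives and its continuation in $t$ to complex $z$, converges uniformly. From $\partial_x\Gx(z)=-B(z)\Gx(z)$ and Cauchy estimates on $|z|=r<R_0$ one gets, for every $k$, uniform bounds $\sup_{x\geq 0}|\mathcal{P}_n^{(k)}(x)|\lesssim C_r^{\,k}r^{-n}$; on the other side $\nu_n(f)$ for $f\in\mathcal{D}_t$ is controlled by the exponential moment $\widehat{\nu}_n(-\lambda)=e^{-\piv(-\lambda)}|A(-\lambda)|^n$ with $\lambda<\bar\lambda_t$. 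The threshold $\bar\lambda_t$, characterised by $A(-\bar\lambda_t)=2-e^{m(t-T_0)}$ (capped at $R_A\wedge R_\phi$), is exactly the point at which $A(-\lambda)e^{-mt}$ still lies in the domain of $\Gx$ and $\piv(-\lambda)$ remains finite; a naive geometric bound is not sharp enough to reach it, so the refined uniform bounds that track the true nearest singularity of $\Gx$ are needed here. Combining the two estimates makes $\sum_n e^{-nmz}\mathcal{P}_n^{(k)}(x)\nu_n(f)$ converge uniformly in $x\geq 0$ and locally uniformly in $z$ on $\{\Re(z)>T_0\}$; Morera's theorem then yields $z\mapsto P_zf\in\mathcal{H}_{(T_0,\infty)}$, term-by-term differentiation yields $P_tf\in C^\infty(\Rpo)$, the $n=0$ term with the uniform bound gives $P_tf\in C_b(\Rpo)$ (part \eqref{it:sci}), and for $f\in C_0$ the decay of the remaining terms gives $P_tf\in C_0(\Rpo)$ (part \eqref{it:CinftyC0}).

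For the kernel I would work from \eqref{eq:lt_cbi} directly. Letting $\lambda\to\infty$, we have $A(\lambda)\to 1$ and $e^{-\piv(\lambda)}\to e^{-\biv}$, so $\lim_{\lambda\to\infty}\int_0^\infty e^{-\lambda y}P_t(x,\d y)=e^{-\biv}\Gx(e^{-mt})$ isolates the atom at $0$ and gives the decomposition \eqref{eq:deck}; hence $P_t(x,\d y)$ is absolutely continuous iff $\biv=\infty$. The implication $\Si>0\Rightarrow\biv=\infty$ follows from $\widehat{\kappa}(u)=\phi(u)/\psi(u)=\piv'(u)$ — a consequence of $\widehat{W^{(1)}}(u)=u/\psi(u)$ (using $W(0)=0$) and \eqref{eq:def_Bernstein} — which gives $\biv=\int_0^\infty\kappa(y)y^{-1}\,\d y$, an integral forced to diverge at $0$ once $\underline{\kappa}(0^+)>1$. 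Inverting the absolutely continuous part term by term produces $p_t(x,y)=\sum_n e^{-nmt}\mathcal{P}_n(x)\rho_n(y)$ with $\rho_n$ the densities of the $\nu_n$: its $C^{\infty^2}$-regularity in $(t,x)$ is inherited from the uniform convergence above, while the order of $y$-smoothness is that of the $\rho_n$. For the latter I would exploit that the invariant density solves the convolution equation $y\rho_0(y)=(\kappa\star\rho_0)(y)$ (read off from $\widehat{\nu}_0{}'=-\widehat{\kappa}\,\widehat{\nu}_0$), and the analogous equations for the $\rho_n$, so that the number of continuous $y$-derivatives is dictated by the local behaviour of $\kappa$ at $0$: this produces order $\Si-1$ across $\R$ in part \eqref{it:4a} (the density and its first $\Si-1$ derivatives vanishing at $0^+$, so the extension by $0$ is $C^{\Si-1}(\R)$) and the improved order $\Si+\bar{\mathfrak{q}}$ on $\R^+$ in part \eqref{it:defq}, with $\bar{\mathfrak{q}}$ governed by the joint smoothness of $\kappa$ and $W$. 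Finally, the strong Feller property \eqref{it:sf} is immediate for $t>T_0+\frac{\ln 2}{m}$, since then $B_b(\Rpo)\subset\mathcal{D}_t$ and part \eqref{it:sci} applies; for $T_0<t\leq T_0+\frac{\ln 2}{m}$ I would instead use \eqref{eq:deck}, the continuity of $x\mapsto\Gx(e^{-mt})$ and dominated convergence in $x\mapsto\int p_t(x,y)f(y)\,\d y$ (via the uniform bounds on $\mathcal{P}_n$) to conclude $P_tf\in C_b(\Rpo)$.

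The main obstacle is the uniform control in the second paragraph: producing eigenfunction-derivative bounds and matching exponential-moment bounds on the eigenmeasures that are sharp enough to identify $\bar\lambda_t$ as the exact convergence threshold on $\mathcal{D}_t$, and, for the kernel, extracting the precise differentiability order of the eigenmeasure densities from the convolution equation — that is, reading the integer $\Si$ and the refinement $\bar{\mathfrak{q}}$ off the behaviour of $\kappa$ and $W$ near the origin. Once these estimates are in place, the time-analyticity, the $C^\infty$-smoothness in $x$, and the strong Feller property follow essentially formally.
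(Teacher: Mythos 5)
Your overall architecture is the paper's own: eigenfunctions as Cauchy integrals of $G_x$, eigenmeasures identified through $e^{-\piv(\lambda)}A(\lambda)^n$, identification of the expansion on $\Lambda$, extension to $\mathcal{D}_t$, the atom isolated by letting $\lambda\to\infty$, and $y$-regularity via convolution equations driven by $\kappa$ and $W$. However, two steps have genuine gaps. First, the bound you invoke to extend the expansion to $\mathcal{D}_t$ is the wrong object: you control $\nu_n(f)$ by $|\widehat{\nu}_n(-\lambda)|=e^{-\piv(-\lambda)}|A(-\lambda)|^n$, the modulus of the \emph{signed} Laplace transform. Since the $\mathcal{V}_n$ are signed measures, for a general $f$ with $|f|\leq Ce^{\lambda y}$ one only has $|\mathcal{V}_nf|\leq C\int_0^\infty e^{\lambda y}|\mathcal{V}_n|(dy)$, so it is the total variation that must be estimated. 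The correct bound --- and the only one whose form actually reproduces the threshold you state, $A(-\bar\lambda_t)=2-e^{m(t-T_0)}$, with its tell-tale factor $2-{}$ --- is $\int_0^\infty e^{\lambda y}|\mathcal{V}_n|(dy)\leq e^{-\piv(-\lambda)}\left(2-A(-\lambda)\right)^n$, see Proposition \ref{prop:nu_n}. Proving it requires an explicit construction of $\mathcal{V}_n$, which your proposal never supplies: the paper writes $\nu_n=e^{-\biv}\mathcal{W}_n+\mathcal{W}_n\star\nu+\nu$ with $\mathcal{W}_n=\sum_{j=1}^n\binom{n}{j}(-1)^j\omega^{*j}$ and $\int_0^\infty e^{-\lambda y}\omega(y)dy=1-A(\lambda)$, and the binomial structure delivers the $(2-A)^n$ bound. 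The same representation, with the $n$-uniform bounds of order $n^{l+1}(2-A(\lambda))^n$ on $\mathcal{W}_n^{(l)}$ and $\nu_n^{(l)}$ (Lemma \ref{lem_smooth_mathcalW}, Proposition \ref{prop:nu_n}), is what licenses term-by-term differentiation in $y$ in part \eqref{it:sh}; your appeal to ``the analogous convolution equations for the $\rho_n$'' is the right instinct, but without quantitative control in $n$ the series \eqref{eq:derivatives_of_the_density_in_y} cannot be summed.

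Second, your strong Feller argument fails in the critical regime $T_0<t\leq T_0+\frac{\ln 2}{m}$. Any dominated-convergence majorant routed through the spectral series costs $\int_0^\infty|\nu_n(y)|dy$, which is of order $(2-A(0))^n=2^n$, so the majorant $\sum_n e^{-nmt}R^{-n}2^n$ diverges for exactly this range of $t$, however close $R<R_0$ is taken to $R_0$; this is the very obstruction that keeps $B_b(\Rpo)$ outside $\mathcal{D}_t$, so no rearrangement of the same bounds removes it. The paper's proof of part \eqref{it:sf} uses a genuinely different, non-spectral idea: take non-negative $g_n\in C_c(\Rpo)$ increasing pointwise to $1$ (compactly supported bounded functions lie in $\mathcal{D}_t$ for \emph{every} $t>T_0$, even when $\bar\lambda_t<0$); note that $P_tg_n\in C(\Rpo)$ by part \eqref{it:sci} while $P_t1\in C(\Rpo)$ by \eqref{eq:CBI-initial}; upgrade the monotone pointwise convergence $P_tg_n\to P_t1$ to locally uniform convergence by Dini's theorem; and then split $f=fg_{n_0}+f(1-g_{n_0})$, the first piece handled by part \eqref{it:sci} and the second bounded by $\|f\|_\infty P_t(1-g_{n_0})$, locally uniformly small. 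Without this (or some substitute) your argument only establishes part \eqref{it:sf} for $t>T_0+\frac{\ln 2}{m}$.
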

 \begin{remark} \label{rem:main}
 Note that in  
   Theorem \ref{thm:regularity}\eqref{it:b2} the condition $\kappa \in C^{1}(\R^+)$ ensures that in this case $\bar{\mathfrak{q}} \geq 1$ as we shall prove that, in our setting, $W \in C^{1}(\R^+)$, see Lemma \ref{eq:propW} below. \end{remark}
   \begin{remark}
   We also point out that in a recent paper Li and Ma \cite{Li-Ma}  have shown  by means of an elegant coupling argument the strong Feller property of CBI semigroups satisfying the first condition in \eqref{eq:standing_assumption} and having a linear immigration, i.e.~$\phi(u)=bu$.
 \end{remark}
Another interesting by-product of our analysis is the following precise estimate regarding the speed of convergence to stationarity in the total variation norm, which we recall to be defined for a signed measure $\mu$ on $\Rpo$ by $|| \mu ||_{{\rm{TV}}} = \sup_{E \in \mathcal{B}(\Rpo)} |\mu(E)| $, with $\mathcal{B}(\Rpo)$ the set of Borelians of $\Rpo$.

\begin{prop}\label{cor:exp-ergodicity} Let  $ P$ be a \CBI semigroup  with $(\psi,  \phi) \in \Ne \times \Be$, then  $P$ admits a unique  invariant probability measure $\mathcal V$ on $\Rpo$. Moreover $P$ is exponentially ergodic, in the sense that  there exist  $C>0$ and $\overline{B} >0$ such that,  for any $x\geq 0$ and $t>\underline{T}=T_0 + \frac{1}{m} \ln (2-A(-R_A))$, we have
\begin{equation}\label{eq:total-variation}
||  P^{\mathcal{V}}_t(x)||_{{\rm{TV}}} \leq Ce^{\overline{B} x} 
\frac{e^{-m(t-\underline{T})}}{1- e^{-m(t-\underline{T})}}
\end{equation}
where we have set $P^{\mathcal{V}}_t(x)(.)= P_t(x, .) -{\mathcal V}(.)$.
\end{prop}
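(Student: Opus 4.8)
The plan is to read the invariant law directly off the Laplace transform \eqref{eq:lt_cbi} and then to estimate the total variation distance term-by-term through the Taylor expansion of $\Gx$ at the origin, which is exactly the analytic mechanism underlying the spectral representation already exploited for \refthm{thm:regularity}.

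First I would identify the candidate $\mathcal V$. Since $A(\lambda)\in[0,1)$ for $\lambda\ge 0$ and since $B(0)=0$, $\piv(0)=0$ force $\Gx(0)=1$, letting $t\to\infty$ in \eqref{eq:lt_cbi} yields $P_te_\lambda(x)\to e^{-\piv(\lambda)}$ for every $x,\lambda\ge 0$. As $\piv$ is continuous with $\piv(0)=0$, the limit $\lambda\mapsto e^{-\piv(\lambda)}$ is the Laplace transform of a probability measure $\mathcal V$ on $\Rpo$, whence $P_t(x,\cdot)\Rightarrow \mathcal V$ by the continuity theorem. Invariance is then checked analytically: from $\Gx(z)=e^{-xB(z)+\piv(B(z))}$ and $\int_0^\infty e^{-ux}\mathcal V(dx)=e^{-\piv(u)}$ one obtains, for every real $z\in[0,1)$, the identity $\int_0^\infty \Gx(z)\,\mathcal V(dx)=1$; taking $z=A(\lambda)e^{-mt}$ gives $\int_0^\infty P_te_\lambda\,d\mathcal V=e^{-\piv(\lambda)}=\int_0^\infty e_\lambda\,d\mathcal V$, and uniqueness follows since any invariant probability $\pi$ must satisfy $\int e_\lambda\,d\pi=e^{-\piv(\lambda)}$ by the same $t\to\infty$ limit.

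Next I would expand and subtract. Writing $\Gx(z)=\sum_{n\ge 0}g_n(x)z^n$ on $D(0,R_0)$, as granted by \eqref{eq:def_Gx}, and substituting $z=A(\lambda)e^{-mt}$ into \eqref{eq:lt_cbi} produces
\begin{equation*}
P_te_\lambda(x)=\sum_{n\ge 0}e^{-mnt}\,g_n(x)\,e^{-\piv(\lambda)}A(\lambda)^n,
\end{equation*}
which displays the eigenvalues $mn$, the eigenfunctions $g_n$ with $g_0\equiv 1$, and the signed eigenmeasures $\nu_n$ fixed by $\int_0^\infty e^{-\lambda y}\nu_n(dy)=e^{-\piv(\lambda)}A(\lambda)^n$; the $n=0$ term is exactly $\mathcal V$. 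Subtracting it leaves
\begin{equation*}
P^{\mathcal V}_t(x)(dy)=\sum_{n\ge 1}e^{-mnt}\,g_n(x)\,\nu_n(dy),
\end{equation*}
so that $\|P^{\mathcal V}_t(x)\|_{{\rm{TV}}}\le\sum_{n\ge 1}e^{-mnt}|g_n(x)|\,\|\nu_n\|_{{\rm{TV}}}$ by the triangle inequality.

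Finally I would bound the two factors and sum. Since $A(-R_A)<A(0)=0$, we have $2-A(-R_A)>1$, so the radius $e^{-m\underline{T}}=R_0/(2-A(-R_A))$ lies strictly inside $D(0,R_0)$, and Cauchy's estimate on the circle $|z|=e^{-m\underline{T}}$ gives $|g_n(x)|\le M_x\,e^{mn\underline{T}}$ with $M_x=\sup_{|z|=e^{-m\underline{T}}}|\Gx(z)|$; the explicit form $\Gx(z)=e^{-xB(z)+\piv(B(z))}$ then yields $M_x\le Ce^{\overline B x}$ with $\overline B=\sup_{|z|=e^{-m\underline{T}}}|\Re B(z)|$. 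Granting the uniform control $\|\nu_n\|_{{\rm{TV}}}\le C'$, summing the geometric series for $t>\underline{T}$ gives
\begin{equation*}
\|P^{\mathcal V}_t(x)\|_{{\rm{TV}}}\le CC'e^{\overline B x}\sum_{n\ge 1}e^{-mn(t-\underline{T})}=CC'e^{\overline B x}\,\frac{e^{-m(t-\underline{T})}}{1-e^{-m(t-\underline{T})}},
\end{equation*}
which is the asserted bound. I expect the crux to be precisely this uniform estimate $\|\nu_n\|_{{\rm{TV}}}\le C'$ on the signed eigenmeasures, together with the justification that the series converges in total variation and not merely in the sense of Laplace transforms; here one must control the absolutely continuous parts of the $\nu_n$ in $L^1(\R^+)$ uniformly in $n$ and handle the common atom at $0$, which is exactly what the fine representations and delicate uniform bounds for the eigenmeasures developed for \refthm{thm:regularity} are designed to deliver.
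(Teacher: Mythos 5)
Your skeleton is the paper's: peel off the $n=0$ term of the spectral expansion (Theorem \ref{thm:spectral-expansion} applied to $f=\mathbb{I}_E\in\mathcal D_t$, which is legitimate because $\bar\lambda_t=R_A\wedge R_\phi>0$ for $t>\underline T$), bound the eigenfunctions by a Cauchy estimate and the eigenmeasures in total variation, then sum a geometric series. The genuine gap is the step you explicitly ``grant'': the uniform bound $\|\nu_n\|_{{\rm{TV}}}\le C'$. This is not what the paper's eigenmeasure estimates deliver, and it cannot be extracted from them: Proposition \ref{prop:nu_n} gives
\begin{equation*}
\vert\mathcal V_n\vert e_{\lambda}\le e^{-\piv(\lambda)}\bigl(2-A(\lambda)\bigr)^{n},\qquad \lambda>-(R_A\wedge R_\phi),
\end{equation*}
and since $A(\lambda)<1$ for every finite $\lambda$, the base $2-A(\lambda)$ is strictly larger than $1$ for every admissible $\lambda$ (at $\lambda=0$ the bound is $2^n$), so no choice of $\lambda$ makes it uniform in $n$. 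Worse, uniform boundedness should not be expected to hold in general: by \eqref{eq:lt_nu_n_proof} one has $\mathcal V_n=(\delta_0-\omega)^{*n}*\mathcal V$, so the total variation of $\mathcal V_n$ dominates, up to a factor $2$, $\sup_{\xi\in\R}\vert A(i\xi)\vert^{n}\vert e^{-\piv(i\xi)}\vert$, and $\vert A(i\xi)\vert=\vert 1-\widehat{\omega}(i\xi)\vert$ exceeds $1$ whenever $\Re\,\widehat{\omega}(i\xi)<0$, which nothing excludes for general mechanisms; only in essentially self-adjoint situations (e.g.\ the CIR/Laguerre case, where Cauchy--Schwarz in $L^2(\mathcal V)$ gives $\int_0^\infty e^{-y}\vert \mathrm{L}_n(y)\vert dy\le 1$) is your assumption available. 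This is precisely a manifestation of the non-self-adjointness emphasized in the paper's remarks on the spectral expansion.

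The paper's proof is engineered around this obstruction by re-balancing the two estimates rather than asking more of the eigenmeasures. It keeps the exponentially growing factor: for $0<\lambda<R_A\wedge R_\phi$ and $\mathbb{I}_E\le e_{-\lambda}$ it bounds $\vert\mathcal V_n(E)\vert\le e^{-\piv(-\lambda)}(2-A(-\lambda))^n$, and it compensates by taking the Cauchy estimate \eqref{eq:bpol} on the circle of radius $R=e^{-m\underline T}(2-A(-\lambda))$ rather than your $e^{-m\underline T}$; the definition of $\underline T$ is exactly what guarantees $R<R_0$, so \eqref{eq:bpol} applies and yields $\vert\mathcal L_n(x)\vert\le Ce^{\overline Bx}R^{-n-1}$. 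The two exponentials then cancel,
\begin{equation*}
e^{-mnt}\,\vert\mathcal L_n(x)\vert\,\vert\mathcal V_n(E)\vert
\le \frac{Ce^{\overline Bx}e^{-\piv(-\lambda)}}{R}\left(\frac{e^{-mt}\bigl(2-A(-\lambda)\bigr)}{R}\right)^{n}
=\frac{Ce^{\overline Bx}e^{-\piv(-\lambda)}}{R}\,e^{-mn(t-\underline T)},
\end{equation*}
and summing over $n\ge 1$ gives \eqref{eq:total-variation}. In short, your Cauchy estimate is too weak by exactly the factor $(2-A(-\lambda))^n$ that your assumed (unavailable) eigenmeasure bound was supposed to remove; with the paper's choice of radius, the rest of your argument goes through verbatim.
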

\begin{remark}
We point out that  the exponential ergocity of CBI semigroups have been studied recently under various restrictive conditions. For instance, Li and Ma \cite{Li-Ma} (resp.~Jin and al.~\cite{Jin-Rudiger}) proved this fact by means of a coupling argument (resp.~a Forster-Lyapunov function argument) when the immigration mechanism is linear, i.e.~$\phi(u)=bu, b>0$ (resp.~the branching mechanism is quadratic, i.e.~$\psi(u)=\sigma^2 u^2 +mu$).
\end{remark}

We proceed by stating some sufficient conditions, expressed in terms of the characteristics of both mechanisms, for the mapping $\kappa$ defined in \eqref{eq:def_k} to satisfy the specific conditions appearing in the smoothness properties of the absolutely continuous part of the transition kernel.

 \begin{prop} \label{lem:kap} \label{lem:diff_kappa} We have the following.
 \begin{enumerate}[(1)]
 \item \label{it:prop121}
\begin{enumerate}[(i)]
\item\label{item:lem_kap_sigma} If $\sigma^2  + b >0$  then
\begin{equation}
\Si(0^+)=\overline{\kappa}(0^+)=\frac{b}{\sigma^2} \in [0,\infty].
\end{equation}
\item \label{item:lem_kap_regvar} If $\sigma^2  + b =0$ then
\begin{equation}
\underline{\kappa}(0^+) \geq \liminf_{y\to 0}\overline{\mu}(y) W(y).
\end{equation}
 In particular $\underline{\kappa}(0^+)=\infty$ if  $\overline{\Pi}(y) \stackrel{0}{=} {\rm{O}}(  y^{-\alpha})$ and $\frac{1}{\bar{\mu}(y)}\stackrel{0}{=} {\rm{O}}(  y^{\beta})$ with $1<\alpha<1+\beta<2$, where throughout
 $f \stackrel{a}{=} {\rm{O}}\left(g\right)$ for $a \in [-\infty,\infty]$ means that $\limsup_{x \to a } \left| \frac{f(x)}{g(x)}\right| < \infty$. Furthermore, $\overline{\kappa}(0^+)=0$  if $\bar{\mu}(0^+)<\infty$.
\end{enumerate}
\item\label{it:prop122} Assume  that $\underline\kappa(0^+)=\overline\kappa(0^+)<\infty$.  Then
$\kappa\in C^1(\R^+)$ and  $\kappa^{(1)}\in L^1_{loc}(\R^+)$   if one of the following holds.
\begin{enumerate}[(i)]
\item \label{it:sigma_nul_W}  $\sigma=0$, $W^{(1)},\bar{\mu}\in C^1(\R^+)$ and for some $\delta>0$, we have that $W^{(2)}$ is non-positive on $(0,\delta)$ and
\begin{equation*}
-\int_0^{\delta} \frac{y\bar{\mu}^{(1)}(y) dy}{\int_0^ y \overline{\overline \Pi}(r) d r } d y <\infty.
\end{equation*}
\item \label{it:mu_finite} $\sigma=0$, $\bar{\mu}(0^+)<\infty$,  $\bar{\mu}\in C^1(\R^+)$ and $\bar{\mu}^{(1)}\in L^1_{loc}(\R^+)$.
\item  \label{it:sigma_pos} $\sigma>0$ and $\bar{\mu} \in C(\R^+)$. Moreover, if in addition  $b=0$, $\bar{\mu}(0^+)<\infty$,  $\bar{\mu}\in C^1(\R^+)$ and $\bar{\mu}^{(1)}\in L^1_{loc}(\R^+)$ then  $\kappa\in C^2(\R^+)$.
\end{enumerate}
\end{enumerate}
	\end{prop}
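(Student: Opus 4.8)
The plan rests on one algebraic identity. Since $W(0)=0$, differentiating the Laplace characterization \eqref{eq:def_W} gives $\int_0^\infty e^{-uy}W^{(1)}(y)\,\d y=u/\psi(u)=1/\phi_p(u)$ with $\phi_p$ as in \eqref{eq:whl}. As $\kappa=bW^{(1)}+W^{(1)}\ast\overline{\mu}$ and, by \eqref{eq:def_Bernstein}, $\int_0^\infty e^{-ur}\overline{\mu}(r)\,\d r=\phi(u)/u-b$, the convolution theorem yields
\[ \int_0^\infty e^{-uy}\kappa(y)\,\d y=\frac{1}{\phi_p(u)}\Big(b+\tfrac{\phi(u)}{u}-b\Big)=\frac{\phi(u)}{u\,\phi_p(u)}=\frac{\phi(u)}{\psi(u)},\qquad u>0, \]
so $\widehat{\kappa}=\phi/\psi=\piv^{(1)}$. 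Together with the sandwich bound coming from the monotonicity of $\overline{\mu}$ and the identity $\int_0^y W^{(1)}(y-r)\,\d r=W(y)$,
\[ \overline{\mu}(y)\,W(y)\ \le\ \int_0^y W^{(1)}(y-r)\,\overline{\mu}(r)\,\d r\ \le\ \overline{\mu}(0^+)\,W(y),\qquad y>0, \]
this drives the whole proof.

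For part (1), I would read off the boundary values of $\kappa$ from those of $W^{(1)}$. The standing assumption $\int^\infty\!\d u/\psi(u)<\infty$ rules out the bounded-variation regime, so when $\sigma=0$ the underlying process has unbounded variation and $W^{(1)}(0^+)=\infty$, whereas when $\sigma>0$ the Tauberian theorem applied to $1/\phi_p(u)\sim(\sigma^2u)^{-1}$ (cf.\ Lemma~\ref{eq:propW}) gives $W^{(1)}(0^+)=\sigma^{-2}$. In the case $\sigma>0$ the upper sandwich bound and $W(0^+)=0$ force the convolution term to vanish at $0$, so $\kappa(0^+)=b\sigma^{-2}$; in the case $\sigma=0,\,b>0$ one has $\kappa\ge bW^{(1)}\to\infty=b\sigma^{-2}$. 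When $\sigma=b=0$ the lower sandwich bound is precisely $\underline{\kappa}(0^+)\ge\liminf_{y\to0}\overline{\mu}(y)W(y)$, while $\overline{\mu}(0^+)<\infty$ gives $\overline{\kappa}(0^+)\le\overline{\mu}(0^+)W(0^+)=0$. For the regular-variation criterion I would feed $\overline{\Pi}(y)\stackrel{0}{=}{\rm O}(y^{-\alpha})$ into $\phi_p(u)=m+\int_0^\infty(1-e^{-ur})\overline{\Pi}(r)\,\d r\stackrel{\infty}{=}{\rm O}(u^{\alpha-1})$, so $1/\psi(u)\gtrsim u^{-\alpha}$, and transfer this through a Tauberian theorem for the increasing function $W$ to $W(y)\gtrsim y^{\alpha-1}$; with $\overline{\mu}(y)\gtrsim y^{-\beta}$ and $\alpha<1+\beta$ this makes $\overline{\mu}(y)W(y)\gtrsim y^{\alpha-1-\beta}\to\infty$.

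For part (2) the task is to differentiate $\kappa=bW^{(1)}+W^{(1)}\ast\overline{\mu}$ and to show $\kappa^{(1)}$ is continuous and locally integrable; note that the assumption $\underline{\kappa}(0^+)=\overline{\kappa}(0^+)<\infty$ forces $b=0$ whenever $\sigma=0$, by part (1). The obstruction is that one cannot simply move the derivative onto one factor, since integration by parts throws out the boundary terms $W^{(1)}(0^+)\overline{\mu}(y)$ and $\overline{\mu}(0^+)W^{(1)}(y)$, both infinite exactly when $W^{(1)}(0^+)=\overline{\mu}(0^+)=\infty$. When $\overline{\mu}(0^+)<\infty$ the single integration by parts $\kappa=\overline{\mu}(0^+)W+W\ast\overline{\mu}^{(1)}$ is valid and gives $\kappa^{(1)}=\overline{\mu}(0^+)W^{(1)}+W^{(1)}\ast\overline{\mu}^{(1)}\in L^1_{loc}(\R^+)$ by Young's inequality, continuous because $W^{(1)}\in C(\R^+)$; when $\sigma>0$ the finiteness $W^{(1)}(0^+)=\sigma^{-2}$ lets the derivative fall on $W^{(1)}$ (with $\overline{\mu}\in C(\R^+)$ making the boundary term continuous), and the additional regularity of $\overline{\mu}$ in the last assertion produces a second derivative.

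The genuinely delicate case is $\sigma=0$ with $\overline{\mu}(0^+)=\infty$. Here the role of the hypothesis $W^{(2)}\le0$ on $(0,\delta)$ is to make $W^{(1)}$ decreasing, which upgrades the identity $\widehat{W^{(1)}}=1/\phi_p$ together with $\phi_p(u)\ge u\int_0^{1/u}e^{-ur}\PPP(r)\,\d r$ into the pointwise bound
\[ W^{(1)}(y)\ \le\ \frac{C}{\int_0^{y}\PPP(r)\,\d r},\qquad 0<y<\delta, \]
via the elementary one-sided Tauberian estimate for monotone functions. I would then differentiate $\kappa$ through a decomposition of the convolution that keeps each factor away from its singularity at $0$, and dominate the resulting near-diagonal contribution by the displayed bound; this reduces the local integrability of $\kappa^{(1)}$ to the finiteness of $-\int_0^{\delta} y\,\overline{\mu}^{(1)}(y)\big/\!\int_0^{y}\PPP(r)\,\d r\;\d y$, which is exactly the stated hypothesis, with continuity of $\kappa^{(1)}$ following along the way. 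Tracking the boundary term generated by the decomposition and showing that it too is absorbed by this single integral condition is where I expect the main technical effort to lie.
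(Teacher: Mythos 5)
Your treatment of part (1) mostly follows the paper's route (sandwich bounds plus the renewal estimate for $W$), but one step fails as stated: in (1)(i) with $\sigma>0$ you kill the convolution term with "the upper sandwich bound" $\int_0^y W^{(1)}(y-r)\overline{\mu}(r)\,dr\le\overline{\mu}(0^+)W(y)$, which is vacuous precisely when $\overline{\mu}(0^+)=\infty$ --- a case the proposition allows. The paper instead uses that for $\sigma>0$ the function $W^{(1)}$ is bounded near $0$ (it is continuous up to $0$ with $W^{(1)}(0^+)=\sigma^{-2}$, and $W^{(1)}\in C^1(\R^+)$ by Chan--Kyprianou--Savov), so that $\int_0^y W^{(1)}(y-r)\overline{\mu}(r)\,dr\le \sup_{r\in[0,y]}W^{(1)}(r)\int_0^y\overline{\mu}(r)\,dr\to0$, since $\int_0^y\overline{\mu}(r)\,dr\to0$ by $\int_0^\infty(1\wedge r)\mu(dr)<\infty$. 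Relatedly, in (1)(ii) your "one-sided Tauberian transfer" from a lower bound on $u/\psi(u)$ to a pointwise lower bound on $W$ is not a valid step for general monotone functions; what makes it work is the two-sided renewal estimate $W(y)\asymp 1/\phi_p(1/y)\asymp y/(\sigma^2+my+\int_0^y\PPP(r)\,dr)$ of \eqref{est:W}, which is the tool the paper uses directly.

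The substantive gap is in case (2)(i). Write $I(y)=\int_0^y\PPP(r)\,dr$. Your pointwise bound $W^{(1)}(y)\le C/I(y)$ on $(0,\delta)$ is correct (from $W^{(1)}$ decreasing there, $W^{(1)}(y)\le 2W(y)/y\le 2C/I(y)$), but dominating the diagonal term $W^{(1)}(\tfrac y2)\overline{\mu}(\tfrac y2)$ by it requires $\int_0\overline{\mu}(y)/I(y)\,dy<\infty$, and this is \emph{strictly stronger} than the stated hypothesis $\int_0 y\bigl(-\overline{\mu}^{(1)}(y)\bigr)/I(y)\,dy<\infty$: by Fubini the former dominates $\int_0\bigl(-\overline{\mu}^{(1)}(s)\bigr)\bigl(\int_0^s dy/I(y)\bigr)ds$, and $\int_0^s dy/I(y)$ can be much larger than $s/I(s)$. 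Concretely, take $\PPP$ near $0$ so that $I(y)=y\ln^2(1/y)$ (this satisfies \eqref{eq:standing_assumption}) and $\overline{\mu}(y)=\ln(1/y)$, so $-\overline{\mu}^{(1)}(y)=1/y$: the hypothesis integral $\int_0 dy/(y\ln^2(1/y))$ converges, while $\int_0\overline{\mu}(y)/I(y)\,dy=\int_0 dy/(y\ln(1/y))$ diverges. So your reduction cannot close. The idea you are missing is the paper's integration by parts on the diagonal term: after splitting the convolution at $y/2$ (Lemma \ref{lem_conv_diff_split}), one writes $\int_\epsilon^{a}W^{(1)}(r)\overline{\mu}(r)\,dr= W(a)\overline{\mu}(a)-W(\epsilon)\overline{\mu}(\epsilon)+\int_\epsilon^{a}W(r)\bigl(-\overline{\mu}^{(1)}(r)\bigr)\,dr$ and bounds $W$ \emph{itself} (not $W^{(1)}$) by $W(r)\le Cr/I(r)$, which turns the last integral exactly into the hypothesis. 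The sign condition $W^{(2)}\le 0$ on $(0,\delta)$ is then used not to produce a pointwise bound but in a Fubini argument for $\int_0^{y/2}W^{(2)}(y-r)\overline{\mu}(r)\,dr$, whose local $L^1$ norm reduces to $\int_0^\delta\bigl(W^{(1)}(r)-W^{(1)}(\delta)\bigr)\overline{\mu}(r)\,dr$, i.e.\ again to the finiteness of $\int_0 W^{(1)}\overline{\mu}$ just obtained. Without this integration-by-parts mechanism, the "main technical effort" you correctly anticipate cannot be carried out from the stated integral condition alone.
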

\begin{remark}
Note that if $\psi(u)=u^2 + u$, then
$W^{(1)}(y) = e^{-y} $ and thus
\[ \kappa(y) = e^{-y} \left( b  + \int_0^y e^{-r} \overline{\mu}(r)dr\right), \]
which implies $\kappa(0^+)=b$ as in   Proposition \ref{lem:kap}\eqref{item:lem_kap_sigma}.
Assume further that $\mu(dr) = \delta_1(dr)$. Then $\overline{\mu}(r) = \mathbb{I}_{\{r\leq 1\}} \notin C(\R^+) $ and
\[ y \mapsto \kappa(y) =  e^{-y} \left( b  +\left(1-e^{-y\wedge 1} \right)\right) \notin C^1(\R^+), \]
 which shows that Proposition \ref{lem:kap}\eqref{it:sigma_pos} is sharp.
\end{remark}
\begin{remark}\label{remark_smoothness_kappa}
Proposition \ref{lem:diff_kappa} gives some conditions under which $\kappa$ is in $C^1(\R^+)$ and $\kappa^{(1)}$ is in $L_{loc}^1(\R^+)$. In   part \eqref{it:sigma_nul_W}, it is assumed that $W^{(2)}$ exists, is in $C(\R^+)$ and is further negative in a neighbourhood of zero. Unfortunately, not much is known about which L\'evy measures $\Pi$ imply these conditions on the scale function  (in the  situation where $\sigma=0$ and $\int_0^1 r\Pi(d r)=\infty$). It is known that if $\overline{\overline \Pi}$ is log-convex, then $W^{(1)}$ is non-increasing (but not necessarily in $C^1(\R^+)$), whereas if $\overline \Pi$ is completely monotone, then $W^{(1)}$ is completely monotone, see  \cite[Chap.~11]{SchillingSongVondracek10}. Recall that a non-negative function $f$  is completely monotone if it is in $C^\infty(\R^+)$ and $(-1)^n f^{(n)}(x)\geq 0$ for all $x>0$ and $n\in\mathbb N$. Higher order differentiability properties of $\kappa$ can be straightforwardly deduced from the expressions for $\kappa^{(1)}$ and $\kappa^{(2)}$, see e.g.~\eqref{eq:k2} below, given in Proposition \ref{lem:diff_kappa}  in combination with Lemma \ref{lem_conv_diff_split} below, upon imposing higher order continuous differentiability   on $W^{(1)}$ and $\bar{\mu}$. If $\sigma>0$, the problem of higher order (non-)differentiability   of $W^{(1)}$ is  studied  in Chan et al.~\cite{Chan-Kyp-Savov}. In particular, Theorem 2 in \cite{Chan-Kyp-Savov} says that if the Blumenthal-Getoor lower index $\inf\{\beta>0; \int_0^1 r^\beta \Pi(d r)<\infty\}<2$, then $W^{(2)}\in C^{n+1}(\R^+)$ if and only if $\overline \Pi\in  C^{n}(\R^+)$. When $\sigma=0$, again little is known about the existence of higher order derivatives except in the aforementioned case where $\overline \Pi$ is completely monotone.
\end{remark}
\noindent We emphasize that in fact the main results of this paper extend to the larger class of CBI$(\overline{\psi}, \overline{\phi})$
semigroups whose mechanisms $(\overline{\psi}, \overline{\phi})$ are in $\Nb \times \Bb$, which corresponds to the set of functions of the form
 \begin{equation} \label{eq:def_psib}
\overline{\mathbf{\psi}}(u) =\sigma^2 u^2 + \mathbf{m} u+ \int_0^\infty\left(e^{-u r}-1+ur\mathbb{I}_{\{|r|<1\}} \right)\mathbf{\Pi}(dr)-q,
 \end{equation}
 where $\sigma^2,q\geq 0, \mathbf{m}\in \R$ and $\mathbf{\Pi}$  is a L\'evy measure  satisfying $\int_0^\infty (1 \wedge r^2) \mathbf{\Pi}(dr) < \infty$
 and
 \[ \overline{\phi}(u) = \phi(u) +a,\] for some $a\geq 0$ and  $\phi$ of the form  \eqref{eq:def_Bernstein}, that satisfy the following conditions 
\begin{equation}\label{Analicity_of_the_mechanis}
	 \int^\infty \frac{du }{|\overline{\psi}(u)|}<\infty  \mbox{ and either }  \theta>0  \mbox{ or } \overline{\psi} ,\overline{\phi} \in \mathcal{H}(R) \textrm{ for some } R>0 \textrm{ and } \overline{\psi}^{(1)}(0) > 0,
\end{equation}
where 
 $\theta$ is the largest   root of the equation $\overline{\psi}(u)=0$, i.e.~\begin{equation}\label{eq:defintion_theta}
\theta =\sup\{u \geq 0; \: \overline{\psi}(u) =0\} \in [0,\infty).
\end{equation}
Note that since  $\int^\infty \frac{du }{|\overline{\psi}(u)|}<\infty$, we must have $\lim_{u\to\infty}\overline\psi(u)=\infty$ (see Lemma \ref{lem:mainco} and its proof below) and thus there exists at least one root of $\overline\psi$ as $\overline\psi(0)\leq 0$. 
Next, denote, for any $\eta \geq0$, $\Esc_{\eta}$ the $\eta$-Esscher transform, which is defined for a function $f:\Rpo\mapsto \R$, by $\Esc_{\eta}f(u)=f(u+\eta) - f(\eta)$. It is well-known and easy to prove that, with $\theta$ as in \eqref{eq:defintion_theta}, $\Esc_{\theta} \: \overline{\mathcal{N}} \subseteq  \Ne$  and $\Esc_{\theta} \:  \Bb \subseteq  \Be$, see e.g.~\cite[Example 33.14]{Sato-99}. Then,  we define the following transform
\begin{eqnarray}\label{def:phi0_and_psi0}
\Esc &:&  \Nb \times \Bb \rightarrow {\Ne} \times \mathcal B \nonumber  \\
& & (\overline{\psi}, \overline{\phi})  \mapsto (\psi, \phi) =\Esc(\overline{\psi}, \overline{\phi}) = (\Esc_{\theta}\overline{\psi}, \Esc_{\theta}\overline{\phi}). \end{eqnarray}
An interesting motivation underlying the introduction of $\Esc_\theta$ is the  two time-space Doob's transforms that  leave invariant the set of CBI-semigroups that are described in Proposition \ref{prop:transf} below.  The first transform seems to be original whereas the second one was proved by Roelly and Rouault in \cite{Rouault-Roelly}.   These transforms serve to simplify the notation and are useful to derive the smoothness properties of general CBI-semigroups in $\Nb \times \Bb$ from the one of CBI-semigroups in ${\Ne} \times \mathcal B$. They are proved in subsection \ref{sec:prooftrans}.
\begin{prop} \label{prop:transf}
	\begin{enumerate}
		\item \label{it:transf_theta}
Let $\overline{P}$ be a \CBIb semigroup where $(\overline{\psi}, \overline{\phi})	\in \Nb \times \Bb  $ and let $P$ be the  \CBI semigroup where $(\psi, \phi) =\Esc(\overline{\psi}, \overline{\phi})$. Writing $f_{\theta}(x)=e^{\theta x} f(x)$,  we have, for all $f\in  B_{b}(\R^+)$ and $t, x\geq 0$,
		\begin{equation}\label{eq:rel_sem}
		\overline{P}_t f (x) = e^{-\theta x}e^{-\overline{\phi}(\theta)t}P_t f_\theta(x).
		\end{equation}
		Consequently, by replacing $f$ by $f_{\theta}$ in the statements \eqref{it:sci}, \eqref{it:CinftyC0}, \eqref{it:sf} and \eqref{it:sh}, 
Theorem \ref{thm:regularity} also holds for $\overline{P}$.
		\item Let $\overline{P}$ be a ${\rm CB}(\overline{\psi})$ semigroup.
		Then, there exists  a \CBI semigroup $P$ where $(\psi, \phi)=(\Esc_\theta\overline{\psi}, (\Esc_\theta\overline{\psi})^{(1)} -m )$ with $m=\psi^{(1)}(0)$ such that, for any  $t,x>0$ and $f \in B_b(\R^+)$
		\begin{equation} \label{eq:cb-cbi}
		\overline{P}_tf(x) = x e^{-\theta x}e^{-m  t}P_t \bar{f}_{\theta}(x).
		\end{equation}
		where $\bar{f}_{\theta}(x) = \frac{f_{\theta}(x)}{x}$.
	\end{enumerate}
\end{prop}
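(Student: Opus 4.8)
The plan is to verify both identities on the exponentials $(e_\lambda)_{\lambda\geq 0}$, which by \eqref{eq:CBI-initial} and the density of $\Lambda$ in $C_0(\Rpo)$ characterise a CBI semigroup, and then to extend them by uniqueness of Laplace transforms. Throughout I write $t\mapsto v_t(\lambda)$ for the branching flow associated with $\psi$, the unique solution of $\partial_t v_t(\lambda)=-\psi(v_t(\lambda))$, $v_0(\lambda)=\lambda$, and $\overline v_t(\lambda)$ for the analogous flow of $\overline\psi$; by Kawazu and Watanabe \cite{Watanabe-CBI},
\begin{equation*}
P_t e_\lambda(x)=\exp\left(-xv_t(\lambda)-\int_0^t\phi(v_s(\lambda))\,ds\right),\qquad \overline P_t e_\lambda(x)=\exp\left(-x\overline v_t(\lambda)-\int_0^t\overline\phi(\overline v_s(\lambda))\,ds\right),
\end{equation*}
which matches Ogura's representation \eqref{eq:lt_cbi} through $v_t(\lambda)=B(A(\lambda)e^{-mt})$ and $\int_0^t\phi(v_s(\lambda))\,ds=\piv(\lambda)-\piv(v_t(\lambda))$.

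For \eqref{it:transf_theta}, the starting point is that, $\theta$ being a root of $\overline\psi$, the Esscher shifts give $\psi(u)=\overline\psi(u+\theta)$ and $\overline\phi(u+\theta)=\phi(u)+\overline\phi(\theta)$. I would then establish the flow relation
\begin{equation*}
\overline v_t(\lambda)=v_t(\lambda-\theta)+\theta,\qquad \lambda\geq\theta,
\end{equation*}
by checking that the right-hand side solves $\partial_t w_t=-\overline\psi(w_t)$ with $w_0=\lambda$ and invoking uniqueness. Substituting it into the two Laplace functionals and converting $\int_0^t\overline\phi(\overline v_s(\lambda))\,ds$ into $\int_0^t\phi(v_s(\lambda-\theta))\,ds+\overline\phi(\theta)t$ via the second Esscher identity yields \eqref{eq:rel_sem} for $f=e_\lambda$, $\lambda\geq\theta$; equivalently, the Laplace transforms of the measures $\overline P_t(x,dy)$ and $e^{-\theta x}e^{-\overline\phi(\theta)t}e^{\theta y}P_t(x,dy)$ agree on $[\theta,\infty)$. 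Uniqueness of Laplace transforms upgrades this to the kernel identity $\overline P_t(x,dy)=e^{-\theta x}e^{-\overline\phi(\theta)t}e^{\theta y}P_t(x,dy)$, and integrating against $f\in B_b(\Rpo)$ (with $f_\theta$ being $P_t(x,\cdot)$-integrable since, by the kernel identity, $\int e^{\theta y}P_t(x,dy)=e^{\theta x}e^{\overline\phi(\theta)t}$) gives \eqref{eq:rel_sem}. The transfer of Theorem \ref{thm:regularity} is then immediate: $e^{-\theta x}\in C^\infty(\Rpo)$ is bounded, $f\mapsto f_\theta$ sends the classes in \eqref{it:sci}, \eqref{it:CinftyC0}, \eqref{it:sf} into the corresponding domains for $P$ (e.g.\ $B_b(\Rpo)\subset\mathcal D_t$ after multiplication by $e^{\theta\cdot}$, for $t$ large), and the kernel identity turns the atom and density of $\overline P_t(x,dy)$ into smooth multiples of those of $P_t(x,dy)$.

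For the second assertion, $\overline P$ is a CB semigroup, so its immigration integral vanishes, $\overline P_t e_\lambda(x)=\exp(-x\overline v_t(\lambda))$, while the flow relation above persists because only the branching mechanisms are Esscher-shifted. The linchpin is the derivative of the flow in its initial datum: $\partial_\nu v_t(\nu)=\exp(-\int_0^t\psi^{(1)}(v_s(\nu))\,ds)$, which by the very definition $\phi=\psi^{(1)}-m$ becomes $\partial_\nu v_t(\nu)=e^{-mt}\exp(-\int_0^t\phi(v_s(\nu))\,ds)$; one first checks that $\phi=\psi^{(1)}-m$ is Bernstein, with drift $2\sigma^2$ and L\'evy measure $r\,\Pi(dr)$, so that $(\psi,\phi)\in\Ne\times\Be$. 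Since $\bar f_\theta(y)=e^{-(\lambda-\theta)y}/y=\int_{\lambda-\theta}^\infty e^{-\nu y}\,d\nu$ for $f=e_\lambda$ and $y>0$, Tonelli gives $P_t\bar f_\theta(x)=\int_{\lambda-\theta}^\infty P_t e_\nu(x)\,d\nu$, and the substitution $w=v_t(\nu)$, whose Jacobian is exactly $\partial_\nu v_t(\nu)=e^{-mt}e^{-\int_0^t\phi(v_s(\nu))ds}$, collapses this to $\tfrac{e^{mt}}{x}\bigl(e^{-xv_t(\lambda-\theta)}-e^{-xv_t(\infty)}\bigr)$, with $v_t(\infty)<\infty$ by the standing assumption $\int^\infty du/\psi(u)<\infty$. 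Multiplying by $xe^{-\theta x}e^{-mt}$ and comparing with $\int_{(0,\infty)}e^{-\lambda y}\overline P_t(x,dy)=e^{-\theta x}\bigl(e^{-xv_t(\lambda-\theta)}-e^{-xv_t(\infty)}\bigr)$ — the latter from $\overline v_t(\lambda)=v_t(\lambda-\theta)+\theta$ and $\overline P_t(x,\{0\})=e^{-\theta x}e^{-xv_t(\infty)}$ — proves \eqref{eq:cb-cbi} on exponentials, whence on all of $B_b(\R^+)$ by Laplace uniqueness.

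I expect the main obstacle to lie in the second assertion. Because $\bar f_\theta$ carries a $1/y$ singularity at the origin, the identity can only hold for the kernels restricted to $\R^+=(0,\infty)$, which forces one to account for the boundary mass at $0$: the term $e^{-xv_t(\infty)}$ must be identified on the branching side with the extinction probability $\overline P_t(x,\{0\})$, while the integral $\int_{\lambda-\theta}^\infty P_t e_\nu(x)\,d\nu$ converges precisely because the CBI$(\psi,\phi)$ kernel has no atom at $0$ (equivalently $\biv=\infty$, which follows here from $\phi=\psi^{(1)}-m$) and its density vanishes fast enough at $0$ to absorb the $1/y$ factor. Making the change of variables rigorous — controlling $v_s(\nu)$ as $s\to0$ and $\nu\to\infty$ and securing $v_t(\infty)<\infty$ — is the computational heart of the proof; by contrast, in \eqref{it:transf_theta} the only delicate point is to justify the Laplace extension in the presence of the growing weight $e^{\theta y}$.
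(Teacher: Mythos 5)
Your proposal is correct. Part \eqref{it:transf_theta} follows essentially the paper's own route: verification of \eqref{eq:rel_sem} on exponentials through the Esscher relations $\overline{\psi}(u)=\psi(u-\theta)$, $\overline{\phi}(u)-\overline{\phi}(\theta)=\phi(u-\theta)$, followed by uniqueness of the Laplace transform; your flow formulation $v_t(\lambda)=B(A(\lambda)e^{-mt})$ is an equivalent rewriting of Ogura's formula \eqref{eq:lt_cbi}, which the paper manipulates instead via $\overline{A}(\lambda)=A(\lambda-\theta)$, $\overline{B}(z)=B(z)+\theta$ and $\overline{\psi}^{(1)}(\theta)=m$. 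Where you genuinely diverge is part (2): the paper proves it in one line by combining \eqref{eq:rel_sem} with the transformation (3.37) of Li's monograph \cite{Li_Book}, whereas you re-derive that transformation from scratch. Your key identity --- the variational equation $\partial_\nu v_t(\nu)=\exp\left(-\int_0^t\psi^{(1)}(v_s(\nu))\,ds\right)=e^{-mt}e^{-\int_0^t\phi(v_s(\nu))\,ds}$, so that the Jacobian of the substitution $w=v_t(\nu)$ produces exactly the immigration term when $\phi=\psi^{(1)}-m$ --- is correct, as are the supporting facts: $\psi^{(1)}-m\in\Be$ with drift $2\sigma^2$ and L\'evy measure $r\,\Pi(dr)$; $v_t(\infty)=B(e^{-mt})<\infty$ by \eqref{eq:standing_assumption}; absence of an atom of $P_t(x,\cdot)$ at $0$ because $\int_0^\infty\frac{\psi^{(1)}(u)-m}{\psi(u)}\,du=\infty$ (the same computation as in the proof of Corollary \ref{corol:omega}); and the identification of $e^{-\theta x}e^{-xv_t(\infty)}$ with the extinction mass $\overline{P}_t(x,\{0\})$. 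What your argument buys is a self-contained proof that makes transparent why the immigration mechanism must be $\psi^{(1)}-m$ (it is precisely what the flow derivative generates); its cost is the extra bookkeeping on the boundary atom and the change of variables, which the paper's citation sidesteps. One small remark: your worry that the density of $P_t(x,\cdot)$ must ``vanish fast enough at $0$ to absorb the $1/y$ factor'' is not an additional requirement --- Tonelli together with the finiteness of $\int_{\lambda-\theta}^\infty P_te_\nu(x)\,d\nu$, which your own change of variables establishes, already makes $P_t\bar{f}_\theta(x)$ well defined.
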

The proof of Theorem \ref{thm:regularity} relies on a combination of  an in-depth analysis of the smoothness properties of the invariant measure  and a substantial refinement of the spectral decomposition of the transition kernels of CBI-semigroups which was originally studied by Ogura \cite{Ogura-70} and that we now state.  To this end, we need to introduce further notation. First, let
	$({\mathcal L}_n)_{n \geq 0}$ be the family of Sheffer polynomials whose generating function is $\Gx(z)$ given by \eqref{eq:def_Gx},  i.e.~for any $x\geq 0$,
	\begin{equation}\label{eq:def-Ln_statement}
	\Gx(z)=\sum_{n=0}^{\infty} {\mathcal L}_n(x) z^n, 
	\quad \vert z \vert < R_0.
	\end{equation}
We let $\nu$, respectively $\omega$, be a  non-negative  integrable function   on $\mathbb R^+$ whose  Laplace transform takes the form
 	\begin{equation}\label{def_nu}
 	\int_0^\infty e^{-\lambda y}\nu(y) dy= e^{-\piv(\lambda)} - e^{-\biv}, \quad \lambda \geq 0,
 	\end{equation}
 	respectively,
 \begin{equation}\label{def_omega}
 \int_0^\infty e^{-\lambda y} \omega(y) d y = 1-A(\lambda), \quad \lambda\geq 0,
 \end{equation}	
where we recall  that $A$, $\piv$ and $\biv$ are defined in \eqref{def_A}, \eqref{eq:def_Phi} and \eqref{absolute_continuity_condition0}. It will be shown in Proposition \ref{prop:invariant_measure} and Corollary \ref{corol:omega} below that the functions $\nu$ and $\omega$ are well-defined.
We further set for $n\geq 1$,
\begin{equation}\label{eq:co-eigen_series}
\mathcal W_{n}(y) = \sum_{j=1}^n \binom n j (-1)^j \omega^{*j}(y),
\end{equation}
where  $\omega^{*1} = \omega$, and, for any $n \geq 2$,
\begin{equation*} 
\omega^{*n}(y)= \omega^{*n-1} * \omega(y),
\end{equation*}
where $*$ stands for the standard convolution, i.e.~$f * g (y) = \int_0^y f(y-x) g(x)dx$.
%
Also, we set
 	\begin{equation*}
 	\lambda_n = m n,
 	\end{equation*}
 	where we recall that $m=\psi^{(1)}(0)$.
\begin{thm}  \label{thm:spectral-expansion}  Let $ P$ be a \CBI semigroup with   $(\psi,  \phi) \in \Ne \times \Be$.  Then, for any $t>T_0$, $f\in  \mathcal{D}_t \cup \Lambda$, and,
for all integers $\mathfrak{m}, \mathfrak{p} \geq 0$, we have
\begin{equation}\label{eq:derivatives_of_the_semigroup}
\frac{d^{\mathfrak{m}} }{d t^{\mathfrak{m}}} \left( P_tf \right)^{(\mathfrak{p})} (x)
	=  \sum_{n=\mathfrak{p}}^{\infty}(-\eign )^{\mathfrak{m}}e^{-\eign  t} {\mathcal L}_n^{(\mathfrak{p})}(x) \mathcal{V}_n f, \quad x\geq 0,
\end{equation}
where
the series is locally uniformly convergent in $x,t$ and,  for any $n\geq 0$, $
 	\mathcal{V}_nf  = \int_0^\infty f(y)\mathcal{V}_n(dy)$ with
 	\begin{equation} \label{eq:def_Vn}
 	 \mathcal V_n(dy) = e^{-\biv}\delta_0(dy) + \nu_n(y)dy,
 	\end{equation}
 	where for $n\geq 1$,
 	\begin{equation} \label{eq:definition_nu_n}
 		\nu_n(y) =  e^{-\biv} \mathcal{W}_{n}(y) + \mathcal{W}_{n} {\star} \nu(y) + \nu(y), \quad y> 0,
 	\end{equation}
 	and $\nu_0=\nu$.
 	In particular, for all $t>T_0$, $x,y\geq 0$, $P_t(x,dy)=e^{-\biv }\Gx(e^{-m t})\delta_0(dy)
 		+  p_t(x,y)dy$ with
 		\begin{equation} \label{eq:transition_kernel}
 		p_t(x,y)=
 	 		\sum_{n=0}^{\infty}e^{-\eign  t}{\mathcal L}_n(x)\nu_n(y).
 	\end{equation}
 	Finally, for all integers $ \mathfrak{m}, \mathfrak{p}\geq 0$ and $0\leq \mathfrak{q} \leq \Si+\bar{\mathfrak{q}}$, with $\bar{\mathfrak{q}}$  as in Theorem \ref{thm:regularity}\eqref{it:defq}, we have
 	\begin{equation}\label{eq:derivatives_of_the_density_in_y}
\frac{d^{\mathfrak{m}} }{d t^{\mathfrak{m}}} p_t^{(\mathfrak{p},\mathfrak{q})}(x,y)
 	=  \sum_{n=\mathfrak{p}}^{\infty}(-\eign )^{\mathfrak{m}}e^{-\eign  t}  {\mathcal L}_n^{(\mathfrak{p})}(x) \nu_n^{(\mathfrak{q})}(y)
 	\end{equation}
 	where, when $0\leq \mathfrak{q} \leq \Si-1$, the ${\mathfrak{q}}$-th derivative of $\nu_n$ is given by
 	$
 	\nu_n^{({\mathfrak{q}})}(y) = \mathcal{W}_{ n}  \star \nu^{({\mathfrak{q}})}(y).
 	$
 	 \end{thm}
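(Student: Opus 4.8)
\emph{Strategy and base case.} The plan is to read everything off Ogura's Laplace transform identity \eqref{eq:lt_cbi} together with the generating function \eqref{eq:def-Ln_statement}. Fix $t>T_0$, so that $e^{-mt}<R_0$ by \eqref{def:t0}; since $A(\lambda)\in[0,1)$ we then have $A(\lambda)e^{-mt}<R_0$, and \eqref{eq:def-Ln_statement} may be substituted into \eqref{eq:lt_cbi} to give, using $\eign=mn$,
\begin{equation*}
P_te_\lambda(x)=e^{-\piv(\lambda)}\sum_{n=0}^{\infty}\mathcal L_n(x)A(\lambda)^ne^{-\eign t},\qquad \lambda\ge 0.
\end{equation*}
The algebraic heart of the matter is to recognise $e^{-\piv(\lambda)}A(\lambda)^n$ as $\mathcal V_ne_\lambda$. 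By \eqref{def_omega} the Laplace transform of $\omega^{\star j}$ is $(1-A(\lambda))^j$, so the binomial theorem applied to \eqref{eq:co-eigen_series} yields $\int_0^\infty e^{-\lambda y}\mathcal W_n(y)\,dy=A(\lambda)^n-1$. Inserting this and \eqref{def_nu} into \eqref{eq:definition_nu_n}, a short computation gives $\int_0^\infty e^{-\lambda y}\nu_n(y)\,dy=A(\lambda)^ne^{-\piv(\lambda)}-e^{-\biv}$; adding the atom then produces $\mathcal V_ne_\lambda=e^{-\biv}+\int_0^\infty e^{-\lambda y}\nu_n(y)\,dy=A(\lambda)^ne^{-\piv(\lambda)}$, exactly as required. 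This establishes \eqref{eq:derivatives_of_the_semigroup} for $f=e_\lambda$ and $\mathfrak m=\mathfrak p=0$.

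\emph{Convergence, kernel, and general $f$.} I would next show that $\mu_t^x(dy):=\sum_{n\ge 0}e^{-\eign t}\mathcal L_n(x)\mathcal V_n(dy)$ is a well-defined finite signed measure. Splitting off the atom, $\sum_{n}e^{-\eign t}\mathcal L_n(x)e^{-\biv}\delta_0(dy)=e^{-\biv}\Gx(e^{-mt})\delta_0(dy)$ by \eqref{eq:def-Ln_statement}, while the absolutely continuous part has density $p_t(x,y)=\sum_{n}e^{-\eign t}\mathcal L_n(x)\nu_n(y)$. Convergence is driven by the geometric factor $e^{-\eign t}=(e^{-mt})^n$ with $e^{-mt}<R_0$: by \reflemma{Analicity_of_A_B_F} and the uniform bounds on the Sheffer polynomials and their derivatives obtained above, $\sup_{x\in K}|\mathcal L_n^{(\mathfrak p)}(x)|\le C_{K,\mathfrak p}R^{-n}$ for any $R<R_0$ and any compact $K$, while the relevant exponentially weighted moments of $\nu_n$ grow at most geometrically in $n$. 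Since, by the base case, the Laplace transform of $\mu_t^x$ coincides with $P_te_\lambda(x)$ for every $\lambda\ge 0$, uniqueness of Laplace transforms identifies $\mu_t^x(dy)=P_t(x,dy)$, which is \eqref{eq:deck}--\eqref{eq:transition_kernel}. Integrating a general $f$ against this kernel and interchanging sum and integral — legitimate because $f\in\mathcal D_t$ forces $|f(y)|\le Ce^{\lambda y}$ with $\lambda<\bar{\lambda}_t$, and $\bar{\lambda}_t$ is calibrated precisely so that $\sum_n e^{-\eign t}|\mathcal L_n(x)|\int_0^\infty e^{\lambda y}|\nu_n(y)|\,dy<\infty$ — yields \eqref{eq:derivatives_of_the_semigroup} with $\mathfrak m=\mathfrak p=0$ for all $f\in\mathcal D_t\cup\Lambda$.

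\emph{Differentiation in $t$, $x$, and $y$.} Differentiation in $t$ is immediate, since each summand carries the factor $e^{-\eign t}$ and is entire in the time variable, producing $(-\eign)^{\mathfrak m}$; differentiation $\mathfrak p$ times in $x$ and $\mathfrak q$ times in $y$ is carried out term by term, the interchange of the derivatives with the sum being justified by the same geometric estimates applied now to $\mathcal L_n^{(\mathfrak p)}$ and to $\nu_n^{(\mathfrak q)}$, which establishes \eqref{eq:derivatives_of_the_density_in_y} and the local uniform convergence. For the $y$-derivatives one differentiates the convolution \eqref{eq:definition_nu_n}; for $0\le\mathfrak q\le\Si-1$ the boundary terms vanish thanks to the behaviour $\nu(y)=\mathrm{O}(y^{\Si-1})$ as $y\to 0$ (equivalently $\nu\in C^{\Si-1}(\R^+)$ with $\nu^{(j)}(0^+)=0$ for $j<\Si-1$) established in Proposition~\ref{prop:invariant_measure}, which delivers the stated closed form for $\nu_n^{(\mathfrak q)}$; for $\Si\le\mathfrak q\le\Si+\bar{\mathfrak q}$ the additional regularity of $\kappa$ and $W$ encoded in $\bar{\mathfrak q}$ (Proposition~\ref{lem:diff_kappa}) supplies the required differentiability and the local boundedness of $\nu_n^{(\mathfrak q)}$.

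\emph{Main obstacle.} The genuinely delicate point is not the algebra of the base case but the \emph{uniformity} of the estimates: one must bound $|\mathcal L_n^{(\mathfrak p)}(x)|$ and the (possibly exponentially weighted) derivatives $\nu_n^{(\mathfrak q)}(y)$ by $C\rho^n$ with a base $\rho$ small enough that $\rho\,e^{-mt}<1$ across the entire admissible range of $x$, $y$ and of the growth rate $\lambda<\bar{\lambda}_t$ of $f$. Matching the exponential decay $e^{-\eign t}$ against the potential exponential growth $A(-\lambda)^n$ of the continued coefficients $\mathcal V_ne_{-\lambda}$ is exactly what forces the definition of $\bar{\lambda}_t$ in \eqref{eq:defBtbar_and_H}, and securing these bounds — together with the fine near-origin regularity of $\nu$ — is where the real work lies.
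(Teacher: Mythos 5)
Your proposal is correct and follows essentially the same route as the paper's proof: the same key algebraic identity $\mathcal V_n e_\lambda = A(\lambda)^n e^{-\Phi_\nu(\lambda)}$ obtained via the binomial theorem applied to \eqref{eq:co-eigen_series}, identification of the kernel by injectivity of the Laplace transform, Fubini on $\mathcal D_t$ with $\bar\lambda_t$ calibrated against the geometric bounds, and term-by-term differentiation justified by locally uniform convergence (the paper merely packages this through the auxiliary operators $S_t$ and $\overline{S_t}$ of \eqref{eq:def-SandbarS}). Two harmless slips: the near-origin vanishing $\nu^{(j)}(0^+)=0$ comes from Proposition \ref{thm:regularity_nu} (via Wolfe's theorem), not Proposition \ref{prop:invariant_measure}, and the exponential growth that forces the definition of $\bar\lambda_t$ is that of the total variation, $\vert\mathcal V_n\vert e_{-\lambda}\leq(2-A(-\lambda))^n e^{-\Phi_\nu(-\lambda)}$ from \eqref{bound_lapl_abs_Vn}, rather than $A(-\lambda)^n$ (which is bounded in modulus by $1$), exactly as your own absolute-convergence display already uses.
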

\begin{remark}
Note that from the Doob's transform \eqref{eq:rel_sem} in Proposition \ref{prop:transf} we get the following identity between the corresponding heat kernels
\begin{equation} \label{eq:dens_t}
 	\bar{P}_t(x,dy) = e^{-\theta(x-y)}e^{-\bar{\phi}(\theta)t}P_t(x,dy), \quad t,x,y\geq0.
 	\end{equation}
\end{remark}
\begin{remark} \label{rem:main}
 We point out that the phenomena that the linear functional space, here $\mathcal{D}_t$, for which the spectral representation is valid increases with respect to time, has been observed in recent works dealing with the spectral representation of non-self-adjoint (NSA) Markov semigroups, see e.g.~\cite{PS14}, \cite{Patie-Zhao} and \cite{Patie-Savov-GL}. This may be explained by the fact that in opposition to the self-adjoint case where, by the spectral theorem,  a resolution of the identity is available, the invariant subspaces of NSA operators do not form in general a basis of the Hilbert space yielding to convergent spectral expansion only on a subspace of the full Hilbert space.
 \end{remark}

\begin{remark}
In Proposition \ref{prop:eigen} below, we state that the set $(e^{-\eign t})_{n\geq0}$ is part of the point spectrum of the (unique continuous extension of the) CBI operator $P_t$ in the weighted Hilbert space
\begin{equation} \label{eq:l2nu} L^{2}(\mathcal{V})= \left\{f:\Rpo \to\mathbb R \mbox{ measurable}; \:  \int_0^\infty  f^2(y)  \mathcal{V}(dy) < \infty
\right\},\end{equation} where $\mathcal{V}=\mathcal{V}_0$ and the latter is defined in \eqref{eq:def_Vn}. The characterization  of the different components of the spectrum of $P_t$, that is  the point, continuous and residual spectrum, see e.g.~\cite{Dunford_II} for definition, seems to be  a delicate issue and goes beyond the scope of this work. We refer the interested readers to the recent paper by Patie and Savov \cite{PS14}  where an  approach based on the theory of Hilbert sequences has been developed to describe these different parts of the spectrum, including the algebraic and geometric multiplicities of the eigenvalues.
\end{remark}
\begin{remark}
  It is interesting to observe that  the condition $\psi, \phi \in \mathcal{H}(R)$ for some $R>0$, when $\theta =0$, ensures that the \CBI  semigroup  contains a countable set of isolated eigenvalues, that is, its (discrete) point spectrum is countable. Indeed,  under this condition, the expansion of the holomorphic mapping $\Gx$ enables us to define the eigenfunctions. We point out that Ogura \cite{Ogura-70} shows that when this condition is not satisfied and assuming some (restrictive) additional technical conditions on the mechanisms then the transition kernel of the corresponding CBI-semigroup admits an integral  representation. It would be interesting to relax Ogura's conditions in this situation and to study if the eigenvalues are part of the (continuous) point  or the continuous spectrum. Regarding the second assumption $\int^\infty \frac{du }{\psi(u)}<\infty$ in \eqref{eq:standing_assumption}, it ensures both the existence of eigenmeasures and the absolute convergence of the eigenvalues expansions.
   Finally, we remark that the analycity property of the mechanisms is, according to Lemma \ref{Analicity_of_A_B_F} below, equivalent to the existence of positive exponential moments of the associated L\'evy measures, that is about the behavior of the L\'evy measure at $\infty$ whereas, from   Lemma \ref{lem:mainco} below, the second condition $\int^\infty \frac{du }{\psi(u)}<\infty$ in \eqref{eq:standing_assumption}, when $\sigma^2=0$, is a condition on  their behaviors at $0$.
\end{remark}

\begin{remark}
The main improvement of our spectral representation 
in comparison to Ogura's one in \cite{Ogura-70} is our original characterization of both the eigenfunctions $\mathcal{L}_n$, see Section \ref{sec:polyn}, and of the eigenmeasures $\mathcal V_n$, which  allows us to study, in particular,   regularity properties of the CBI semigroup and its transition kernel.  Besides providing the Lebesgue decomposition of $\mathcal V_n$, \eqref{eq:def_Vn} and \eqref{eq:definition_nu_n} also lead to the following bound on the Laplace transform of $|\mathcal V_n|$, the total variation measure of the signed measure $\mathcal V_n$,
\begin{equation*}
\int_0^\infty e^{-\lambda y}|\mathcal V_n|(d y) \leq (2-A(\lambda))^n e^{-\Phi_\nu(\lambda)}, \quad \lambda>-(R_A\wedge R_\phi),
\end{equation*}
see Proposition \ref{prop:nu_n}  below.
This bound improves (since $A(\lambda)\in[0,1)$ for $\lambda\geq 0$) the one from Ogura, see (2.2) in  \cite{Ogura-70}, which reads
\begin{equation*}
\int_0^\infty e^{-\lambda y}|\mathcal V_n|(d y) \leq A(\lambda)^{-n}e^{-\Phi_\nu(\lambda)}, \quad \lambda>0.
\end{equation*}
This improved bound on $|\mathcal V_n|$ allows us in particular to show that \eqref{eq:derivatives_of_the_semigroup} holds for a wider class of functions $f$ than can be concluded from the results in \cite{Ogura-70}. In this vein, it is worth mentioning that when in $\eqref{eq:defBtbar_and_H}$ $\bar\lambda_t =  R_A \wedge R_\phi$  then, according to Lemma \ref{Analicity_of_A_B_F}, the functional spaces $\mathcal{D}_t$ and $ L^{2}(\mathcal{V})$  are comparable at least at infinity.

\end{remark}

\begin{example}\label{example_specrep}
Though our main focus is on  studying smoothness properties, we now look at a short example to illustrate the ingredients of the spectral representation in Theorem \ref{thm:spectral-expansion}. Note that in \cite{CLP-F} we study how the eigenmeasures and eigenfunctions can be numerically computed. Examples that deal with the regularity of CBI-semigroups will follow in Section \ref{sec:ex}. We look at the CBI$( {\psi}, {\phi})$-semigroup with  mechanisms given, for any $u\geq0$, by
\begin{equation}\label{mech_selfsim}
\begin{split}
 {\psi}(u)   = & (u+1)^{1+\alpha} -(u+1), \\
 {\phi}(u)   = & (u+1)^{\alpha} - 1,
\end{split}
\end{equation}
with  $0<\alpha \leq 1$.  We point out that when $\alpha=1$,  the  CBI$( {\psi}, {\phi})$ boils down to a linear diffusion which is  the CIR process.  We see that $m=\psi^{(1)}(0)=\alpha$, $A(\lambda) = 1 - \left( \frac{1}{\lambda+1} \right)^{\alpha}$, which by \eqref{def_omega}, leads to $\omega(y) = \frac{1}{\Gamma(\alpha)} y^{\alpha-1} e^{- y}$, where $\Gamma$ is the gamma function. Therefore recalling the well-known fact that the convolution of two gamma distributions with the same scale parameter is again a gamma distribution with the same scale parameter but with shape parameter equal to the sum of the individual shape parameters, we have $\mathcal W_n(y)=e^{-y}\sum_{j=1}^n \binom n j (-1)^j \frac{y^{\alpha j-1}}{\Gamma(\alpha j)} $.
Further,  $\Phi_\nu(\lambda)=\ln(\lambda+1)$ and thus $\nu(y)=e^{-y}$. Then, from the expression \eqref{eq:definition_nu_n}, we get that for $n\geq 0$,
 \begin{equation*}
 \nu_n(y)=    e^{-y}\sum_{j=0}^n \binom n j (-1)^j \frac{y^{\alpha j}}{\Gamma(\alpha j +1)},\: y>0.
 \end{equation*}
Note that with $\alpha=1$, $\nu_n(y)=    e^{-y} \mathrm{L}_n(x)$ where the $\mathrm{L}_n$'s  are the classical Laguerre polynomials, see e.g.~\cite[4.17.2 ]{Lebedev-72}.

  We have $B$, the inverse of $A$, equals $B(z) =  (1-z)^{-1/ \alpha} - 1.$ Thus $B \in \mathcal{H}(1)$ 
  and hence $T_0=0$ in \eqref{def:t0}, and, for all $n\geq 0$, $x\geq 0$, from \eqref{eq:def-Ln_statement}, we have
$ n! {\mathcal L}_n(x) = G^{(n)}_x(0)$ with
\[
G_x(z) = (B(z)+1)e^{-xB(z)}.
\]
Note that when $\alpha=1$, $G_x(z) =  (1-z)^{-1}e^{\frac{z}{1-z}x}$ which is the generating function of the classical Laguerre polynomials, that is, in this case, for all $n\geq 0$, ${\mathcal L}_n(x) =  \mathrm{L}_n(x)$.  Otherwise,
since for $j\geq 0$,
$
{(B(z) + 1)^{(j)}}_{\vert_{z=0}} =\frac{\Gamma(\frac{1}{\alpha} +j)}{\Gamma(\frac{1}{\alpha})}
$, we have, by Leibniz's formula, for  any $n\geq 0$,
\begin{equation*}
 {\mathcal L}_n(x) = \frac{1}{n!}\sum_{j=0}^n \binom n j \frac{\Gamma(\frac{1}{\alpha} +j)}{\Gamma(\frac{1}{\alpha} )} \left( e^{-x B(z)} \right)^{(n-j)}_{\vert_{z=0}},
\end{equation*}
with, by means of  the Faa Di Bruno's formula,
\[{\left(e^{-xB(z)}\right)^{(n-j)}}_{\vert_{z=0}} = \sum_{k=1}^{n-j} (-x)^k\mathcal{B}_{n-j,k}\left(\frac{\Gamma(\frac{1}{\alpha} +1)}{\Gamma(\frac{1}{\alpha})}, \frac{\Gamma(\frac{1}{\alpha} +2)}{\Gamma(\frac{1}{\alpha})}, \cdots, \frac{\Gamma(\frac{1}{\alpha} +n-j-k+1)}{\Gamma(\frac{1}{\alpha})}\right),\]
where the $\mathcal{B}_{n,k}$'s are the Bell polynomials.

Finally, by Theorem \ref{thm:spectral-expansion}, the CBI$(\psi,\phi)$-semigroup transition kernel is given by
\begin{equation*}
P_t(x,d y) =  \sum_{n=0}^{\infty} e^{-\alpha n t} \mathcal L_n(x) \nu_n(y) d y, \quad y>0,t>0,x\geq 0.
\end{equation*}
\end{example}

The remaining part of the paper is mainly devoted to  the proofs of the main statements,  namely  Theorem \ref{thm:regularity}, Proposition \ref{lem:kap}, Proposition \ref{prop:transf} and Theorem \ref{thm:spectral-expansion}. They are split into several parts where each of them may be of independent interest. Indeed, in the next Section, we review some useful preliminary results including different criteria for the main conditions and general results regarding smoothness property of solutions of convolution equations. We also provide therein the proof of Proposition \ref{lem:diff_kappa} and Proposition \ref{prop:transf}. Section \ref{sec:iv} contains a thorough study of smoothness properties of the absolutely continuous part of the invariant measure. We proceed by studying in detail the spectral components of the CBI-semigroups, and, in particular, we establish  some specific representations of each of them  which allow us to derive some analytical properties. Note, as  CBI-semigroups are in general non-self-adjoint operators, that the spectral components  include a set of eigenfunctions and  of eigenmeasures which when the latter are absolutely continuous may correspond to the sequence of eigenfunctions for the adjoint semigroups. More specifically, in Section \ref{sec:polyn}, we present an original study of the sequence of eigenfunctions by relating them to the Sheffer polynomials.  In Section \ref{sec:meas}   we provide explicit representations for the eigenmeasures which enable us to obtain both smoothness properties and uniform upper bounds for their absolutely continuous parts as well as for the successive derivatives of these latter, whenever they exist.  Section \ref{sec:proof_mr} includes  the last arguments required to prove Theorem \ref{thm:regularity} and Theorem \ref{thm:spectral-expansion}. Finally, the last Section contains several instances of CBI-semigroups which illustrate the variety of smoothness properties that this class may enjoy and also reveal that our results are sharp.


\section{Proof of Propositions \ref{lem:diff_kappa} and \ref{prop:transf}  and preliminary results} \label{sec:preli}
\subsection{Proof of Proposition \ref{prop:transf}}\label{sec:prooftrans}


 An application of \cite[Proposition 1.2]{Ogura-70} shows that, for all $\lambda > \theta$,
 \[ \overline{P}_te_\lambda (x) = e^{-\overline{\phi}(\theta)t }
 \exp\left(\int_{\overline{B} \left(\overline{A}(\lambda) e^{-\overline{\psi}^{(1)}(\theta)t}\right)}^{\lambda}\frac{\overline{\phi} (u) -\overline{\phi}(\theta)}{\overline{\psi}(u)}du
  - x \overline{B} \left(\overline{A}(\lambda) e^{-\overline{\psi}^{(1)}(\theta)t}\right)\right), \]
 where $\overline{B}$ is the inverse of  $\overline{A}(\lambda) = \exp\left(-\overline{\psi}^{(1)}(\theta) \int_\lambda^\infty\frac{du}{\overline{\psi}(u)}\right)$.
 Now, recalling that, for all $u>\theta$, $\overline{\phi}(u) -\overline{\phi}(\theta) = \phi(u-\theta)$ and $\overline{\psi}(u) = \psi(u-\theta)$,  it is plain that $\overline{\psi}^{(1)}(\theta) =\psi^{(1)}(0)=m$ and  $\overline{A}(\lambda) = A(\lambda -\theta)$ and $\overline{B}(z) = B(z)+\theta$.
It then easy to check, by a change of variables and \eqref{eq:lt_cbi}, that
\begin{eqnarray*}
\overline{P}_te_\lambda (x) & = & e^{-\overline{\phi}(\theta)t }
\exp\left(\int_{B \left(A(\lambda-\theta) e^{-mt}\right)+\theta}^{\lambda}\frac{ \phi (u-\theta) }{\psi(u-\theta)}du
- x \left(B \left(A(\lambda-\theta) e^{-mt}\right)+\theta\right)\right) \\
 & = & e^{-\theta x} e^{-\overline{\phi}(\theta)t } G_x(A(\lambda -\theta)e^{-mt}) \\
 & = & e^{-\theta x} e^{-\overline{\phi}(\theta)t } P_t e_{\lambda-\theta}.
\end{eqnarray*}
This proves the first part of Proposition \ref{prop:transf}.
The last claim follows by first applying  the transformation \eqref{eq:rel_sem} and then the transformation \cite[(3.37)]{Li_Book} recalling that $m <\infty$ as it is imposed in this latter reference.

\subsection{Criteria for the main conditions}
The conditions  as well as the criteria used in the description of the main results are given in terms of the CBI mechanisms or of the function $\kappa$ defined in \eqref{eq:def_k}. In this part, we aim at providing equivalent criteria expressed directly in terms of the characteristic triplets of these mechanisms.
We recall the following notation on asymptotic behaviours that will remain in force throughout the  paper.
\begin{eqnarray*}
f &\asymp &  g \textrm{ means that }  \exists \:  c>0  \textrm{ such that }
c \leq  \frac{f}{g} \leq c^{-1}, \\
f &\stackrel{a}{\sim}& g  \textrm{ means that } \lim_{x \to a}\frac{f(x)}{g(x)}=1, \textrm{ for some }  a\in \R\cup\{\pm\infty\}.
\end{eqnarray*}
We start  with the following result dealing with the first condition in \eqref{eq:standing_assumption}.
\begin{lemma} \label{lem:mainco} Let $\psi(u) =\sigma^2 u^2 + m u+ \int_0^\infty\left(e^{-u r}-1+ur \right)\Pi(dr)$ with $\sigma\geq 0$, $m>0$ and $\int_0^\infty(r\wedge r^2)\Pi(d r)<\infty$. Then
 \begin{equation} \label{eq:int_test}
  \int^{\infty}\frac{du}{\psi(u)}<\infty \Longleftrightarrow \sigma>0  \textrm{ or } \int_0 \frac{dv}{\int_0^{v}\overline{\overline{\Pi}}(r)dr}<\infty.
  \end{equation}
If for some $\epsilon>0$, $\liminf_{y\downarrow 0}\PPP(y)y^{\epsilon}>0$ then \eqref{eq:int_test} holds. However, if $\PPP(0^+)<\infty$ and $\sigma=0$, then  \eqref{eq:int_test} fails. When $\sigma^2=0$, \eqref{eq:int_test} is not equivalent to  $\PPP(0^+)=\infty$ as $\psi(u)=u(\ln(u+1)+1) \in \Ne$ with $\PPP(0^+)=\infty$ and $\ \int^{\infty}\frac{du}{u\ln(u+1)} = \infty$. 
\end{lemma}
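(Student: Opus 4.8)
The plan is to reduce everything to the growth of $\psi$ at infinity via the factorization \eqref{eq:whl}, namely $\psi(u)=u\phi_p(u)$ with $\phi_p(u)=\sigma^2u+m+u\int_0^\infty e^{-ur}\PPP(r)dr$. When $\sigma>0$ one has $\psi(u)\geq\sigma^2u^2$, so $\int^\infty\frac{du}{\psi(u)}\leq\sigma^{-2}\int^\infty u^{-2}du<\infty$ while the right-hand side of \eqref{eq:int_test} holds trivially; hence I would assume $\sigma=0$ throughout. Writing $H(v)=\int_0^v\PPP(r)dr$, Fubini together with the standing assumption $\int_0^\infty(r\wedge r^2)\Pi(dr)<\infty$ shows $H(v)<\infty$ for every $v>0$; moreover $H(0)=0$ and $H'=\PPP\geq0$ is non-increasing, so $H$ is increasing and concave, $v\mapsto H(v)/v$ is non-increasing, and $H(v)/v\to\PPP(0^+)$ as $v\to0$.

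The heart of the argument is the two-sided estimate
\[ \psi(u)\asymp mu+u^2H(1/u),\qquad u\to\infty. \]
To obtain it I would integrate by parts in $\int_0^\infty e^{-ur}\PPP(r)dr=\int_0^\infty e^{-ur}H'(r)dr$ (the boundary terms vanish since $H(r)e^{-ur}\to0$, $H$ growing at most linearly) to get $\phi_p(u)=m+u^2\int_0^\infty e^{-ur}H(r)dr$, whence $\psi(u)=mu+u^3\int_0^\infty e^{-ur}H(r)dr$. It then suffices to prove the Abelian comparison
\[ u\int_0^\infty e^{-ur}H(r)dr\asymp H(1/u). \]
Substituting $s=ur$ rewrites the left-hand side as $\int_0^\infty e^{-s}H(s/u)ds$; dividing by $H(1/u)$, I control $H(s/u)/H(1/u)$ by concavity. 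Since $x\mapsto H(x)/x$ is non-increasing, $H(s/u)\leq\max(1,s)H(1/u)\leq(1+s)H(1/u)$, giving the upper bound $\int_0^\infty e^{-s}(1+s)ds=2$; monotonicity gives $H(s/u)\geq H(1/u)$ for $s\geq1$, hence the lower bound $\int_1^\infty e^{-s}ds=e^{-1}$. This is the step I expect to be the only real obstacle, and it is precisely where the concavity of $H$ (equivalently, the monotonicity of the double tail $\PPP$) is essential.

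With this in hand the equivalence follows by the change of variables $v=1/u$, under which $\int^\infty\frac{du}{u^2H(1/u)}=\int_0\frac{dv}{H(v)}$. Since $\psi(u)\geq c\,u^2H(1/u)$ for large $u$, convergence of $\int_0\frac{dv}{H(v)}$ forces $\int^\infty\frac{du}{\psi(u)}<\infty$. Conversely, if $\PPP(0^+)<\infty$ then $uH(1/u)\to\PPP(0^+)$, so $\psi(u)\asymp u$ and $\int^\infty\frac{du}{\psi(u)}=\infty$, while $H(v)\sim\PPP(0^+)v$ gives $\int_0\frac{dv}{H(v)}=\infty$; this simultaneously settles the claim that \eqref{eq:int_test} fails when $\PPP(0^+)<\infty$ and $\sigma=0$. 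If instead $\PPP(0^+)=\infty$ the linear term $mu$ is negligible against $u^2H(1/u)$, so $\psi(u)\asymp u^2H(1/u)$ and $\int^\infty\frac{du}{\psi(u)}<\infty\Leftrightarrow\int_0\frac{dv}{H(v)}<\infty$, completing the equivalence.

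Finally, the two remaining assertions are read off from the $H$-characterization. If $\liminf_{y\to0}\PPP(y)y^{\epsilon}>0$ for some $\epsilon>0$, the same holds for every smaller exponent, so I may take $\epsilon<1$; then $\PPP(y)\geq c\,y^{-\epsilon}$ near $0$ yields $H(v)\geq\frac{c}{1-\epsilon}v^{1-\epsilon}$, whence $\int_0\frac{dv}{H(v)}\leq C\int_0 v^{\epsilon-1}dv<\infty$ and \eqref{eq:int_test} holds. For the last remark I would exhibit $\psi(u)=u(\ln(u+1)+1)$ as a genuine Lévy--Khintchine mechanism of the form \eqref{eq:def_psi}: taking $\PP(r)=e^{-r}/r$ (so that $\int_0^\infty(r\wedge r^2)\Pi(dr)<\infty$ and $R_\psi=1$), the Frullani integral gives $\int_0^\infty(1-e^{-ur})\PP(r)dr=\ln(u+1)$, hence $\phi_p(u)=1+\ln(u+1)$ and $m=1$. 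Here $\PPP(r)=\int_r^\infty e^{-s}s^{-1}ds\to\infty$ as $r\to0$, so $\PPP(0^+)=\infty$, yet $\int^\infty\frac{du}{u(\ln(u+1)+1)}=\infty$, which shows that \eqref{eq:int_test} is strictly stronger than the requirement $\PPP(0^+)=\infty$.
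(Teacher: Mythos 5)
Your proposal is correct, and its overall architecture coincides with the paper's: factor $\psi(u)=u\phi_p(u)$, reduce everything to the two-sided estimate $\psi(u)\asymp \sigma^2u^2+mu+u^2\int_0^{1/u}\PPP(r)dr$, and conclude by the change of variables $v=1/u$, with the $\liminf$ condition, the $\PPP(0^+)<\infty$ case, and the counterexample $\psi(u)=u(\ln(u+1)+1)$ handled just as in the paper (your Frullani identification of $\PP(r)=e^{-r}/r$ is the same computation the paper performs by integration by parts on $\Pi$). The one genuine difference is how the key estimate is obtained: the paper simply observes that $\phi_p\in\Be$ and invokes Bertoin's Proposition III.1, whereas you prove it from scratch by integrating by parts to write $\psi(u)=mu+u^3\int_0^\infty e^{-ur}H(r)dr$ with $H(v)=\int_0^v\PPP(r)dr$, and then establishing the Abelian comparison $u\int_0^\infty e^{-ur}H(r)dr\asymp H(1/u)$ via the concavity of $H$ (the bounds $e^{-1}$ and $2$ you obtain are correct). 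Your route buys a self-contained, elementary proof at the cost of a page of extra work; the paper's buys brevity at the cost of an external citation. Two minor remarks: your reduction to $\epsilon<1$ in the $\liminf$ step is valid but avoidable — the paper's one-line bound $H(v)\geq v\,\PPP(v)\geq Cv^{1-\epsilon}$, using only the monotonicity of $\PPP$, works for any $\epsilon>0$ directly; and your treatment of the case $\PPP(0^+)<\infty$, $\sigma=0$ (reading $\psi(u)\asymp u$ off the estimate) replaces the paper's even simpler observation that $\phi_p$ is non-decreasing with $\phi_p(\infty)=m+\PPP(0^+)<\infty$, so $\psi(u)\leq u\phi_p(\infty)$.
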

\begin{remark}
Note that the conditions in \eqref{eq:int_test} ensure  that the class of CBI-processes have paths  of unbounded variation, as either $\sigma^2>0$ or $\PPP(0^+)=\infty$. 
\end{remark}
\begin{proof}
 First, recall from \eqref{eq:whl} that
\begin{equation}\label{eq:wh}
 \psi(u)=u \phi_p(u)=u\left(\sigma^2 u+ u\int_0^{\infty}e^{-ur}(\PP(r)+m)dr \right)\end{equation} and so $\phi_p \in \Be$. Thus, \cite[Proposition III.1]{Bertoin-96} yields
 \begin{equation} \label{eq:est_ber}
 \psi(u) = u\phi_p(u) \asymp \sigma^2 u^2 + mu + u^2\int_0^{\frac{1}{u}}\overline{\overline{\Pi}}(r)dr.
  \end{equation}
 From this estimate, we easily get the first statement where for the integral test we have performed a change of variables.  Next, the condition $\liminf_{r\downarrow 0}\PPP(r)r^{\epsilon}>0$ for some $\epsilon>0$ implies that there exists $C>0$ such that for $r$ small enough, $\PPP(r)\geq Cyr^{-\epsilon}$ and as $\PPP$ is non-increasing, we get that    $\int_0^{v}\overline{\overline{\Pi}}(r)dr\geq v \PPP(v) \geq C v^{1-\epsilon}$, that is, from the preceding discussion,
 \begin{eqnarray}
  \int^{\infty}\frac{du}{\psi(u)} &\leq &  C^{-1}\int_0 v^{\epsilon-1}dv <\infty. \end{eqnarray}
  Next, since from \eqref{eq:wh} when $\sigma^2=0$, we get that  $\phi_p(\infty)=m+\PPP(0^+)$ and hence, when $\PPP(0^+)<\infty$, $\psi(u)\leq u\phi_p(\infty), u\geq 0,$ as $\phi_p$ is non-decreasing, which provides the statement in this case. Finally, observing that $\ln(u+1) = \int_0^{\infty}(1-e^{-ux})\frac{e^{-x}}{x}dx$, we easily deduce, by integration by parts, that  $u\ln(u+1)+u = u\int_0^{\infty}(1-e^{-ux})\frac{e^{-x}}{x}dx +u= \int_0^{\infty}(e^{-ux}-1+ux)e^{-x}\frac{(x+1)}{x^2}dx+u  \in \Ne$ with $\PPP(0^+)=\int_0^\infty \frac{e^{-x}}{x}d x=\infty$ and $ \int^{\infty}\frac{du}{u\ln(u+1)+u}=\infty$, which completes the proof of the Lemma.
\end{proof}

We proceed with these known and basic facts regarding the scale function $W$ defined in \eqref{eq:def_W}.
\begin{lemma} \label{eq:propW}
For any $\psi \in \Ne$, we have $W \in C^1(\R^+) \textrm{ with }  W(0)=0 \textrm{ and } W^{(1)}(y)> 0, \textrm{ for all } y> 0$. Moreover,  for all $u>0$,
 	\begin{eqnarray} \label{eq:pot}
		\frac{1}{\phi_p(u)}=\frac{u}{\psi(u)}&=&\int_0^{\infty}e^{-u y}W^{(1)}(y)dy,
	\end{eqnarray}
that is $W^{(1)}(y)dy$ is the potential measure of the subordinator whose Laplace exponent is $\phi_p$. Consequently,   $\lim_{y\to\infty}W(y)=1/m$ and $W^{(1)}\in L^1(\R^+)$.
\end{lemma}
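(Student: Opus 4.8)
The plan is to prove \reflemma{eq:propW} by exploiting the factorization $\psi(u)=u\phi_p(u)$ with $\phi_p\in\Be$, which reduces everything to classical facts about subordinators and their potential measures. First I would establish \eqref{eq:pot}: starting from the defining relation \eqref{eq:def_W}, namely $\psi(u)\int_0^\infty e^{-uy}W(y)\,dy=1$, I substitute $\psi(u)=u\phi_p(u)$ to get $\int_0^\infty e^{-uy}W(y)\,dy=\frac{1}{u\phi_p(u)}$. Since $W(0)=0$ (which I verify first, see below) and $W$ is locally absolutely continuous, integration by parts gives $\int_0^\infty e^{-uy}W(y)\,dy=\frac1u\int_0^\infty e^{-uy}W^{(1)}(y)\,dy$, whence $\int_0^\infty e^{-uy}W^{(1)}(y)\,dy=\frac{1}{\phi_p(u)}$. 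By the standard theory of subordinators, $1/\phi_p$ is exactly the Laplace transform of the potential (renewal) measure of the subordinator with Laplace exponent $\phi_p$, so $W^{(1)}(y)\,dy$ is that potential measure.

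Next I would address the regularity claims $W\in C^1(\R^+)$ and $W^{(1)}>0$ on $\R^+$, together with $W(0)=0$. The condition $\psi\in\Ne$ guarantees via \eqref{eq:est_ber} that $\psi(u)\asymp\sigma^2u^2+mu+u^2\int_0^{1/u}\PPP(r)\,dr$, and since $\int_0^\infty(r\wedge r^2)\Pi(dr)<\infty$ one checks that the associated spectrally negative L\'evy process has unbounded variation paths (this is precisely the content flagged in the remark following \reflemma{lem:mainco}, either $\sigma^2>0$ or $\PPP(0^+)=\infty$). For a spectrally negative L\'evy process of unbounded variation it is classical (see e.g.~\cite[Chap.~11]{SchillingSongVondracek10}) that the scale function $W$ is $C^1$ on $\R^+$; the value $W(0)=0$ follows from the unbounded-variation property as well. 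Positivity of $W^{(1)}$ on $\R^+$ follows from the fact that $W^{(1)}(y)\,dy$ is the potential measure of a subordinator whose drift or L\'evy measure is nontrivial, so its density is strictly positive on $(0,\infty)$; alternatively one invokes the known strict monotonicity of $W$ on $\R^+$.

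Finally, the two consequences $\lim_{y\to\infty}W(y)=1/m$ and $W^{(1)}\in L^1(\R^+)$ follow directly from \eqref{eq:pot} by monotone/Abelian arguments. Indeed, $W^{(1)}\geq 0$ is integrable because, by the monotone convergence theorem applied to \eqref{eq:pot} as $u\downarrow 0$, we have $\int_0^\infty W^{(1)}(y)\,dy=\lim_{u\downarrow 0}\frac{u}{\psi(u)}=\frac{1}{\psi^{(1)}(0)}=\frac1m$, using $\psi(0)=0$ and $\psi^{(1)}(0)=m>0$ from \eqref{theta_positive}. Since $W(0)=0$, integrating $W^{(1)}$ then yields $\lim_{y\to\infty}W(y)=\int_0^\infty W^{(1)}(y)\,dy=\frac1m$, and $W^{(1)}\in L^1(\R^+)$ is exactly the finiteness of this integral. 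The main obstacle I anticipate is not the Laplace-transform bookkeeping, which is routine, but cleanly justifying the $C^1$-regularity and $W(0)=0$ purely from the hypothesis $\psi\in\Ne$: this requires correctly invoking the unbounded-variation dichotomy from \reflemma{lem:mainco} and then citing the appropriate fluctuation-theory result, rather than re-deriving it.
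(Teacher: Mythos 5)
Your route coincides with the paper's for everything except one claim: you invoke Lemma \ref{lem:mainco} to get unbounded variation, cite fluctuation theory for $W(0)=0$ and $W\in C^1(\R^+)$ (the paper uses p.~254 and Proposition 5.1 of \cite{Lambert} rather than \cite{SchillingSongVondracek10}, but that is a matter of citation), derive \eqref{eq:pot} by integration by parts, and get $\lim_{y\to\infty}W(y)=1/m$ and $W^{(1)}\in L^1(\R^+)$ by monotone convergence, exactly as in the paper. The genuine gap is the strict positivity $W^{(1)}(y)>0$ for all $y>0$, which is part of the statement and for which your two proposed justifications both fail.

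Your first justification, that the potential measure of a subordinator with nontrivial drift or L\'evy measure has a strictly positive density, is false in general: a driftless compound Poisson subordinator whose L\'evy measure is supported on $[1,2]$ has potential measure $\sum_{n\geq 0}F^{*n}$, which vanishes identically on $(0,1)$. In the present setting the drift of $\phi_p$ is $\sigma^2$, and \eqref{eq:standing_assumption} allows $\sigma=0$ (unbounded variation then coming from the jump part); in that case every point is polar for the subordinator, so no hitting-probability argument is available, and knowing that the support of the potential measure is all of $[0,\infty)$ does not prevent a \emph{continuous} density from vanishing at isolated points. Your fallback, strict monotonicity of $W$, is logically insufficient for the same reason: a strictly increasing $C^1$ function can have a vanishing derivative at a point (consider $y\mapsto (y-1)^3$ near $y=1$). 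The paper closes precisely this hole with a specific argument: if $W^{(1)}(y_0)=0$ for some $y_0>0$, then since $y\mapsto W^{(1)}(y)/W(y)$ is non-increasing (see (8.26) in \cite{Kyprianou-14}), one gets $W^{(1)}\equiv 0$ on $[y_0,\infty)$; hence the potential measure charges nothing above $y_0$, i.e.\ $\mathbb{P}(S_t\geq y_0)=0$ for a.e.\ $t>0$, which is absurd because under $\psi\in\Ne$ the (killed) subordinator $S$ is non-trivial. Without this ingredient, or an equivalent one, the positivity assertion in the lemma remains unproven in your proposal.
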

\begin{proof}
Since, under the assumption $\int^\infty \frac{du }{\psi(u)}<\infty$, Lemma \ref{lem:mainco}  ensures that  the underlying L\'evy process has paths of unbounded variation, $W(0)=0$ and $W \in C^1(\R^+)$ follows from p.254 and Proposition 5.1 in   \cite{Lambert}. 
The identities in \eqref{eq:pot} follow from \eqref{eq:wh} and an integration by parts respectively.  Hence $W^{(1)}(y)dy$ is the potential measure of the subordinator, denoted by $(S_t)_{t\geq 0}$, whose Laplace exponent is $\phi_p$, i.e. $W^{(1)}(y)d y = \int_0^\infty \mathbb P(S_t\in d y) d t$. It follows that $W^{(1)}(y)\geq 0$ for all $y>0$. To show that actually $W^{(1)}(y)>0$ for all $y>0$, suppose instead that $W^{(1)}(y_0)=0$ for some $y_0>0$. Since $W^{(1)}(y)/W(y)$ is a non-increasing function for $y>0$ (see (8.26) in \cite{Kyprianou-14}), it follows that $W^{(1)}(y)=0$ for all $y\geq y_0$. But then $\mathbb P(S_t\geq y_0)=0$ for a.e. $t>0$ which is absurd. For the last two claims, by \eqref{eq:pot} and the monotone convergence theorem,
\begin{equation*}
\frac{1}{m} = \lim_{u\downarrow 0}	\frac{1}{\phi_p(u)} = \lim_{u\downarrow 0}\int_0^{\infty}e^{-u y}W^{(1)}(y)dy = \lim_{y\to\infty}W(y) - W(0) = \lim_{y\to\infty}W(y)
\end{equation*}
and so $W^{(1)}\in L^1(\R^+)$ since it is a positive function.
\end{proof}

\subsection{Derivatives  and smoothness of convolutions}\label{sec_conv}
In this section we present two lemmas on differentiability of convolutions, which will be used later on.
\begin{lemma}\label{lem_conv_diff_onederiv}
Let $f:\R^+ \to\mathbb R$ be absolutely continuous on $\R^+$ and $g\in L^1_{loc}(\R^+)$. Assume that $f'\in L^1_{loc}(\R^+)$, where $f'$ denotes a version of the density of $f$ and further assume that $f(0^+)=\lim_{y\downarrow 0}f(y) \in \R$. Then the convolution
\begin{equation*}
h(y) = \int_0^y f(y-r)g(r)  d r
\end{equation*}
has a density on $\R^+$ and a version of it is given, for any $y>0$, by
\begin{equation*}
h'(y) = \int_0^y f'(y-r)g(r) d r + f(0^+)g(y).
\end{equation*}
Moreover, $h \in C^1(\R^+)$  with derivative given by $h^{(1)}=h'$ if $g \in  C(\R^+)$  and either
 \begin{enumerate}[(i)]
  \item $f \in C^1(\R^+)$ or 
  \item  $g\stackrel{0}{=}{\rm{O}}(1)$. 
   \end{enumerate}
\end{lemma}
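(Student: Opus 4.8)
The plan is to prove the distributional (``has a density'') statement first, and then bootstrap to the genuine $C^1$ conclusions under the extra hypotheses. The natural strategy is to differentiate the convolution integral and justify the interchange of differentiation and integration, using the absolute continuity of $f$ to write $f(y-r)=f(0^+)+\int_0^{y-r}f'(s)\,ds$.

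\textbf{Step 1: the density formula.} First I would verify that the candidate $h'$ is locally integrable so that the statement makes sense: since $f'\in L^1_{loc}(\R^+)$ and $g\in L^1_{loc}(\R^+)$, the convolution $f'\star g$ is in $L^1_{loc}(\R^+)$ (Young-type/Tonelli argument), and $f(0^+)g$ is locally integrable as well. To show $h'$ is indeed a version of the density of $h$, I would substitute the identity $f(y-r)=f(0^+)+\int_0^{y-r}f'(s)\,ds$ into the definition of $h$, obtaining
\[
h(y)=f(0^+)\int_0^y g(r)\,dr+\int_0^y\!\!\int_0^{y-r} f'(s)\,ds\,g(r)\,dr.
\]
The first term has derivative $f(0^+)g(y)$ for a.e.\ $y$ by the Lebesgue differentiation theorem. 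For the second term I would apply Fubini/Tonelli to rewrite it as $\int_0^y\big(\int_0^{y-s}g(r)\,dr\big)f'(s)\,ds$, which exhibits it as an integral of the locally integrable function $s\mapsto (f'\star g)$ against Lebesgue measure and whose a.e.\ derivative is $(f'\star g)(y)=\int_0^y f'(y-r)g(r)\,dr$. Summing gives exactly the claimed $h'$.

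\textbf{Step 2: the $C^1$ upgrade.} Here the task is to show that, under either hypothesis (i) or (ii) together with $g\in C(\R^+)$, the version $h'$ is in fact continuous, whence $h\in C^1(\R^+)$ with $h^{(1)}=h'$. The decomposition $h'(y)=(f'\star g)(y)+f(0^+)g(y)$ reduces everything to continuity of the two summands. The term $f(0^+)g$ is continuous since $g\in C(\R^+)$. For $(f'\star g)$, under hypothesis (i) we have $f'=f^{(1)}\in C(\R^+)$, and the convolution of a continuous function with a locally integrable one (over the finite interval $[0,y]$) is continuous by a standard dominated-convergence/continuity-of-translation argument. Under hypothesis (ii), $g$ is bounded near $0$ (i.e.\ $g\stackrel{0}{=}{\rm O}(1)$) so $g\in L^\infty_{loc}(\R^+)$ when combined with $g\in C(\R^+)$ away from $0$; then I would write $(f'\star g)(y)=\int_0^y f'(s)g(y-s)\,ds$ and use that $f'\in L^1_{loc}$ against a locally bounded $g$, again invoking continuity of the integral in the upper limit plus continuity of translation to get continuity of the convolution.

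\textbf{Main obstacle.} The delicate point is the continuity of the convolution $f'\star g$ in case (ii), where neither factor is assumed continuous on all of $\R^+$: $f'$ is only $L^1_{loc}$ and $g$ is only continuous on $\R^+=(0,\infty)$ with a boundedness control at $0$. The argument must handle the behaviour near the endpoint $r=y$ (where the argument $y-r$ of $f'$ approaches $0$) and near $r=0$. I expect the cleanest route is to fix a compact interval, use the local boundedness of $g$ to dominate and the $L^1$-continuity of translations of $f'$ (approximating $f'$ in $L^1_{loc}$ by continuous functions) to pass to the limit; the boundedness hypothesis $g\stackrel{0}{=}{\rm O}(1)$ is precisely what prevents a blow-up contribution from the region where $f'$ is merely integrable, so verifying that no mass escapes there is the crux of the proof.
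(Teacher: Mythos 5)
Your Step 1 is sound and is essentially the paper's own Fubini argument run in the opposite direction (the paper integrates the candidate density and recovers $h$; you decompose $h$ via $f(y-r)=f(0^+)+\int_0^{y-r}f'(s)\,ds$), and your case (ii) is also exactly the paper's: dominate the integrand by $\sup_{[0,K]}|g|\cdot|f'(r)|$, which the hypothesis $g\stackrel{0}{=}{\rm{O}}(1)$ makes available, and apply dominated convergence. The genuine gap is case (i), which you wave through as ``standard''. The principle you invoke --- the convolution of a continuous function with an $L^1_{loc}$ function is continuous --- is false in this paper's setting, because here $C^1(\R^+)$ means continuously differentiable on the \emph{open} half-line $\R^+=(0,\infty)$ only, so $f^{(1)}$ may blow up at $0^+$; this is not a pedantic caveat but the very regime the lemma is built for (e.g.\ $f=W$ with $W^{(1)}(0^+)=\infty$ when $\sigma=0$). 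Concretely, $u(x)=x^{-1/2}$ is continuous on $\R^+$ and in $L^1_{loc}(\R^+)$, and $v(r)=|1-r|^{-1/2}\mathbb{I}_{\{0<r<2\}}\in L^1_{loc}(\R^+)$, yet $u\star v$ is finite for $y\neq 1$ and tends to $+\infty$ as $y\to 1$ (Fatou), so it is not in $C(\R^+)$. Moreover, even using that $g\in C(\R^+)$, neither of your two mechanisms closes the argument: a single dominated-convergence pass over $[0,y+\delta]$ has no dominating function, since $\sup_{|\delta|\leq \delta_0}|f^{(1)}(y+\delta-r)|=\infty$ for $r$ near the moving endpoint $y$; and the translation bound $\|f^{(1)}(\cdot+\delta)-f^{(1)}\|_{L^1[0,y]}\cdot\sup_{[0,y]}|g|$ requires $g$ to be bounded up to $0$, which is precisely what case (i) does not grant.

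The missing idea --- and the paper's key device --- is to exploit the continuity of \emph{both} $f^{(1)}$ and $g$ away from zero by splitting the convolution at the midpoint:
\begin{equation*}
\int_0^y f^{(1)}(y-r)g(r)\,dr=\int_0^{y/2} f^{(1)}(y-r)g(r)\,dr+\int_0^{y/2} g(y-r)f^{(1)}(r)\,dr .
\end{equation*}
In each piece the factor evaluated at $y-r$ ranges over $[y/2,y]\subset\R^+$, where it is bounded and continuous, while the other factor (possibly unbounded at $0$ but locally integrable) supplies the integrable dominating function, so dominated convergence applies piece by piece. This symmetric splitting is exactly how the paper proves case (i), and it is also the engine of the companion Lemma \ref{lem_conv_diff_split}. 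Note finally that your ``main obstacle'' paragraph inverts the difficulty: case (ii) is the routine one, whereas case (i) is where the extra idea is required.
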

\begin{proof} 
We have, for any $y>0$,
\begin{eqnarray*}
\int_0^y \int_0^r |f'(r-v)g(v)|  d v  d r &= &
 \int_0^y \int_r^y  |f'(v-r)|  d v |g(r)|  d r
 =   \int_0^y \int_0^{y-r} |f'(v)|  d v |g(r)|  d r \\
 &\leq & \int_0^y |g(r)|  d r \int_0^{y} |f'(r)|  d r  <  \infty,
\end{eqnarray*}
where we used the fact that $f'$ and $g$ are in $L^1_{loc}(\R^+)$. Hence  Fubini Theorem yields
\begin{eqnarray*}
\int_0^y  \int_0^r  f'(r-v)g(v)   d v  + f(0^+)g(r)  d r
&= &  \int_0^y \int_v^y   f'(r-v)   d r  g(v)   d v +  f(0^+)\int_0^y  g(r)   d r  \\
&= & \int_0^y (f(y-r)-f(0^+)) g(r)   d r  +  f(0^+)\int_0^x  g(r)   d r   \\
&= &  h(y).
\end{eqnarray*}
Hence the function $h$ has a density on $\R^+$ which is given by $h'$ as stated in the Lemma. Now assume that  $g$ is in $C(\R^+)$ and  let $y>0$. If  $f\in C^1(\R^+)$, then $f^{(1)}$ and $g$ are bounded on sets of the form $[a,b]\subset\R^+$ and therefore by the dominated convergence theorem,
\begin{eqnarray*}
\lim_{\delta\to 0} h'(y+\delta) & = & \lim_{\delta\to 0}\int_0^{\frac{y+\delta}{2}}  f^{(1)}(y+\delta-r)g(r)   dr   +  \lim_{\delta\to 0} \int_0^{\frac{y+\delta}{2}}  g(y+\delta-r)f^{(1)}(r)   dr + f(0^+)g(y) \\
&= & \int_0^{\frac{y}{2}}  f^{(1)}(y-r)g(r)   dr   +   \int_0^{\frac{y}{2} }  g(y -r)f^{(1)}(r)   dr + f(0^+)g(y) \\
&= & h'(y).
\end{eqnarray*}
If instead  $g$ is bounded  in a neighbourhood of zero, then
by the dominated convergence theorem,
\begin{eqnarray*}
\lim_{\delta\to 0} h'(y+\delta) & = &   \lim_{\delta\to 0} \int_0^{y+\delta }  g(y+\delta-y)f'(y)   d y + f(0^+)g(y)\\
&=&    \int_0^{y}  g(y -y)f'(y)   d y + f(0^+)g(y) \\
&= & h'(y).
\end{eqnarray*}
 Hence, in both cases, $h'$ is in $C(\R^+)$, which implies by the fundamental theorem of calculus that $h$ is (continuously) differentiable  on $\R^+$ with derivative given by $h^{(1)}=h'$.
\end{proof}

\begin{lemma}\label{lem_conv_diff_split}
Let $f,g\in C^{p-1}(\R^+)\cap L^1_{loc}(\R^+)$ for some $p\geq1$ and assume   that the $p$-th derivatives $f^{(p)}$ and $g^{(p)}$ exist on $\R^+$ and are bounded on compact (with respect to $\R$) subsets of $\R^+$.
Then $h \in C^p(\R^+)$ where, for any $y>0$, \begin{equation*}
h(y) = \int_0^y f(y-r)g(r)  d r
\end{equation*}
and
\begin{eqnarray*}
h^{(p)}(y) &=& \int_0^{\frac{y}{2}}  f^{(p)}(y-r)g(r) d r + \int_0^{\frac{y}{2}}  g^{(p)}(y-r)f(r) d r \\
&+& \frac 12\sum_{j=0}^{p-1} \left(  f^{(j)}(\tfrac{y}{2}) g(\tfrac{y}{2}) +  f(\tfrac{y}{2}) g^{(j)}(\tfrac{y}{2}) \right)^{(p-1-j)}.
\end{eqnarray*}
\end{lemma}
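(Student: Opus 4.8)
The plan is to prove the formula for $h^{(p)}$ by induction on $p$, using Lemma~\ref{lem_conv_diff_onederiv} as the base case engine and exploiting the symmetric splitting $h(y)=\int_0^{y/2}f(y-r)g(r)\,dr+\int_0^{y/2}g(y-r)f(r)\,dr$ valid for all $y>0$. First I would record the base case $p=1$. Writing $h(y)=h_1(y)+h_2(y)$ with $h_1(y)=\int_0^{y/2}f(y-r)g(r)\,dr$ and $h_2(y)=\int_0^{y/2}g(y-r)f(r)\,dr$, each integrand is now smooth enough (the hypotheses $f,g\in C^{p-1}\cap L^1_{loc}$ with bounded $p$-th derivatives on compacts guarantee it) that I can differentiate under the integral sign and account for the moving upper limit $y/2$ via the Leibniz integral rule. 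Differentiating $h_1$ gives $\int_0^{y/2}f^{(1)}(y-r)g(r)\,dr+\tfrac12 f(\tfrac y2)g(\tfrac y2)$, and symmetrically for $h_2$, which reproduces the claimed formula with $p=1$. The key point of the splitting at $y/2$ is that it keeps \emph{both} arguments bounded away from the origin, so that the singular behaviour of $f$ or $g$ at $0^+$ never enters and one does not need to invoke $f(0^+)$ as in Lemma~\ref{lem_conv_diff_onederiv}.

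Next I would set up the inductive step. Assuming the formula holds for $p-1$, I differentiate the expression for $h^{(p-1)}(y)$ once more. The two integral terms $\int_0^{y/2}f^{(p-1)}(y-r)g(r)\,dr$ and $\int_0^{y/2}g^{(p-1)}(y-r)f(r)\,dr$ are handled again by the Leibniz rule: differentiating the integrand raises the derivative order on $f$ or $g$ by one (producing the two leading integral terms of $h^{(p)}$), while the boundary contribution from the limit $y/2$ together with the chain-rule factor $\tfrac12$ produces exactly a term of the form $\tfrac12\bigl(f^{(p-1)}(\tfrac y2)g(\tfrac y2)+f(\tfrac y2)g^{(p-1)}(\tfrac y2)\bigr)$ from the top of the sum. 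Differentiating the finite boundary sum $\tfrac12\sum_{j=0}^{p-2}(f^{(j)}(\tfrac y2)g(\tfrac y2)+f(\tfrac y2)g^{(j)}(\tfrac y2))^{(p-2-j)}$ shifts each outer derivative index from $p-2-j$ to $p-1-j$, and reindexing these contributions together with the new top term reassembles the sum $\tfrac12\sum_{j=0}^{p-1}(\cdots)^{(p-1-j)}$ appearing in the statement. The bookkeeping here is the bulk of the work but is entirely mechanical once the Leibniz rule is applied consistently.

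The main obstacle, and the step deserving the most care, is the \emph{justification of differentiation under the integral sign} at each stage: one must verify that for $y$ in a compact subinterval $[a,b]\subset\R^+$ the relevant derivatives $f^{(k)}(y-r)$ and $g^{(k)}(y-r)$ for $r\in[0,y/2]$ (hence $y-r\in[a/2,b]\subset\R^+$) are dominated uniformly in $y$ by an $L^1_{loc}$ function of $r$, so that dominated convergence applies to the difference quotients. This is precisely where the hypothesis that $f^{(p)},g^{(p)}$ are bounded on compact subsets of $\R^+$ is used, and it is also what ensures each resulting term is continuous in $y$ (so that $h^{(p)}\in C(\R^+)$ rather than merely existing pointwise). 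I would also note that the continuity of the boundary-sum terms follows from $f,g\in C^{p-1}(\R^+)$, guaranteeing $h\in C^p(\R^+)$ as asserted. No appeal to the value of $f$ or $g$ at $0^+$ is needed anywhere, which is the structural advantage of splitting the convolution at the midpoint rather than working with the full interval $[0,y]$.
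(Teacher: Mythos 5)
Your proposal is correct and follows essentially the same route as the paper: the symmetric splitting of the convolution at $y/2$, a direct proof of the $p=1$ case by differentiating under the integral sign with the moving-boundary contribution (justified via mean-value-theorem domination and dominated convergence, exactly where the boundedness of $f^{(p)},g^{(p)}$ on compacts enters), followed by induction on $p$ with the reindexing of the boundary sum. The paper phrases the base case in terms of explicit difference quotients rather than citing the Leibniz integral rule, but the content and the use of hypotheses are the same.
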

\begin{proof} 
First we prove the claims for $p=1$. Let $y>0$. We can write for $\delta>0$,
\begin{eqnarray}\label{deriv_conv_splitting}
\frac{h(y+\delta)-h(y)}{\delta} &= &  \int_0^{\frac{y}{2} }  \frac{ f(y+\delta-r) -f(y-r)}{\delta} g(r) d r  +  \int_{\frac{y}{2} }^{\frac{y+\delta}{2}}  \frac{ f(y+\delta-r)}{\delta} g(r) d r  \nonumber \\
& + & \int_0^{\frac{y}{2} }  \frac{ g(y+\delta-r) -g(y-r)}{\delta} f(r) d r  +  \int_{\frac{y}{2}}^{\frac{y+\delta}{2}}  \frac{ g(y+\delta-r)}{\delta} f(r) d r.
\end{eqnarray}
By the mean value theorem, we get, for all $r\in[0,\tfrac{y}{2}]$ and  $\delta\in[0,\tfrac{y}{2}]$,
\begin{equation*}
\left| \frac{ f(y+\delta-r) -f(y-r)}{\delta} \right| \leq \sup_{r\in \left[\frac{y}{2}, \frac{3y}{2}\right]} f^{(1)}(r).
\end{equation*}
Thus, by the assumption that $f^{(1)}$ is bounded on sets of the form $[a,b]\subset \R^+$ and the dominated convergence theorem, we obtain that
\begin{equation*}
\lim_{\delta\downarrow 0} \int_0^{\frac{y}{2}}  \frac{ f(y+\delta-r) -f(y-r)}{\delta} g(r) d r
=   \int_0^{\frac{y}{2}} f^{(1)}(y-r)g(r) d r.
\end{equation*}
Due to the continuity of $f$ and $g$, we have by the mean-value theorem again that for each $\delta>0$ sufficiently small   there exists   $r_\delta\in \left[ \frac{y}{2}, \frac{y+\delta}{2} \right]$ such that
\begin{eqnarray*}
\lim_{\delta\downarrow 0} \int_{\frac{y}{2}}^{\frac{y+\delta}{2}}  \frac{ f(y+\delta-r)}{\delta} g(r) d r
=\lim_{\delta\downarrow 0}  f( y+\delta - r_\delta)g(r_\delta) \frac{\frac{y+\delta}{2}-\frac{y}{2}}{\delta}
=\tfrac 12 f(\tfrac{y}{2}) g(\tfrac{y}{2}).
\end{eqnarray*}
Similarly, we can treat the other two terms on the right-hand side of \eqref{deriv_conv_splitting}, which leads to
\begin{equation*}
\lim_{\delta\downarrow 0} \frac{h(y+\delta)-h(y)}{\delta} = \int_0^{\frac{y}{2}}  f^{(1)}(y-r)g(r) d r + \int_0^{\frac{y}{2}}  g^{(1)}(y-r)f(r) d r + f(\tfrac{y}{2}) g(\tfrac{y}{2}).
\end{equation*}
Similarly, one   shows that
\begin{equation*}
\lim_{\delta\uparrow 0} \frac{h(y+\delta)-h(y)}{\delta} = \int_0^{\frac{y}{2}}  f^{(1)}(y-r)g(r) d r + \int_0^{\frac{y}{2}}  g^{(1)}(y-r)f(r) d r + f(\tfrac{y}{2}) g(\tfrac{y}{2}).
\end{equation*}
This proves the claims for $p=1$. The results for any $p\geq 1$ then follows by a straightforward induction argument using the same steps as for the $p=1$ case.
\end{proof}

\subsection{Proof of Proposition \ref{lem:diff_kappa}}
\subsubsection{Proof of Proposition \ref{lem:diff_kappa}\eqref{it:prop121}}
First note that \eqref{eq:pot} and the estimate in \cite[Proposition III.1]{Bertoin-96} yield that, for any $y>0$,
\begin{equation} \label{est:W}
W(y) {\asymp} \frac{1}{\phi_p\left(\frac{1}{y} \right)}{\asymp} \frac{y}{\sigma^2 + my+ \int_0^y  \overline{\overline \Pi}(r) d r}.\end{equation}
 Thus, $\liminf_{y \to 0} \frac{W(y)}{y} \geq c \liminf_{y \to 0} \frac{1}{y\phi_p\left(\frac{1}{y} \right)}$ for some $c>0$.  As when $\sigma^2=0$, we have $\phi_p\left(u\right)\stackrel{\infty}{=}o(u)$, see also \cite[Proposition III.1]{Bertoin-96}, we easily deduce that in this case $W^{(1)}(0^+)=\infty$. Otherwise, combining \eqref{eq:pot} with \cite[Theorem III.2.5]{Bertoin-96}, we get that  $W^{(1)}(0^+)=\sigma^{-2}$. Next, let us  consider the case $\sigma^2 + b >0$. When $\sigma^2=0$ and $b>0$ the statement  follows from the preceding discussion as $\lim_{y \to 0 } \kappa(y) \geq \lim_{y \to 0 }
bW^{(1)}(y)=\infty$. Otherwise if $\sigma^2>0$ we have, as $W$ is increasing,
   \[ 0 \leq \lim_{y \to 0 }\int_0^yW^{(1)}(y-r)\overline{\mu}(r)dr \leq \sup_{r \in [0,y]}W^{(1)}(r) \lim_{y \to 0 } \int_0^y\overline{\mu}(r)dr =0 \]
   as, when	$\sigma>0$, $W^{(1)}(0^+)=\sigma^{-2}$, and, by  \cite[Theorem 1]{Chan-Kyp-Savov}, $W^{(1)}\in C^1(\R^+)$. Hence, for all $b\geq0 $,
   \[\lim_{y \to 0 } \kappa(y)= \lim_{y \to 0 }
bW^{(1)}(y) + \int_0^yW^{(1)}(y-r)\overline{\mu}(r)dr = \frac{b}{\sigma^2}, \]
which completes the proof of the statement for the case $\sigma^2 + b >0$.
Next, since $\overline{\mu}$ is non-increasing, we have, for all $y>0$,  \[\kappa(y)=
bW^{(1)}(y) + \int_0^yW^{(1)}(y-r)\overline{\mu}(r)dr \geq  bW^{(1)}(y) + \overline{\mu}(y) \int_0^yW^{(1)}(r)dr =  bW^{(1)}(y) + \overline{\mu}(y) W(y).\]
Thus, if $\sigma^2+b=0$,
 \[ \underline{\kappa}(0^+) = \liminf_{y \to 0} \kappa(y) \geq  \liminf_{y\downarrow 0}\overline{\mu}(y) W(y).\]
  Thus if   $\overline{\Pi}(y) \stackrel{0}{=} {\rm{O}}(  y^{-\alpha})$ and $\frac{1}{\bar{\mu}(y)}\stackrel{0}{=}{\rm{O}}(  y^{\beta})$,  we observe from \eqref{est:W} that for some $C_{\alpha,\beta}>0$,
   \begin{equation}
 \underline{\kappa}(0^+)    \geq  \liminf_{y\downarrow 0}\overline{\mu}(y) W(y)
 \geq \liminf_{y\downarrow 0}  \frac{ C_{\alpha,\beta} \bar{\mu}(y) }{ m + \frac 1y \int_0^y  \overline{\overline \Pi}(r) d r}
= \liminf_{y\downarrow 0} \frac{ C_{\alpha,\beta} y^{-\beta}}{m+\frac 1{\alpha-1} \frac 1{2-\alpha}  y^{1-\alpha} },
    \end{equation}
  which shows that $\underline{\kappa}(0^+)=\infty$ if  $1<\alpha<1+\beta<2$.
Finally if  $\overline{\mu}(0^+)<0$,  since $\overline{\mu}$ is non-increasing, we have, for all $y>0$,  \[
 \overline{\mu}(y) W(y) \leq \kappa(y) \leq  \overline{\mu}(0^+) W(y), \]
which gives the last claim  as  $W(0)=0$.
\subsubsection{Proof of Proposition \ref{lem:diff_kappa}\eqref{it:prop122}}
Assume first that $\sigma=0$, then necessarily $b=0$ as otherwise $\Si=\infty$. Under the assumption that $W^{(1)},\bar{\mu}\in C^1(\R^+)$, we can use Lemma \ref{lem_conv_diff_split} to deduce that $\kappa\in C^1(\R^+)$, and, for any  $y>0$,
\begin{equation*} 
\kappa^{(1)}(y) = \int_0^{\frac{y}{2}}  W^{(2)}(y-r)\bar{\mu}(r) d r + \int_0^{\frac{y}{2}}  \bar{\mu}^{(1)}(y-r)W^{(1)}(r)  d r + W^{(1)}(\tfrac{y}{2}) \bar{\mu}(\tfrac{y}{2}).
\end{equation*}
Now we show that $\kappa^{(1)}\in L^1_{loc}(\R^+)$.
First, by integration by parts and the fact that $W$ is a non-decreasing  function,
\begin{eqnarray}\label{int_diffscale_tailm}
 \frac{\int_0^x W^{(1)}(\tfrac{y}{2}) \bar{\mu}(\tfrac{y}{2}) d y}{2} & = & \lim_{\epsilon\downarrow 0}\int_{\epsilon}^{\frac{x}{2}}  W^{(1)}(r) \bar{\mu}(r) d r \nonumber \\
 &\leq &  W(\tfrac{x}{2})\bar{\mu}(\tfrac{x}{2}) - \int_0^{\frac{x}{2}}  W(r)\bar{\mu}^{(1)}(r) d r
 <  \infty,
 \end{eqnarray}
where the last line follows by the integral assumption and the fact that, in this case, $W(y)\asymp \frac{y}{m+\int_0^y  \overline{\overline \Pi}(r) d r}$, see  \eqref{est:W}.
Next, fix $x>2\delta$. By the assumptions we have $|W^{(2)}(y)|\leq C$ with $0<C<\infty$ for all $y\in[\delta,x]$  and $|W^{(2)}(y)|=-W^{(2)}(y)$ for all $0<y<\delta$. Therefore by Fubini Theorem,
\begin{eqnarray*}
\int_0^x \left| \int_0^{\frac{y}{2}}  W^{(2)}(y-r)\bar{\mu}(r) d r \right|  d y & \leq &
  \int_0^{\frac{x}{2}} \left( \int_{2r}^x \left| W^{(2)}(y-r) \right| d y \right) \bar{\mu}(r) d r \\
  &= &  \int_0^{\frac{x}{2}} \left( \int_{r}^{x-r} \left| W^{(2)}(v) \right| d v \right) \bar{\mu}(r) d r \\
  &\leq & \int_0^{\frac{x}{2}} \left( \int_{r}^{x-r} \left( C-W^{(2)}(v)\mathbb{I}_{\{v<\delta\}} \right) d v \right) \bar{\mu}(r) d r \\
  &= & C \int_0^{\frac{x}{2}} (x-2r) \bar{\mu}(r) d r - \int_0^{\delta} (W^{(1)}(\delta) - W^{(1)}(r) ) \bar{\mu}(r) d r\\
  &<&  \infty,
\end{eqnarray*}
where in the last line we used \eqref{int_diffscale_tailm}. Similarly, one shows that \[\int_0^x \left| \int_0^{\frac{y}{2}}  \bar{\mu}^{(1)}(y-r)W^{(1)}(r) d r \right|  d y<\infty\] and we conclude that  $\kappa^{(1)}\in L^1_{loc}(\R^+)$ which completes the proof of \eqref{it:sigma_nul_W}. Under the assumptions of \eqref{it:mu_finite}  we have by Lemma \ref{lem_conv_diff_onederiv}, for any $y>0$,
\begin{equation*} 
\kappa^{(1)}(y) =   \int_0^y \bar{\mu}^{(1)}(y-r)W^{(1)}(r) d r + \bar{\mu}(0) W^{(1)}(y),
\end{equation*}
and, thus $\kappa\in C^1(\R^+)$  and $\kappa^{(1)}\in L^1_{loc}(\R^+)$, that is, \eqref{it:mu_finite}   holds.
Assume now that $\sigma >0$ and $\bar{\mu} \in C(\R^+)$. Then, by  \cite[Theorem 1]{Chan-Kyp-Savov}, $W^{(2)}\in C(\R^+)$. Moreover, recalling, from \eqref{eq:pot}, that $W$ can be seen as the integrated potential measure of a subordinator whose L\'evy measure   has a density given by ${\overline{\Pi}}$, it follows by \cite[Corollary 9]{Chan-Kyp-Savov} that, for any $y>0$,
\begin{equation}\label{scale_2ndderiv_sigma}
W^{(2)}(y) = \frac{1}{\sigma^2}\sum_{n=1}^\infty  \left(- \frac 1{\sigma^2} \right)^n \overline{\overline{\Pi}}{}^{*n}(y).
\end{equation}
Since $\int_0^y\overline{\overline{\Pi}}{}^{*n}(r) d r = \left( \int_0^y \overline{\overline{\Pi}}(r) d r \right)^n$
and  $ \lim_{y \downarrow 0}\int_0^y \overline{\overline{\Pi}}(r) d r =0$, it follows that $\int_0^y |W^{(2)}(r)| d r<\infty$ for $y>0$ small enough and thus for all $y>0$ since $W^{(2)}$ is locally bounded. Hence $W^{(2)}\in L^1_{loc}(\R^+)$. As $W^{(1)}(0^+)=\sigma^{-2}$, we can use Lemma \ref{lem_conv_diff_onederiv} to deduce that for any $y>0$,
\begin{equation}\label{deriv_kappa_1}
\kappa^{(1)}(y) = b W^{(2)}(y) + \int_0^y W^{(2)}(y-r)\bar{\mu}(r) d r + \sigma^{-2} \bar{\mu}(y).
\end{equation}
Therefore $\kappa^{(1)} \in L^1_{loc}(\R^+)$ as  $W^{(2)}\in L^1_{loc}(\R^+)$ and, by Lemma \ref{lem_conv_diff_onederiv} again, $\kappa\in C^1(\R^+)$ as  $W^{(2)}\in L^1_{loc}(\R^+)\cap C(\R^+)$  and by assumption $\bar{\mu}\in C(\R^+)$. This proves the first claim of \eqref{it:sigma_pos}.
In order to prove the last claim, assume further that   $b=0$, $\bar{\mu}(0)<\infty$,  $\bar{\mu}\in C^1(\R^+)$ and $\bar{\mu}^{(1)}\in L^1_{loc}(\R^+)$.  Then we can use Lemma \ref{lem_conv_diff_onederiv} on \eqref{deriv_kappa_1} to deduce that $\kappa^{(1)}\in C^1(\R^+)$ with, for $y>0$,
\begin{equation} \label{eq:k2}
\kappa^{(2)}(y) =  \int_0^y W^{(2)}(y-r)\bar{\mu}^{(1)}(r) d r + \frac1{\sigma^2} \bar{\mu}^{(1)}(y) + \bar{\mu}(0)W^{(2)}(y).
\end{equation}

\section{Smoothness   of the invariant measure} \label{sec:iv} 
In this section we investigate fine distributional properties of the density of the absolutely continuous part of the invariant measure of the CBI-semigroups. We already point out that although we restraint our analysis to the framework of this paper, our results extend modulo mild modifications to the most general case. For sake of completeness, we start by stating and providing a short and  original  proof of some basic properties of this  invariant measure whose study  traces back to the work of Pinsky \cite{Pinsky}, see also  \cite{Keller} and \cite{Li_Book} for more recent references. 
\begin{prop}\label{prop:invariant_measure} Let $( \psi, \phi)\in {\Ne} \times \Be$. 
\begin{enumerate}
\item 	
	The \CBI semigroup $P$ admits a unique  invariant probability measure $\mathcal V$ on $\Rpo$, in the sense that, for any $f \in C_0(\Rpo)$,
\begin{equation}\label{eq:inv}
{\mathcal V}P_tf  = {\mathcal V}f.
\end{equation}
The  measure $\mathcal V$ is  infinitely divisible and its Laplace exponent is the function $\piv$, see \eqref{eq:def_Phi}, which is the following Bernstein function
\begin{equation} \label{eq:lt_nup}
	\piv(\lambda) = \int_0^{\infty} (1- e^{-\lambda r}) \frac{\kappa(r)}{r}dr.
	\end{equation}
where  $\kappa$,  defined in \eqref{eq:def_k}, satisfies $\int_0^{\infty}\kappa(r)dr<\infty$.
\item   There exists  $\nu \in L^1(\R^+)$  such that 
\begin{equation}\label{decomposition_of_Nu}
\mathcal V(dy) = e^{-\biv}\delta_0(dy) + \nu(y)dy,
\end{equation}
where we recall that $\biv = \lim_{\lambda \to \infty} \piv(\lambda)$.
Moreover, if $\phi\equiv 0$, then $\nu= 0$ a.e.~and  $\nu>0$ a.e.~otherwise.
\end{enumerate}
\end{prop}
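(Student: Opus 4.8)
The plan is to identify $e^{-\piv}$ as the Laplace transform of the unique invariant probability measure and then to extract its Lebesgue decomposition by a convolution argument. Everything hinges on unfolding Ogura's formula \eqref{eq:lt_cbi}: inserting the definition \eqref{eq:def_Gx} of $\Gx$ and setting $\mu_t := B(A(\lambda)e^{-mt}) \ge 0$ gives the flow identity
\[
P_t e_\lambda(x) = e^{\piv(\mu_t)-\piv(\lambda)}\, e_{\mu_t}(x), \qquad t,x,\lambda\ge 0 .
\]
Since $A(\lambda)\in[0,1)$ and $B(0)=0$, letting $t\to\infty$ yields $\mu_t\to0$ and $P_t e_\lambda(x)\to e^{-\piv(\lambda)}$, uniformly in $x$, which singles out $e^{-\piv}$ as the candidate Laplace transform.

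First I would show that $\piv$ is a Bernstein function with $\piv(0)=0$ and derive \eqref{eq:lt_nup}. Writing $\phi(u)=u\big(b+\int_0^\infty e^{-ur}\overline{\mu}(r)\,dr\big)$ from \eqref{eq:def_Bernstein} and $u/\psi(u)=\int_0^\infty e^{-uy}W^{(1)}(y)\,dy$ from \eqref{eq:pot}, the quotient $\phi(u)/\psi(u)$ is a product of Laplace transforms, so by the convolution theorem it equals $\int_0^\infty e^{-uy}\kappa(y)\,dy$ with $\kappa$ precisely as in \eqref{eq:def_k}. As $\kappa\ge0$ (using $W^{(1)}>0$ from Lemma~\ref{eq:propW}), Fubini applied to $\piv(\lambda)=\int_0^\lambda\!\int_0^\infty e^{-uy}\kappa(y)\,dy\,du$ produces \eqref{eq:lt_nup}; evaluating at $\lambda=1$ shows $\kappa(r)r^{-1}dr$ is a genuine L\'evy measure, so $\piv$ is Bernstein with no drift, whence $e^{-\piv}$ is completely monotone with value $1$ at $0$ and is thus the Laplace transform of a unique infinitely divisible probability measure $\mathcal V$ on $\Rpo$. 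The bound $\int_0^\infty\kappa(r)\,dr=\lim_{u\downarrow0}\phi(u)/\psi(u)=\phi^{(1)}(0^+)/m<\infty$ follows by monotone convergence, the finiteness being guaranteed by the analyticity requirement \eqref{Analicity_of_the_mechanismsp}.

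For invariance, the flow identity gives $\mathcal V P_t e_\lambda=e^{\piv(\mu_t)-\piv(\lambda)}e^{-\piv(\mu_t)}=e^{-\piv(\lambda)}=\mathcal V e_\lambda$, so $\mathcal V P_t$ and $\mathcal V$ agree on $\Lambda$; as both are bounded functionals on $C_0(\Rpo)$ (because $\mathcal V$ is a probability measure and $P_t$ a contraction) and $\Lambda$ is dense, they agree everywhere, giving \eqref{eq:inv}. Uniqueness follows because any invariant probability measure $\mathcal V'$ satisfies $\int P_t e_\lambda\,d\mathcal V'=\int e_\lambda\,d\mathcal V'$ for all $t$; letting $t\to\infty$ under dominated convergence (integrands bounded by $1$) forces $\int e_\lambda\,d\mathcal V'=e^{-\piv(\lambda)}$, so $\mathcal V'=\mathcal V$ by injectivity of the Laplace transform.

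For part (2), I would read off the atom from $\mathcal V(\{0\})=\lim_{\lambda\to\infty}\int_0^\infty e^{-\lambda y}\mathcal V(dy)=e^{-\biv}$. The key step would be to differentiate: $y\,\mathcal V(dy)$ has Laplace transform $-\frac{d}{d\lambda}e^{-\piv(\lambda)}=\frac{\phi(\lambda)}{\psi(\lambda)}e^{-\piv(\lambda)}$, the product of the transforms of $\kappa(y)\,dy$ and of $\mathcal V$, hence the transform of the absolutely continuous measure $(\kappa*\mathcal V)(y)\,dy$, where $(\kappa*\mathcal V)(y)=\int_{[0,y]}\kappa(y-s)\,\mathcal V(ds)\in L^1(\R^+)$ since $\kappa\in L^1(\R^+)$ and $\mathcal V$ is a probability measure. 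Injectivity of the Laplace transform gives $y\,\mathcal V(dy)=(\kappa*\mathcal V)(y)\,dy$, so on $(0,\infty)$ one obtains $\mathcal V(dy)=\nu(y)\,dy$ with $\nu(y)=y^{-1}(\kappa*\mathcal V)(y)$ and $\int_0^\infty\nu(y)\,dy=\mathcal V((0,\infty))=1-e^{-\biv}<\infty$, which is \eqref{decomposition_of_Nu}. For the positivity claims: if $\phi\equiv0$ then $\kappa\equiv0$ and $\nu=0$ a.e.; otherwise $\kappa(y)>0$ for all $y>0$ (clear when $b>0$ as $\kappa\ge bW^{(1)}>0$, and when $b=0$ because $\overline{\mu}>0$ on some $(0,r_0)$ and $W^{(1)}>0$ make the convolution strictly positive), and since $0\in\mathrm{supp}(\mathcal V)$ (the drift of $\mathcal V$ is zero) one gets $(\kappa*\mathcal V)(y)\ge\int_{[0,y/2]}\kappa(y-s)\mathcal V(ds)>0$, so $\nu>0$ a.e. I expect the main subtlety to lie in the rigorous handling of the measure-valued convolution $\kappa*\mathcal V$---ensuring local finiteness and the Laplace factorization---and in the deduction that $y\,\mathcal V(dy)$ being absolutely continuous forces $\mathcal V$ to be so on $(0,\infty)$, a route that treats the infinite-L\'evy-mass case $\biv=\infty$ (no atom, $\nu$ a probability density) on the same footing as the compound-Poisson case $\biv<\infty$.
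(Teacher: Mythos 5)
Your proposal is correct, and while part (1) follows essentially the paper's own route, part (2) and the uniqueness argument are genuinely different. For part (1) you do what the paper does: identify $\phi(u)/\psi(u)$ as the Laplace transform of $\kappa$ via \eqref{eq:pot} and the convolution theorem, integrate (Tonelli) to get \eqref{eq:lt_nup}, bound $\int_0^\infty\kappa(r)dr=\phi^{(1)}(0)/m<\infty$ using analyticity, and verify invariance on $\Lambda$ through the flow identity coming from \eqref{eq:lt_cbi}. For uniqueness, however, the paper defers to Ogura's argument (\cite[Proposition 1.1]{Ogura-70}, an ODE satisfied by the Laplace transform), whereas you give a self-contained ergodic argument: let $t\to\infty$ in $\mathcal V' P_t e_\lambda=\mathcal V' e_\lambda$ and use dominated convergence. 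This works, though your parenthetical claim that $P_te_\lambda(x)\to e^{-\piv(\lambda)}$ \emph{uniformly} in $x$ is false (for fixed $t$ the function $x\mapsto e^{-\mu_t x}$ decays to $0$ at infinity); the convergence is only pointwise, which is all your dominated-convergence step actually needs, so the slip is harmless. For part (2) the paper splits into cases and cites external results: Sato's Theorem 27.7 for absolute continuity when $\biv=\infty$, Hudson--Tucker \cite{Hudson} for a.e.\ positivity, and Sato's Remark 27.3 for the compound Poisson case. You instead differentiate the Laplace transform to obtain $y\,\mathcal V(dy)=(\kappa*\mathcal V)(y)\,dy$, which immediately yields that $\mathcal V$ restricted to $\R^+$ is absolutely continuous with $\nu(y)=y^{-1}(\kappa*\mathcal V)(y)\in L^1(\R^+)$, reads off the atom $e^{-\biv}$ as $\lim_{\lambda\to\infty}\mathcal Ve_\lambda$, and gets positivity from $\kappa>0$ on $\R^+$ together with $\inf\mathrm{supp}(\mathcal V)=0$ (zero drift). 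This treats $\biv<\infty$ and $\biv=\infty$ on the same footing, avoids all three citations, and in fact anticipates the paper's own later technique: your identity is precisely the convolution equation \eqref{conv_eq_posinfdiv} that the paper derives in the proof of Proposition \ref{thm:regularity_nu} to study smoothness of $\nu$. What the paper's approach buys is brevity; what yours buys is a self-contained, unified argument whose by-product (the convolution equation) is reused downstream anyway.
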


\begin{remark}
If $(\overline{\psi}, \overline{\phi}) \in \Nb \times \Bb$ with $(\psi, \psi )= \mathcal E(\overline{\psi}, \overline{\phi})$ then  the measure $\mathcal{V}_{\theta}(dx) = e^{\theta x} {\mathcal V}(dx)$ is a $\overline{\phi}(\theta)$-stationary measure for the  \CBIb semigroup $\overline{P}$, in the sense that, for any $f \in C_0(\Rpo)$,
	\[
	{\mathcal V}_{\theta} \overline{P}_tf = e^{-\overline{\phi}(\theta)t}{\mathcal V}_{\theta} f.
	\]
\end{remark}

\begin{proof}  We shall prove that $\piv$ is a Bernstein function.
	By \eqref{eq:def_Bernstein} we have, for all $u>0$,
	\begin{equation}\label{eq:phi_over_psi}
	\frac{\phi(u)}{\psi(u)} = \frac{u}{\psi(u)}\left(\int_0^\infty e^{-ur}\left( b\delta_0(dr)+\overline{\mu}(r)dr\right)\right).
	\end{equation}
Thus, using the relation \eqref{eq:pot}, by convolution and injectivity of the Laplace transform, we get
	\begin{equation}\label{eq:phi_over_psi}
	\frac{\phi(u)}{\psi(u)} = \int_0^{\infty}e^{-u r}\kappa(r)dr
	\end{equation}
	where $\kappa$ is defined in \eqref{eq:def_k}. It is clear that $\kappa (r) \geq 0$, for all $r>0$ and since $\phi, \psi \in \mathcal{H}(R)$ for some $R>0$, with $\phi(0)=\psi(0)=0$, $m >0$ one has $\lim_{u \downarrow 0} \frac{\phi(u)}{\psi(u)} =\frac{\phi^{(1)}(0)}{m } <\infty$ and hence $\int_0^\infty \kappa(r)dr <\infty$.
Now, integrating \eqref{eq:phi_over_psi} yields, for all $\lambda\geq 0$,
	\begin{eqnarray*}
		\piv(\lambda)= \int_{0}^{\lambda} \frac{\phi(u)}{\psi(u)}du=  \int_0^{\infty}(1-e^{-\lambda r}) \frac{\kappa(r)}{r}dr,
	\end{eqnarray*}
and,  one can check that   $
\int_0^\infty (1 \wedge r)\frac{\kappa(r)}{r} dr \leq \int_0^\infty \kappa(r)dr <\infty$. Thus,  
there exists an infinitely divisible measure $\mathcal V$ on $\Rpo$ such that, for all $\lambda \geq 0$,
\begin{equation} \label{eq:lt_nu}
\mathcal{V} e_{\lambda} = e^{-\piv(\lambda)}.
\end{equation}
 Since 
$\piv(0)=0$, the measure $\mathcal V$ is a probability measure.
Now, from \eqref{eq:lt_cbi}, we have for all $t,\lambda \geq 0$,
\[
{\mathcal V}P_te_{\lambda} = e^{-\piv(\lambda) }e^{\piv(B(A(\lambda) e^{-mt})) } {\mathcal V} e_{B(A(\lambda) e^{-mt})} =	
	 {\mathcal V}e_{\lambda}.
\]
Since, by the Stone-Weistrass Theorem, $\Lambda={\rm Span}(e_\lambda)_{\lambda>0}$ is dense in $C_0(\Rpo)$, this proves \eqref{eq:inv}, i.e.~that $\mathcal{V}$ is an invariant probability measure for $P$. One obtains that it is unique by showing, from \eqref{eq:inv}, that its Laplace transform is the unique solution to  some ordinary differential equation with initial condition and invoking injectivity property of the Laplace transform. We refer the reader to Ogura \cite[Proposition 1.1]{Ogura-70} for more details.
Finally we see, from Sato \cite[Theorem 27.7]{Sato-99},  that
a sufficient condition for  $\mathcal V$ to be absolutely continuous is $\int_0^{\infty}\frac{\kappa(r)}{r}dr=\infty $,
that is, 
$\biv =\piv(\infty)=\infty$. Moreover, in this case, we have from \cite[Theorem 1]{Hudson} that a.e.~$\nu>0$.
 Assume now that $\int_0^{\infty}\frac{\kappa(r)}{r}dr  < \infty $. Then $\mathcal V$ is a compound Poisson distribution associated to the L\'evy measure $\frac{\kappa(r)}{r}dr$. If $\phi\equiv 0$, then $\kappa\equiv 0$, which implies $\mathcal V(d y)=\delta_0(d y)$ and so $\nu=0$ a.e. Otherwise, when $b>0$ or $0<\bar{\mu}(0^+)\leq \infty$, then from    \eqref{eq:def_k} combined with  $W^{(1)}>0$ (see Lemma \ref{eq:propW}), we note that for all $r>0$, $\frac{\kappa(r)}{r} > 0$.   Then, by means of  \cite[Remark 27.3]{Sato-99} (with $t=1$ and recalling that therein the notation $\nu^{*0}= \delta_0$ is used), and using the fact that in our case the L\'evy measure has a density, we conclude that there exists some positive density $\nu$ on $\R^+$ such that $\mathcal V(dy) = e^{-\biv}\delta_0(dy) + \nu(y)dy$.  
\end{proof}
We proceed by giving  (a necessary and) sufficient conditions for the absolute continuity of the invariant measure, that is, conditions  for $\biv=\infty$ in \eqref{decomposition_of_Nu}.

\begin{lemma} \label{elm:abs}
If  $\phi\equiv 0$, i.e.~$P$ is a \CB semigroup, then $\biv=0$. Otherwise, we have   $\biv<\infty$ (resp.~$\biv=\infty$)  if and only if   $\int_0^1\frac{\kappa(r)}{r}dr <\infty$ (resp.~$\int_0^1\frac{\kappa(r)}{r}dr =\infty$).  A sufficient condition  for $\biv=\infty$ is $\underline{\kappa}(0^+)=\liminf_{r\downarrow 0} \kappa(r)>0$. 
 \end{lemma}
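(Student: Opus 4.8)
The plan is to read off everything from the L\'evy--Khintchine representation of the Bernstein function $\piv$ established in Proposition~\ref{prop:invariant_measure}, namely
\[
\piv(\lambda) = \int_0^{\infty}(1-e^{-\lambda r})\frac{\kappa(r)}{r}dr, \qquad \lambda\geq 0,
\]
together with the integrability $\int_0^{\infty}\kappa(r)dr<\infty$ recorded there. Since the integrand is non-negative and, for each fixed $r>0$, the map $\lambda\mapsto 1-e^{-\lambda r}$ increases to $1$ as $\lambda\to\infty$, the monotone convergence theorem immediately yields the backbone identity
\[
\biv = \lim_{\lambda\to\infty}\piv(\lambda) = \int_0^{\infty}\frac{\kappa(r)}{r}dr \in [0,\infty].
\]
All remaining steps simply analyse when the right-hand side is finite; the only point requiring a word of justification is this interchange of limit and integral, which is legitimate precisely because $\kappa\geq 0$.

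Granting this identity, I would first dispose of the CB case: if $\phi\equiv 0$ then, by the defining formula \eqref{eq:def_k}, one has $b=0$ and $\overline{\mu}\equiv 0$, so that $\kappa\equiv 0$ and hence $\biv=0$. For the general case $\phi\not\equiv 0$, I would split the integral at $1$ and observe that the tail is always harmless, since $r\geq 1$ on $[1,\infty)$ gives
\[
\int_1^{\infty}\frac{\kappa(r)}{r}dr \leq \int_1^{\infty}\kappa(r)dr \leq \int_0^{\infty}\kappa(r)dr <\infty,
\]
using the finiteness of $\int_0^{\infty}\kappa(r)dr$ from Proposition~\ref{prop:invariant_measure}. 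Therefore $\biv$ is finite if and only if the singular part $\int_0^1\frac{\kappa(r)}{r}dr$ is finite, which is exactly the claimed equivalence (and the resp.~statement follows by negation).

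Finally, for the sufficient condition, I would convert the hypothesis $\underline{\kappa}(0^+)=\liminf_{r\downarrow 0}\kappa(r)>0$ into a pointwise lower bound near the origin: there exist $\varepsilon>0$ and $\delta\in(0,1]$ such that $\kappa(r)\geq\varepsilon$ for all $r\in(0,\delta)$. Then
\[
\int_0^1\frac{\kappa(r)}{r}dr \geq \varepsilon\int_0^{\delta}\frac{dr}{r}=\infty,
\]
so the equivalence just proved forces $\biv=\infty$. I do not anticipate any genuine obstacle in this argument: once the representation of $\piv$ and the integrability of $\kappa$ from Proposition~\ref{prop:invariant_measure} are in hand, everything reduces to monotone convergence and elementary one-dimensional estimates, the mildest subtlety being the interchange of limit and integral noted above.
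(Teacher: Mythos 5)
Your proof is correct and takes essentially the same route as the paper: the paper's own proof merely declares the first claim obvious and the equivalence ``easily deduced'' from the proof of Proposition \ref{prop:invariant_measure}, which amounts exactly to the monotone-convergence identification $\biv=\int_0^\infty \frac{\kappa(r)}{r}\,dr$ and the tail bound $\int_1^\infty \frac{\kappa(r)}{r}\,dr\leq\int_0^\infty\kappa(r)\,dr<\infty$ that you spell out. Your argument for the sufficient condition (a pointwise bound $\kappa(r)\geq\varepsilon$ near $0$ forcing $\int_0^1\frac{\kappa(r)}{r}\,dr=\infty$) is verbatim the paper's.
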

 \begin{proof}
 The first claim is obvious and the necessary and sufficient condition can be easily deduced from the proof of Proposition \ref{prop:invariant_measure}. Next, assume that $\underline{\kappa}(0^+)>0$, then there exists $C,\epsilon>0$ such that for small $0<r<\epsilon$, $\kappa(r)>C$ which implies that $\int_0^{1}\frac{\kappa(r)}{r}dr \geq C\int_0^{\epsilon}\frac{dr}{r} =\infty$ and completes the proof of the Lemma.
 \end{proof}

\begin{cor}\label{corol:omega}
Let $\psi\in\mathcal N$. Then there exists a proper probability density function $\omega$ on $\mathbb R^+$ such that
\begin{equation*}
\int_0^\infty e^{-\lambda y} \omega(y) d y = 1-A(\lambda), \quad \lambda\geq 0,
\end{equation*}
where we recall that $A(\lambda)=\exp \left( -m\int_{\lambda}^{\infty}\frac{d u}{\psi(u)} \right)$.
\end{cor}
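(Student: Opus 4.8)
The plan is to realise $1-A$ as the Laplace transform of a normalised invariant density produced by Proposition \ref{prop:invariant_measure}, applied to an auxiliary immigration mechanism built from $\psi$ itself. Concretely, I would set $\phi := \psi^{(1)}-m$. Differentiating \eqref{eq:def_psi} gives $\phi(u)=2\sigma^2 u+\int_0^\infty r\left(1-e^{-ur}\right)\Pi(dr)$, so that $\phi$ is a Bernstein function with drift $2\sigma^2$ and L\'evy measure $r\,\Pi(dr)$; its integrability condition $\int_0^\infty(1\wedge r)\,r\,\Pi(dr)=\int_0^\infty(r\wedge r^2)\,\Pi(dr)<\infty$ is exactly the one recorded after \eqref{eq:standing_assumption}, and $\phi$ inherits holomorphy on $D(0,R_\psi)$ from $\psi$. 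Moreover $\phi\not\equiv 0$, since $\psi(u)=mu$ would contradict the first requirement in \eqref{eq:standing_assumption}. Hence $(\psi,\phi)\in\Ne\times\Be$ and Proposition \ref{prop:invariant_measure} is at our disposal.

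By that proposition the CBI$(\psi,\phi)$-semigroup admits an invariant probability measure $\mathcal V$ with $\mathcal V e_\lambda=e^{-\piv(\lambda)}$ and $\piv(\lambda)=\int_0^\lambda\frac{\phi(u)}{\psi(u)}\,du$. I would first verify $\biv=\piv(\infty)=\infty$: writing $\int_1^\lambda\frac{\phi(u)}{\psi(u)}\,du=\ln\frac{\psi(\lambda)}{\psi(1)}-m\int_1^\lambda\frac{du}{\psi(u)}$, the first term tends to $+\infty$ because $\psi(\infty)=\infty$, while the second converges by \eqref{eq:standing_assumption}. Thus, by the second part of Proposition \ref{prop:invariant_measure}, $\mathcal V$ is absolutely continuous, $\mathcal V(dy)=\nu(y)\,dy$ with $\nu\ge 0$ and $\int_0^\infty\nu=1$, and its Laplace transform equals $e^{-\piv(\lambda)}$.

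The crux is the identity $A^{(1)}=K\,\widehat\nu$ for some constant $K>0$, where $\widehat\nu$ denotes the Laplace transform of $\nu$; this is what turns the \emph{a priori} non completely monotone (indeed increasing) function $A$ into something tractable. From \eqref{def_A} one has $A^{(1)}=\frac{m}{\psi}A>0$, whence $\left(\ln A^{(1)}\right)'=\frac{A^{(1)}}{A}-\frac{\psi^{(1)}}{\psi}=\frac{m}{\psi}-\frac{\psi^{(1)}}{\psi}$; on the other hand $\left(\ln\widehat\nu\right)'=-\piv^{(1)}=-\frac{\phi}{\psi}=\frac{m}{\psi}-\frac{\psi^{(1)}}{\psi}$. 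Since the two logarithmic derivatives coincide on $\mathbb R^+$, $A^{(1)}$ and $\widehat\nu$ differ by a positive multiplicative constant $K$, and $A^{(1)}=K\,\widehat\nu$ exhibits $A^{(1)}$ as the Laplace transform of the non-negative function $K\nu$.

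It then remains to divide by $y$. Setting $\omega(y):=K\,\nu(y)/y\ge 0$ and using the elementary identity $\frac{e^{-\lambda y}}{y}=\int_\lambda^\infty e^{-sy}\,ds$ together with Tonelli's theorem, I obtain $\int_0^\infty e^{-\lambda y}\omega(y)\,dy=K\int_\lambda^\infty\widehat\nu(s)\,ds=\int_\lambda^\infty A^{(1)}(s)\,ds=A(\infty)-A(\lambda)=1-A(\lambda)$, the last step using $A(\infty)=1$. All the integrals converge because $\int_0^\infty A^{(1)}(s)\,ds=A(\infty)-A(0)=1<\infty$ (recall $A(0)=0$), and evaluating at $\lambda=0$ gives $\int_0^\infty\omega=1-A(0)=1$, so $\omega$ is a proper probability density. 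The only place genuine structure enters is the pair of facts that $\mathcal V$ is absolutely continuous and that $A^{(1)}$ is a constant multiple of $\widehat\nu$; I expect everything downstream to be routine Laplace-transform manipulation, the main obstacle being the correct identification of the auxiliary mechanism $\phi=\psi^{(1)}-m$ that makes $\piv^{(1)}$ align with $\left(\ln A^{(1)}\right)'$.
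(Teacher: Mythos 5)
Your proof is correct and follows essentially the same route as the paper's: both apply Proposition \ref{prop:invariant_measure} to the auxiliary pair $(\psi,\psi^{(1)}-m)$, identify $A^{(1)}$ (up to the constant $A^{(1)}(0)$) as the Laplace transform of the resulting invariant density via the relation $\psi A^{(1)}=mA$, verify absolute continuity through the divergence of $\int_0^\infty\frac{\psi^{(1)}(u)-m}{\psi(u)}\,du$, and then produce $\omega(y)\propto \underline{\nu}(y)/y$ by a Tonelli argument using $A(0)=0$ and $A(\infty)=1$. The only differences are cosmetic (logarithmic derivatives versus the paper's explicit ODE integration, and integrating $A^{(1)}$ over $(\lambda,\infty)$ rather than $(0,\lambda)$).
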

\begin{proof}
The function $A$ satisfies the differential equation $\psi(\lambda)A^{(1)}(\lambda)=m A(\lambda)$. Differentiating on both sides and rearranging gives $A^{(2)}(\lambda)=-\frac{\psi^{(1)}(\lambda)-m}{\psi(\lambda)}A^{(1)}(\lambda)$, which leads to
\begin{equation*}
A^{(1)}(\lambda)=A^{(1)}(0)  \exp \left( - \int_0^\lambda \frac{\psi^{(1)}(u)-m}{\psi(u)} d u  \right).
\end{equation*}
Since $(\psi,\psi^{(1)}-m)\in {\Ne} \times \Be$,  Proposition \ref{prop:invariant_measure}  yields that there exists a probability measure $ \underline{\mathcal{V}}$ on $\Rpo$ such that
\begin{equation}\label{eq:Al}
\int_0^{\infty}e^{-\lambda y}\underline{\mathcal{V}}(d y ) = \frac{A^{(1)}(\lambda)}{A^{(1)}(0)}.
 \end{equation}
 Moreover, by assumption, $\lim_{u\to\infty}\psi(u)=\infty$ and $\int_1^\infty \frac{du}{\psi(u)}  <\infty$, which yields
\begin{equation*}
 \int_0^\infty \frac{\psi^{(1)}(u)-m}{\psi(u)} d u = \int_0^1 \frac{\psi^{(1)}(u)-m}{\psi(u)} d u + \left[\log \psi(u)\right]_{u=1}^\infty - m \int_1^\infty \frac{du}{\psi(u)}  = \infty.
\end{equation*}
This  implies by Proposition \ref{prop:invariant_measure} that $\underline{\mathcal{V}}$ is absolutely continuous on $\Rpo$ and we denote its probability density function by $\underline{\nu}$.
Since, from \eqref{theta_positive}, $\lim_{\lambda \to 0}\int_{\lambda}^{\infty}\frac{d u}{\psi(u)}=\infty$, we get $A(0)=0$ and thus it follows by Tonelli Theorem,
\begin{equation*}
\begin{split}
1- A(\lambda) = 1- \int_0^\lambda A^{(1)}(u)d u = & 1- A^{(1)}(0)\int_0^\lambda \left( \int_0^\infty e^{-u y} \underline{\nu}( y ) d y \right) d u \\
 = & 1- A^{(1)}(0) \int_0^\infty \left( 1- e^{-\lambda y} \right) \frac{\underline{\nu}(y)}{y} d y.
\end{split}
\end{equation*}
Because $\lim_{\lambda\to\infty}A(\lambda)=1$, we therefore must have
\begin{equation} \label{eq:Ao}
A^{(1)}(0) \int_0^\infty \frac{\underline{\nu}(y)}{y} d y =1
\end{equation} and hence the corollary has been proved with
\begin{equation*}
\omega(y) = \frac{ \underline{\nu}(y)}{ y \int_0^\infty \frac{\underline{\nu}(r)}{r} d r }.
\end{equation*}
\end{proof}

\begin{lemma} \label{Analicity_of_A_B_F} First, we have  $\psi \in \mathcal{H}(R)$ (resp.~$\phi \in \mathcal{H}(R)$) if and only if $\int^\infty e^{u r}\Pi(dr)<\infty$ (resp.~$\int^\infty e^{u r}\mu(dr)<\infty$) for all $u<R$.  Next, let $(\psi, \phi) \in \Ne\times \Be$. Then $R_{\psi}, R_\phi >0$ and $\piv \in \mathcal{H}(R_{\piv})$ with $\piv(0) =0$ and $R_{\piv}=R_{\psi}\wedge R_{\phi}\wedge \underline \theta$  where  $ \underline \theta =\inf\{u > 0; \: \psi(-u) =0\}\in (0,\infty]$.  
Similarly $A \in \mathcal{H}(R_A)$ with $R_A=R_\psi\wedge\underline \theta$, $A(0)=0$ and  $A^{(1)}(\lambda)>0$ for any $\lambda>-R_A$. As a by-product,  $B \in \mathcal{H}(R_B)$  with $0<R_B\leq 1$  and $B(0) =0$. Finally, there exists some $0<R_0\leq 1$, such that, for all $x\geq 0$, $\Gx \in \mathcal{H}(R_0)$.
\end{lemma}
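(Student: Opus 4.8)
The plan is to establish the five assertions in order, the engine throughout being the factorisation $\psi(z)=z\phi_p(z)$ recorded in \eqref{eq:whl}. For the opening equivalence I would isolate the integral term of \eqref{eq:def_psi}: splitting the range at $r=1$, the contribution of $(0,1)$ is entire since $|e^{-zr}-1+zr|={\rm O}(|z|^2r^2)$ locally uniformly and $\int_0^1 r^2\,\Pi(dr)<\infty$, and on $[1,\infty)$ the terms $-1+zr$ are entire (using $\int_1^\infty r\,\Pi(dr)<\infty$), so the only genuine issue is the Laplace transform $z\mapsto\int_1^\infty e^{-zr}\,\Pi(dr)$. By the classical description of the abscissa of convergence of a Laplace transform — equivalently the equivalence quoted from Sato \cite[Theorem 25.17]{Sato-99} in the text — this transform is finite and holomorphic on $\{\Re z>-R\}$ precisely when $\int^\infty e^{ur}\,\Pi(dr)<\infty$ for every $u<R$, and the identical argument applies to $\phi$ and $\mu$. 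Since membership in $\Ne\times\Be$ builds in \eqref{eq:standing_assumption} and \eqref{Analicity_of_the_mechanismsp}, this at once yields $R_\psi,R_\phi>0$.

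The heart of the matter is locating the zeros of $\psi$. Since $\phi_p$ is a Bernstein function, writing $z=x+iy$ and using $\Re(1-e^{-zr})=1-e^{-xr}\cos(yr)\geq 1-e^{-xr}$ together with $\PP\geq0$ gives the pointwise bound $\Re\,\phi_p(z)\geq\phi_p(\Re z)$. As $\phi_p$ is real and strictly increasing on the reals with $\phi_p(0)=m>0$, and as $\underline\theta$ is exactly the point where $\phi_p(-\underline\theta)=0$ (recall $\psi(-u)=-u\phi_p(-u)$, so $\underline\theta>0$ by continuity), it follows that $\Re\,\phi_p(z)\geq\phi_p(\Re z)>0$ whenever $\Re z>-\underline\theta$; hence $\psi$ has no zero besides $z=0$ in that half-plane, and in particular none in $D(0,\underline\theta)\setminus\{0\}$. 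Consequently $\phi/\psi$ — whose singularity at $0$ is removable because $\phi(z)/z\to\phi^{(1)}(0)$ and $\psi(z)/z\to m$ — is holomorphic on $D(0,R_\psi\wedge R_\phi\wedge\underline\theta)$, and integrating from $0$ produces $\piv$ with $\piv(0)=0$ holomorphic there. For $A$ I would work with $A^{(1)}/A=m/\psi$ and split $m/\psi(z)=1/z+g(z)$, where $g$ is holomorphic on $D(0,R_\psi\wedge\underline\theta)$ because the residue of $m/\psi$ at $0$ equals $m/\phi_p(0)=1$; integrating gives $A(z)=c\,z\exp\bigl(\int_0^z g\bigr)$, holomorphic with $A(0)=0$ and $A^{(1)}(0)=c>0$, while the positivity $A^{(1)}(\lambda)>0$ for $\lambda>-R_A$ follows from the Laplace-transform representation of $A^{(1)}/A^{(1)}(0)$ in Corollary \ref{corol:omega}.

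The function $B$ then comes from the holomorphic inverse function theorem: $A$ is holomorphic near $0$ with $A(0)=0$ and $A^{(1)}(0)\neq0$, so a local holomorphic inverse $B$ exists with $B(0)=0$ and $R_B>0$, and $R_B\leq1$ because $A$ maps $[0,\infty)$ onto $[0,1)$, forcing $B(z)\to\infty$ as $z\uparrow1$ so that $1$ is a singularity of $B$. For $\Gx$, note that $z\mapsto\piv(B(z))$ is holomorphic wherever $|B(z)|<R_{\piv}$; since $B(0)=0$ and $B$ is continuous, one can fix a single radius $0<R_0\leq R_B$, independent of $x$, on which $\sup_{|z|\leq R_0}|B(z)|<R_{\piv}$. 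On $D(0,R_0)$ the exponent $-xB(z)+\piv(B(z))$ is then holomorphic for every $x\geq0$ — the $x$-dependent piece $-xB(z)$ being holomorphic on all of $D(0,R_B)$ irrespective of $x$ — so $\Gx=\exp(-xB+\piv\circ B)\in\mathcal{H}(R_0)$ with $0<R_0\leq R_B\leq1$, as claimed.

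The step I expect to be the real obstacle is upgrading these holomorphy statements on the open disks to the \emph{exact} radii of convergence. The bound $\Re\,\phi_p(z)\geq\phi_p(\Re z)$ cleanly excludes zeros of $\psi$ inside $D(0,\underline\theta)$, and a Pringsheim-type argument (the Taylor coefficients of $\psi$ and $\phi$ at $0$ have eventually constant sign, so their nearest singularities sit at $-R_\psi$ and $-R_\phi$ on the negative real axis) places all three candidate obstructions — at $-R_\psi$, $-R_\phi$ and $-\underline\theta$ — on the negative axis; but certifying that the minimal one genuinely survives in the quotient $\phi/\psi$ (no cancellation between a singularity of a factor and a zero of $\psi$) is where the careful case analysis lies, and this is what pins $R_{\piv}$, $R_A$ and $R_B$ down to the stated values rather than merely bounding them below.
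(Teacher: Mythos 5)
Your proof is correct and reaches every assertion of the lemma, but at the central step it runs on a genuinely different engine from the paper's. The paper locates the possible singularities of $z\mapsto z/\psi(z)$ inside $D(0,R_\psi)$ by observing, via \eqref{eq:pot} in Lemma \ref{eq:propW}, that this function is the Laplace transform of the non-negative function $W^{(1)}$, and then invoking the classical Landau/Pringsheim theorem for Laplace transforms (the first singularity lies on the real axis at the abscissa of convergence): inside $D(0,R_\psi)$ that singularity can only be a negative real zero of $\psi$, namely $-\underline\theta$. You instead prove the pointwise bound $\Re\,\phi_p(z)\geq\phi_p(\Re z)$ directly from the Bernstein representation of $\phi_p$ in \eqref{eq:whl}, which excludes zeros of $\psi$ other than $0$ in the whole half-plane $\Re z>-\underline\theta$. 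Both are sound; yours is more elementary and self-contained (it does not need the scale function), and gives zero-freeness on a half-plane rather than a disk, while the paper's argument recycles objects ($W$, Corollary \ref{corol:omega}) needed elsewhere and simultaneously covers the case where the nearest obstruction is a singularity of $\psi$ itself rather than a zero. The remaining steps essentially coincide: both proofs treat $A$ by integrating the logarithmic derivative $A^{(1)}/A=m/\psi$ (the paper through the formula for $A^{(1)}$ in the proof of Corollary \ref{corol:omega}, you by splitting off the simple pole $1/z$ with residue $1$), both take the positivity of $A^{(1)}$ on $(-R_A,\infty)$ from Corollary \ref{corol:omega}, both get $B$ from Lagrange inversion/the holomorphic inverse function theorem with $R_B\leq 1$ forced by $A(\lambda)\uparrow 1$, and your explicit uniform choice of $R_0$ with $\sup_{|z|\leq R_0}|B(z)|<R_{\piv}$ is exactly what the paper's ``follows readily from the previous ones'' compresses.

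Concerning the worry in your final paragraph: this is not actually an obstacle, either for you or for the paper. The paper's own proof establishes only holomorphy on the stated disks, i.e.\ lower bounds on the radii; the equalities $R_{\piv}=R_\psi\wedge R_\phi\wedge\underline\theta$ and $R_A=R_\psi\wedge\underline\theta$ are a mild abuse of the convention defining $\mathcal{H}(R)$, and nothing downstream (the definitions of $T_0$, $\bar\lambda_t$, $\mathcal D_t$, or the bound \eqref{bound_lapl_abs_Vn} in Proposition \ref{prop:nu_n}) uses more than these lower bounds. If one did insist on exactness, the cancellation you fear can be ruled out structurally rather than by case analysis: a non-trivial Bernstein function is strictly negative on the negative real axis, so $\phi$ cannot cancel the zero of $\psi$ at $-\underline\theta$; more decisively, $\phi/\psi$ is itself the Laplace transform of the non-negative function $\kappa$ (the identity $\phi(u)/\psi(u)=\int_0^\infty e^{-ur}\kappa(r)\,dr$ from the proof of Proposition \ref{prop:invariant_measure}), so the same Landau theorem places its first singularity on the negative real axis at minus its radius of convergence, which Tonelli's theorem then identifies with the stated minimum. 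But none of this appears in the paper, and none of it is needed.
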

\begin{proof}
 The first claim follows readily from \cite[Theorem 25.17]{Sato-99}. Next, under the assumptions \eqref{eq:standing_assumption},  we have $\psi \in \mathcal{H}(R_{\psi})$ with $ \psi(0)= \phi(0)=0$ and $ \psi^{(1)}(0)=m >0$. Moreover, as from Lemma \ref{eq:propW}, $u\mapsto \frac{u}{\psi(u)}$ is the Laplace transform of a positive measure, by a standard result on the Laplace transform, its first singularity, as a function of the complex variable, occurs on the (negative) real line. Since $\psi(0)=0$, $\psi^{(1)}(0)=m>0$ and $\psi \in \mathcal{H}(R_\psi)$,  we deduce that the first singularity of $z\mapsto \frac{z}{\psi(z)}$  in the disc $\{z\in \C; |z|<R_{\psi}\}$ can  be only, if it exists, a  zero $-\underline \theta<0$ of $\psi$, which is isolated from $0$. Thus,  $\frac{z}{\psi(z)} \in\mathcal{H}(R_{\psi} \wedge \underline \theta)$, and, since   $\phi \in \mathcal{H}(R_{\phi})$, we conclude that $\piv \in \mathcal{H}(R_{\piv})$.
  From the proof of Corollary \ref{corol:omega}, we easily get, by combining \eqref{eq:Ao} with \eqref{eq:Al}, that $A^{(1)}\in\mathcal{H}(R_A)$ with  $R_A=R_\psi\wedge\underline \theta>0$ and $ A^{(1)}(\lambda) > 0$ for any $\lambda>-R_A$. Therefore  $A\in\mathcal{H}(R_\psi\wedge\underline \theta)$ as well.	Then by the Lagrange inversion theorem, the inverse function $B$ of $A$ belongs to $\mathcal{H}(R_B)$ with $R_B>0$ and satisfies $B(0) = 0$ and $B^{(1)}(0) \neq 0$. Finally, $R_B\leq 1$ as $\lim_{\lambda \to \infty}A(\lambda)=\lim_{\lambda \to \infty}e^{-m\int_\lambda^\infty \frac{du }{\psi(u)}}=1$. The last statement follows readily from the previous ones.
  \end{proof}

\begin{remark}\label{remark_determ_T0}
Our results provide the smoothness of the transition kernel for $t>T_0$. Looking at \eqref{def:t0} and \eqref{eq:def_Gx}, we see that in order to determine $T_0$, we need to know  $R_B$, the radius of convergence of the Taylor series at $0$ of $B$. Though we know by Lemma \ref{Analicity_of_A_B_F} that $0<R_B\leq 1$, it would be interesting to find the precise value of $R_B$ when $A$ can not be inverted explicitly.   
 In order to get an idea of the value of $T_0=-\ln(R_0)/m$ in specific examples, one can instead proceed by numerically computing $R_0$, the radius of convergence of the power series \eqref{eq:def-Ln_statement}. Note that in \cite{CLP-F} we have given an algorithm   for computing the  eigenfunctions $\mathcal L_n(x)$.
\end{remark}

With the aim of studying regularity properties of the  heat kernel of  CBI-semigroups, we  need to deepen significantly the analysis concerning  fine distributional properties of the invariant measure and in particular derive smoothness properties of  its absolutely continuous part. We provide both a smoothness result on $\R$ as well as on $\R^+$. The question of how smooth infinitely divisible distributions are on $\mathbb R$  has been well-studied, see Section 28 of \cite{Sato-99} for an overview.  However for distributions with support on $\mathbb R^+$, the approaches developed in the literature are limited  to the case when the density (or its derivatives) vanishes at $0$
and to the best of our knowledge, we are not aware of any available techniques in the literature to deal with the smoothness on $\R^+$. To this purpose, we shall 
 derive a convolution equation that the absolutely continuous part of the invariant measure satisfies and then apply the two lemmas in Section \ref{sec_conv} to establish its degree of smoothness. The same technique will be used  again in Lemma \ref{lem_smooth_mathcalW} to derive smoothness properties of the function $\mathcal W_n$.   The next results 
  can be seen as a significant complement of the previous works on the study of invariant measures of CBI-semigroups.

\begin{prop}\label{thm:regularity_nu}
Recall from \eqref{decomposition_of_Nu} the notation $\mathcal V(d y) = e^{-\biv}\delta_0(d y) +\nu(y) d y, y>0$. We extend $\nu:\mathbb R^+\to\mathbb R$ to $\mathbb R$ by setting $\nu(y)=0$ for $y\leq 0$. Then, the density $\nu$ can be chosen such that it satisfies the following   smoothness properties.
\begin{enumerate}[(a)]
    \item \label{it:rR}If $\Si\in [1,\infty]$, then $\nu \in C^{\Si-1}(\mathbb R)$ where we recall that $\Si$ is defined in \eqref{def:kappa_bar}.
    \item  Assume $\underline{\kappa}<\infty$.
    \begin{enumerate}[(b1)]
    \item \label{it:rp} If $\Si\geq 1$, then $\nu\in C^{\Si}(\R^+)$ with $\nu^{(\Si)}\in L^1(\R^+)$  and if $\Si=0$  and $\overline{\kappa}(0^+)<\infty$, then $\nu \in C(\R^+) \cap L^1(\R^+)$.

    \item \label{it:r2} If  $\underline{\kappa}(0^+)=\overline{\kappa}(0^+)$, $\kappa\in C^q(\R^+)$ for some $q\geq 1$  and $\kappa^{(1)}\in L^1_{loc}(\R^+)$, then $\nu\in C^{\Si+q}(\R^+)$.
    \end{enumerate}
        \end{enumerate}
\end{prop}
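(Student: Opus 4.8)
The plan is to reduce all four statements to the analysis of a single convolution equation satisfied by $\nu$, to which the two lemmas of Section~\ref{sec_conv} apply. First I would derive that equation. Differentiating $\mathcal V e_\lambda=e^{-\piv(\lambda)}$ (see \eqref{eq:lt_nu}) in $\lambda$ — justified since $\mathcal V$ has finite mean $\piv^{(1)}(0)=\phi^{(1)}(0)/m$ — and inserting $\piv^{(1)}(\lambda)=\phi(\lambda)/\psi(\lambda)=\int_0^\infty e^{-\lambda r}\kappa(r)\,dr$ from \eqref{eq:phi_over_psi}, one gets an identity between Laplace transforms whose right-hand side factors as the transform of a convolution; inverting by injectivity yields
\begin{equation}\label{eq:nu_conv}
y\,\nu(y)=e^{-\biv}\kappa(y)+(\kappa\star\nu)(y),\qquad y>0.
\end{equation}

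Since $y\mapsto 1/y$ is $C^\infty$ on $\Rpo=(0,\infty)$, on the open half-line $\nu$ is exactly as smooth as the right-hand side of \eqref{eq:nu_conv}, so the whole problem reduces to the regularity of $\kappa\star\nu$. Two observations drive the argument. Whenever $\underline{\kappa}(0^+)>0$ — in particular whenever $\Si\geq 1$, and also when $\Si=0$ — Lemma~\ref{elm:abs} gives $\biv=\infty$, so the atom term in \eqref{eq:nu_conv} drops out, leaving the pure convolution equation $y\,\nu(y)=(\kappa\star\nu)(y)$. The case $\Si=0$, $\overline{\kappa}(0^+)<\infty$ of part~\ref{it:rp} is then immediate: here $\kappa$ is locally bounded, so $\kappa\star\nu$, the convolution of an $L^\infty_{loc}$ function with $\nu\in L^1(\R^+)$ (Proposition~\ref{prop:invariant_measure}), is continuous, whence $\nu=\tfrac1y(\kappa\star\nu)\in C(\R^+)\cap L^1(\R^+)$.

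The heart of the matter, and the source of the order $\Si$, is the boundary behaviour at the origin. For $\Si\geq 1$ I would prove by induction on $i=0,1,\dots,\Si-1$ that $\nu\in C^i(\R^+)$, that $\nu^{(i)}\in L^1(\R^+)$ (this last via the explicit convolution form of the derivative and Young's inequality), and crucially that $\nu^{(j)}(0^+)=0$ for $j\leq i$. The vanishing of the lower-order boundary values lets one transfer every derivative onto $\nu$ in the convolution: Lemma~\ref{lem_conv_diff_onederiv}, whose boundary term $f(0^+)g$ disappears precisely because $\nu^{(j)}(0^+)=0$, gives $(\kappa\star\nu)^{(i)}=\kappa\star\nu^{(i)}$, so that differentiating \eqref{eq:nu_conv} $i$ times produces $y\,\nu^{(i)}(y)+i\,\nu^{(i-1)}(y)=(\kappa\star\nu^{(i)})(y)$. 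Using $\kappa(y-r)\geq \underline{\kappa}(0^+)-\varepsilon$ near $0$, this becomes a Gronwall/Volterra estimate with gain exponent $\underline{\kappa}(0^+)-i$, which exceeds $1$ exactly when $i\leq \Si-1$ (as $\Si=\lceil\underline{\kappa}(0^+)\rceil-1$ forces $\underline{\kappa}(0^+)>i+1$ there); this both yields continuity of $\nu^{(i)}$ and forces $\nu^{(i)}(0^+)=0$. With all derivatives up to order $\Si-1$ vanishing at $0^+$, the extension of $\nu$ by $0$ lies in $C^{\Si-1}(\R)$, which is part~\ref{it:rR}; one further differentiation upgrades the regularity on $\R^+$ to $\nu\in C^\Si(\R^+)$ with $\nu^{(\Si)}\in L^1(\R^+)$, proving part~\ref{it:rp}. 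Here $\nu^{(\Si)}(0^+)$ need not vanish and $\nu^{(\Si)}$ may even be unbounded near $0$, which is why one gains a derivative on $\R^+$ but not across $0$.

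Finally, for part~\ref{it:r2} I would continue the bootstrap $q$ further steps, now drawing on $\kappa\in C^q(\R^+)$, $\kappa^{(1)}\in L^1_{loc}(\R^+)$ and $\kappa(0^+)=\underline{\kappa}(0^+)=\overline{\kappa}(0^+)<\infty$: assuming $\nu\in C^{\Si+j}(\R^+)$ with $0\leq j<q$, differentiating $y\,\nu=\kappa\star\nu$ once more and moving the extra derivative onto $\kappa$ gives $\nu\in C^{\Si+j+1}(\R^+)$, reaching $\nu\in C^{\Si+q}(\R^+)$ after $q$ steps. I expect the main obstacle to be exactly this transfer beyond order $\Si$: there $\nu^{(\Si)}$ is no longer bounded near $0$, so one cannot put derivatives on $\nu$ (the boundary term would be infinite) and must instead split the convolution at $y/2$ and invoke Lemma~\ref{lem_conv_diff_split}, which differentiates each factor only away from the origin and so never sees the blow-up of $\nu^{(\Si)}$ nor needs boundary derivatives of $\kappa$ at $0$. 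The same care is required at the outset: the base-case continuity and local boundedness of $\nu$ near $0$, on which every differentiation rests, have first to be extracted from the Gronwall estimate $\int_0^y\nu={\rm O}(y^{\underline{\kappa}(0^+)-\varepsilon})$ before the inductive machinery can be started.
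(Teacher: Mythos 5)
Your derivation of the convolution equation $y\,\nu(y)=e^{-\biv}\kappa(y)+\kappa\star\nu(y)$ is exactly the paper's starting point, and two of your four cases match the paper's proof: the case $\Si=0$, $\overline{\kappa}(0^+)<\infty$ (dominated convergence applied to the equation) and part~\eqref{it:r2} (splitting the convolution at $y/2$ and invoking Lemma~\ref{lem_conv_diff_split} so that no derivative ever hits the origin). The genuine gap is your claim that part~\eqref{it:rR} and the $\Si\geq 1$ half of part~\eqref{it:rp} can be obtained from this equation by a Gronwall/Volterra bootstrap. The inductive step that is supposed to produce a \emph{new} derivative of $\nu$ is circular: to differentiate $\kappa\star\nu^{(i)}$ via Lemma~\ref{lem_conv_diff_onederiv} you must already know that one of the two factors is absolutely continuous with locally integrable derivative, but in \eqref{it:rR}--\eqref{it:rp} the function $\kappa$ is merely continuous (no smoothness of $\kappa$ is assumed there --- that is precisely what distinguishes \eqref{it:rp} from \eqref{it:r2}), and the absolute continuity of $\nu^{(i)}$ is what you are trying to prove. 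A convolution identity against a non-differentiable kernel cannot manufacture derivatives of its solution by real-variable manipulation; the gain of $\Si-1$ derivatives is a Fourier phenomenon --- the characteristic function of $\mathcal V$ decays like $|z|^{-\underline{\kappa}(0^+)+\varepsilon}$ because of the mass $\underline{\kappa}(0^+)$ of the L\'evy density $\kappa(r)/r$ near $0$ --- and this external input is exactly what the paper imports by citing \cite[Theorem 6]{Wolfe-Cont} (see also \cite[Theorem 28.4]{Sato-99}), once for part~\eqref{it:rR} and a second time inside \eqref{it:rp} to get the existence of $\nu^{(\Si)}\in L^1(\R^+)$. Your plan contains no substitute for it.

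A second, independent failure concerns the pointwise statements at each level. The lower bound $\kappa\geq\underline{\kappa}(0^+)-\varepsilon$ near $0$ does yield your integrated estimate $\int_0^y\nu(r)\,dr={\rm O}\bigl(y^{\underline{\kappa}(0^+)-\varepsilon}\bigr)$, but upgrading this to continuity of $\nu^{(i)}$ and to $\nu^{(i)}(0^+)=0$ requires bounding $\int_{y/2}^{y}\kappa(y-r)\nu^{(i)}(r)\,dr$ from above, i.e.\ an upper bound on $\kappa$ near $0$. Under the hypotheses of \eqref{it:rR} and of the first half of \eqref{it:rp} one may have $\overline{\kappa}(0^+)=\infty$ (only the liminf is controlled), and a continuous, integrable $\kappa$ with thin tall spikes defeats any estimate of the form $\sup\kappa\cdot\int\nu$; integrated smallness of a nonnegative function does not preclude spikes in $\nu$ either, so neither continuity nor vanishing at $0^+$ follows. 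The paper's route avoids both problems: it takes the $C^{\Si-1}(\R)$ regularity (hence the vanishing boundary values $\nu^{(j)}(0)=0$) and the $L^1$ existence of $\nu^{(\Si)}$ from Wolfe's theorem, whose proof needs only the liminf, and uses the convolution equation solely to transfer that regularity to $\R^+$ with one extra derivative and, in \eqref{it:r2}, to exploit the assumed smoothness of $\kappa$ --- which is the part of your proposal that is sound.
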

\begin{proof}
The first claim  follows from  \cite[Theorem 6]{Wolfe-Cont}, see also \cite[Theorem 28.4]{Sato-99}.  
Next assume that $\underline{\kappa}<\infty$.  We first show that $\kappa \in C(\R^+)$. Since $\bar{\mu}$ is non-increasing  on $\R^+$, it is continuous almost everywhere.  Hence, by the dominated convergence theorem,   one obtains, from \eqref{eq:def_k}, that for every $y>0$,
 \begin{eqnarray*}
 \lim_{\delta\to 0} \kappa(y+\delta)&  = & \lim_{\delta\to 0} \left( b  W^{(1)}(y+\delta)+ \int_0^{\frac{y+\delta}{2}}  W^{(1)}(y+\delta-r)\bar{\mu}(r) + \bar{\mu}(y+\delta-r)W^{(1)}(r)   d r  \right) \\
& = & b  W^{(1)}(y)+\int_0^{\frac{y}{2}}  W^{(1)}(y-r)\bar{\mu}(r)   d r   +   \int_0^{\frac{y}{2}}  \bar{\mu}(y - r) W^{(1)}(r)   d r   \\
 &= & \kappa(y)
 \end{eqnarray*}
 where we used, for the second identity, the fact that $W^{(1)}$ is continuous, see Lemma \ref{eq:propW}.  Next, differentiating \eqref{eq:lt_nu}, we get, for $\lambda>0$,
 \[ \int_{0}^{\infty} e^{-\lambda y} y \nu(y)dy = \int_0^{\infty}e^{-\lambda y} \kappa(y)dy \: e^{-\piv(\lambda)}, \]
 that is the density $\nu$ is (can be chosen as) the solution to the convolution equation, for any $y>0$,
 \begin{equation}\label{conv_eq_posinfdiv}
 y\nu(y)=\int_0^{y}\nu(y-r)\kappa(r) dr + e^{-\biv}\kappa(y) = \int_0^{y}\kappa(y-r) \nu(r)d r + e^{-\biv}\kappa(y).
 \end{equation}
If  $\Si\geq 1$, then, from Lemma \ref{elm:abs}, $\biv=\infty$ and, from item \eqref{it:rR}, $\nu\in C^{\Si-1}(\mathbb R)$. Thus, we can use Lemma \ref{lem_conv_diff_onederiv} (repeatedly if necessary) to deduce,
 \begin{equation*}
(y\nu(y))^{(\Si-1)} =  \int_0^{y}\kappa(y-r)\nu^{(\Si-1)}(r) dr.
 \end{equation*}
 By    \cite[Theorem 6]{Wolfe-Cont}, $\nu^{(\Si-1)}$ has a  derivative $\nu^{(\Si)}$ that lies in $L^1(\R^+)$. Then since $\kappa \in C(\R^+)$ and $\nu^{(\kappa-1)}(0)=0$, Lemma \ref{lem_conv_diff_onederiv} yields that $\nu\in C^{\Si}(\R^+)$ and
 \begin{equation}\label{conv_eq_posinfdiv_pthorder}
(y\nu(y))^{(\Si)} =  \int_0^{y}\nu^{(\Si)}(y-r)\kappa(r) dr = \int_0^{y}\kappa(y-r)\nu^{(\Si)}(r) dr.
 \end{equation}
If $\Si=0$  and $\overline{\kappa}(0^+)<\infty$, then since $\kappa \in C(\R^+)$, we get by an application of the dominated convergence theorem and \eqref{conv_eq_posinfdiv} that $\nu \in C(\R^+)$. This proves \eqref{it:rp}. For the last claim, assume  that $\underline{\kappa}(0^+)=\overline{\kappa}(0^+)$, $\kappa\in C^1(\R^+)$    and $\kappa^{(1)}\in L^1_{loc}(\R^+)$.  Then by applying Lemma \ref{lem_conv_diff_onederiv} to \eqref{conv_eq_posinfdiv} in the case where  $\underline{\kappa}=0$ and to \eqref{conv_eq_posinfdiv_pthorder} in the case where  $\underline{\kappa}\geq 1$, we get
  \begin{equation*}
 (y\nu(y))^{(\Si+1)} =  \int_0^{y}\kappa^{(1)}(y-r)\nu^{(\Si)}(r) dr + \Si(0^+) \nu^{(\Si)}(y) + e^{-\biv}\kappa^{(1)}(y).
  \end{equation*}
 Hence $\nu\in C^{\Si+1}(\R^+)$.  If further $\kappa\in C^q(\R^+)$ for some $q\geq 2$, then one can use Lemma \ref{lem_conv_diff_split} and an induction argument to deduce that  $\nu\in C^{\Si+q}(\R^+)$.
\end{proof}

\section{Eigenfunctions: existence, properties and uniform bounds} \label{sec:polyn}   
In this part we investigate analytical properties of the  sequence of eigenfunctions for CBI-semigroups.
\begin{prop} \label{prop:Analicity_of_G}
For any $n =0,1,\ldots,$ and $x,t\geq0$, we have
\begin{equation} \label{eq:pol_eig}
P_t \mathcal{L}_n(x) = e^{-\lambda_n t} \mathcal{L}_n(x)
\end{equation}
where we recall that  $({\mathcal L}_n)_{n \geq 0}$ is the family of Sheffer polynomials whose generating function is $\Gx$, and, for all $\mathfrak{p}\geq 0$,
	\begin{equation}\label{eq:uniform_convergence}
	\frac{d^{\mathfrak{p}}}{d x^{\mathfrak{p}}}\Gx(z)  =  \sum_{n=\mathfrak{p}}^{\infty} {\mathcal L}^{(\mathfrak{p})}_n(x) z^n,  \quad \vert z \vert < R_0,
	\end{equation}
	where the series is locally uniformly convergent in $x$. Moreover,  for any $R \in (0,R_0)$, there exist $C=C(R)>0$ and $\overline{B}=\max_{|z|= R}|B(z)|\in \R_+$
 such that, for any $x \geq 0$, $n\geq0$,
 \begin{eqnarray} \label{eq:bpol}
 \left|\mathcal{L}_n(x)\right| &\leq& C \frac{e^{\overline{B} x}}{R^{n+1}}.
\end{eqnarray}
  \end{prop}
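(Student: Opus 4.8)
The plan is to extract all three claims from the holomorphy of $z\mapsto\Gx(z)$ on $D(0,R_0)$ (Lemma~\ref{Analicity_of_A_B_F}) together with its explicit form $\Gx(z)=e^{-xB(z)+\piv(B(z))}$ from \eqref{eq:def_Gx}, via Cauchy's integral formula for the Taylor coefficients ${\mathcal L}_n$ introduced in \eqref{eq:def-Ln_statement}. For the bound \eqref{eq:bpol}, I would fix $R\in(0,R_0)$, so that Cauchy's formula reads ${\mathcal L}_n(x)=\frac{1}{2\pi i}\oint_{|z|=R}\Gx(z)z^{-n-1}\,dz$. On $\{|z|=R\}$, using $x\ge0$ and $-\Re B(z)\le|B(z)|\le\overline B$, one has $|\Gx(z)|\le e^{\overline B x}e^{\Re\piv(B(z))}$; since $z\mapsto\piv(B(z))$ is continuous on this compact circle inside $D(0,R_0)$, the constant $C:=\max_{|z|=R}e^{\Re\piv(B(z))}$ is finite, and the elementary Cauchy estimate gives $|{\mathcal L}_n(x)|\le Ce^{\overline B x}R^{-n}\le Ce^{\overline B x}R^{-n-1}$, the last inequality because $R<R_0\le1$.

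Next I would differentiate this integral representation in $x$. As $\partial_x^{\mathfrak p}\Gx(z)=(-B(z))^{\mathfrak p}\Gx(z)$ is jointly continuous and locally bounded in $(x,z)$, differentiation under the integral sign is licit and yields ${\mathcal L}_n^{(\mathfrak p)}(x)=\frac{1}{2\pi i}\oint_{|z|=R}(-B(z))^{\mathfrak p}\Gx(z)z^{-n-1}\,dz$, so that $|{\mathcal L}_n^{(\mathfrak p)}(x)|\le\overline B^{\,\mathfrak p}Ce^{\overline B x}R^{-n}$. Since ${\mathcal L}_n$ is a Sheffer polynomial of degree $n$, ${\mathcal L}_n^{(\mathfrak p)}\equiv0$ for $n<\mathfrak p$, and for $|z|\le R'<R$ and $x$ in a compact set this bound dominates $\sum_{n\ge\mathfrak p}{\mathcal L}_n^{(\mathfrak p)}(x)z^n$ by a convergent geometric series in $R'/R$. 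Thus the series converges locally uniformly in $x$, and an induction on $\mathfrak p$ based on the Weierstrass term-by-term differentiation theorem, starting from \eqref{eq:def-Ln_statement}, establishes \eqref{eq:uniform_convergence}.

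For the eigenvalue relation \eqref{eq:pol_eig}, the decisive step is that for real $z\in(0,R_0)$ the function $y\mapsto G_y(z)=e^{\piv(B(z))}e_{B(z)}(y)$ is a bounded exponential, so \eqref{eq:lt_cbi} with $\lambda=B(z)$ and $A(B(z))=z$ give
\begin{equation*}
\int_0^\infty G_y(z)\,P_t(x,dy)=e^{\piv(B(z))}P_te_{B(z)}(x)=\Gx\!\big(A(B(z))e^{-mt}\big)=\Gx(ze^{-mt}).
\end{equation*}
Expanding $G_y(z)=\sum_{n\ge0}{\mathcal L}_n(y)z^n$ and interchanging integral and sum by Fubini--Tonelli is justified by \eqref{eq:bpol}, since $\sum_n|z|^n\int_0^\infty|{\mathcal L}_n(y)|\,P_t(x,dy)\le\frac{C}{R}\big(\sum_n(|z|/R)^n\big)\int_0^\infty e^{\overline B y}P_t(x,dy)<\infty$ for $|z|<R$ once the kernel has the exponential moment $\int_0^\infty e^{\overline B y}P_t(x,dy)<\infty$. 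This interchange yields $\sum_n z^nP_t{\mathcal L}_n(x)=\Gx(ze^{-mt})=\sum_n e^{-\eign t}{\mathcal L}_n(x)z^n$ for small real $z>0$, and identifying the Taylor coefficients in $z$ gives $P_t{\mathcal L}_n(x)=e^{-\eign t}{\mathcal L}_n(x)$ for every $n$.

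The Cauchy estimates and the Weierstrass differentiation are routine; I expect the only genuine obstacle to be securing the exponential moment needed for this last interchange, uniformly in $t\ge0$. This is where I would shrink $R$, hence $\overline B=\max_{|z|=R}|B(z)|$ (recall $B(0)=0$), below $R_A\wedge R_{\piv}$ and so small that $|A(-\overline B)|<R_0$: then \eqref{eq:lt_cbi} and Lemma~\ref{Analicity_of_A_B_F} show that $\lambda\mapsto P_te_\lambda(x)$ extends holomorphically across $0$ to a neighbourhood of $-\overline B$ for every $t\ge0$, and the standard Laplace-transform equivalence recalled after \eqref{eq:standing_assumption} converts this into the finiteness of the required moment. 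Since the resulting identity already holds for all $z$ in a disc, the eigenvalue relation for every $n$ follows by coefficient comparison, so one need not control the moment for the generic $R$ appearing in \eqref{eq:bpol}.
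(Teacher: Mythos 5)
Your proposal is correct and takes essentially the same route as the paper: Cauchy's integral formula with the estimate on $|z|=R$ for \eqref{eq:bpol}, the identity $\partial_x^{\mathfrak{p}}\Gx(z)=(-B(z))^{\mathfrak{p}}\Gx(z)$ for \eqref{eq:uniform_convergence}, and the key computation $\int_0^\infty G_y(z)P_t(x,dy)=\Gx(ze^{-mt})$ from \eqref{eq:lt_cbi}, with the Fubini interchange justified by the exponential moment $P_te_{-\overline{B}}(x)<\infty$ obtained through analytic continuation of $\lambda\mapsto P_te_\lambda(x)$. The only differences are cosmetic: you extract \eqref{eq:pol_eig} by comparing Taylor coefficients of two convergent power series rather than by the paper's change of variables inside the contour integral, and you are somewhat more explicit than the paper about how small $R$ (hence $\overline{B}$) must be taken to secure that moment.
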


\begin{remark}
If $(\overline{\psi}, \overline{\phi}) \in \Nb \times \Bb$ with $(\psi, \psi )= \mathcal E(\overline{\psi}, \overline{\phi})$, then,  writing  ${\mathcal L}_n^{\theta}= e_\theta{\mathcal L}_n$, we have, for all $t,x \geq 0$, $n\geq 0$,
 		\[
 		\overline P_t{\mathcal L}^{\theta}_n(x) = e^{-\lambda_n t}{\mathcal L}^{\theta}_n(x).
 		\] 	
\end{remark}

\begin{proof}
Since, from Lemma \ref{Analicity_of_A_B_F}, we have for all $x\geq 0$, $\Gx(z)  =  e^{\piv(B(z))-xB(z)} \in  \mathcal{H}(R_0), \: R_0>0$, an application of the Cauchy's formula yields
that
\begin{equation}  \label{eq:pol_cont}
 \mathcal{L}_n(x) = \frac{1}{2 \pi i}\oint \frac{\Gx(z)}{z^{n+1}}dz
\end{equation}
where the contour is a circle centered at $0$ and of radius $R<R_0$. Since the functions  $B$ and $ \piv \circ B  \in \mathcal{H}(R_0)$, they are bounded on this contour and we get that
\begin{eqnarray} \label{eq:est_cont}
 \left|\mathcal{L}_n(x)\right| &\leq& \frac{1}{2 \pi }\oint \left|\frac{\Gx(z)}{z^{n+1}}\right| dz \leq   \frac{e^{\overline{B}x}}{2 \pi}\oint\left| \frac{e^{\piv(B(z))}}{z^{n+1}}\right|dz \leq C R^{-n-1} e^{\overline{B}x}
\end{eqnarray}
where $C>0$. 
This proves \eqref{eq:bpol}.
Next, since $B \in \mathcal{H}(R_0)$ with $B(0) =0$, we can choose $R$ such that $\overline{B}=\max_{|z|=R }|B(z)|<R_0$.   From  \eqref{eq:lt_cbi}, we have, for any $\Re(z) >0$,
\begin{equation*}
P_te_z (x) =  e^{-\piv(z)}\Gx(A(z)e^{-mt}).
\end{equation*}
Since $z \mapsto e^{-\piv(z)}\Gx(A(z)e^{-mt}) \in \mathcal{H}(R_0)$, by the principle of analytical continuation, for all $x,t\geq 0$,  $z \longmapsto P_te_z(x) \in \mathcal{H}(R_0) $ and thus from \eqref{eq:bpol} we get that $P_t | \mathcal{L}_n|(x)\leq C R^{-n-1} P_te_{-\overline{B}}(x) <\infty$ as  $\overline{B}<R_0$.
Thus, by Fubini Theorem, one gets, for any $x\geq0$,
\begin{eqnarray*}
 P_t \mathcal{L}_n(x) &=& \frac{1}{2 \pi i}\oint \frac{P_t(G_.(z))(x)}{z^{n+1}}dz = \frac{1}{2 \pi i}\oint \frac{\Gx(ze^{-mt})}{z^{n+1}}dz \\ &=& e^{-m n t}\frac{1}{2 \pi i}\oint_t \frac{\Gx(\bar{z})}{\bar{z}^{n+1}}d\bar{z} \\
&=& e^{-\eign t}\mathcal{L}_n(x)
\end{eqnarray*}
where we performed an obvious change of variable and we used the following identities, with the obvious notation,
\begin{equation}
P_t(G_.(z))(x) = e^{\piv(B(z))}P_te_{B(z)}(x)= e^{\piv(B(z))} e^{-\piv(B(z))}\Gx(ze^{-m t})= \Gx(ze^{-m t}). \end{equation}
 This completes the proof of the first statement with $\mathfrak{p}=0$. The case $\mathfrak{p}\geq 1$  follows again from Cauchy's formula after observing that the mapping $z \mapsto \frac{d^{\mathfrak{p}} }{d x^{\mathfrak{p}}}\Gx(z) = (-1)^\mathfrak{p}B^{\mathfrak{p}}(z) \Gx(z) \in \mathcal{H}(R_0)$.
\end{proof}

We proceed by providing some additional properties regarding the CBI-semigroups and the Scheffer polynomials considered in this paper.
 	\begin{prop} \label{prop:eigen} Assume that $\phi\neq 0$. Then  $P$ extends to a strongly continuous contraction semigroup on the Hilbert space $L^2(\mathcal V)$, defined in \eqref{eq:l2nu}, which is still denoted by $P$. 
 Moreover, for any $t >0$,  \[(e^{-\lambda_n t})_{n \geq 0} \subseteq {\rm S}_p(P_t)=\{z \in \C;\: P_t -z\mathbf{I} \textrm{ is not one-to-one in } L^2(\mathcal V) \},\] that is the point spectrum of $P_t$,   and $( {\mathcal L}_n)_{n \geq 0}$ is a complete sequence of eigenfunctions of $P_t$ in $L^2(\mathcal V)$.
However, the sequence  $({\mathcal L}_n)_{n \geq 0}$ is formed of orthogonal polynomials in some weighted $L^2$ space if and only if $({\mathcal L}_n)_{n \geq 0}$ is the sequence of  Laguerre polynomials,  i.e.~$\psi(u) = \sigma^2u + mu, \sigma^2 > 0, m \geq 0$, and $\phi(u)=bu$, $b>0$. Hence, beyond this case, $P_t$ is a non-self-adjoint contraction semigroup in $L^2(\mathcal V)$. Finally, the algebra of polynomials $\mathcal{A}$ is a core of its generator $\mathbf{L}$.
 	\end{prop}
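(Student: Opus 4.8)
The plan is to establish the five assertions of Proposition~\ref{prop:eigen} in the order stated, with the orthogonality dichotomy as the substantial step. The standing hypothesis $\phi\neq 0$ guarantees (by Proposition~\ref{prop:invariant_measure}) that $\mathcal V$ is a genuine, non-degenerate probability measure, so that $L^2(\mathcal V)$ is non-trivial. For the extension I would exploit that $P$ is conservative, so each $P_t(x,\cdot)$ is a probability measure and Jensen's inequality gives $(P_tf)^2\leq P_t(f^2)$ pointwise; integrating against $\mathcal V$ and using its invariance $\mathcal V P_t=\mathcal V$ (Proposition~\ref{prop:invariant_measure}) yields $\|P_tf\|_{L^2(\mathcal V)}\leq\|f\|_{L^2(\mathcal V)}$ for $f\in C_0(\Rpo)$. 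Since $C_0(\Rpo)$ is dense in $L^2(\mathcal V)$ and $\|P_tf-f\|_{L^2(\mathcal V)}\leq\|P_tf-f\|_\infty\to0$ there, $P_t$ extends to a strongly continuous contraction semigroup on $L^2(\mathcal V)$ by the usual uniform-boundedness-plus-density argument.

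For the point spectrum and completeness, the common ingredient is an exponential moment of $\mathcal V$. The bound \eqref{eq:bpol} reads $|\mathcal L_n(x)|\leq Ce^{\overline{B}x}R^{-n-1}$ with $\overline{B}=\max_{|z|=R}|B(z)|\to0$ as $R\downarrow0$ (as $B(0)=0$); choosing $R$ so that $2\overline{B}<R_{\piv}$ and using that $\mathcal V e_\lambda=e^{-\piv(\lambda)}$ (see \eqref{eq:lt_nu}) extends holomorphically past the origin (Lemma~\ref{Analicity_of_A_B_F}), hence $\mathcal V$ carries exponential moments up to order $R_{\piv}$, I get $\mathcal L_n\in L^2(\mathcal V)$. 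Combined with $P_t\mathcal L_n=e^{-\lambda_n t}\mathcal L_n$ from \eqref{eq:pol_eig} this places each $e^{-\lambda_n t}$ in the point spectrum. The same finite exponential moment makes $\mathcal V$ moment-determinate, so the polynomial algebra $\mathcal A$ is dense in $L^2(\mathcal V)$; reading off the coefficient of $z^n$ in $G_x(z)=e^{\piv(B(z))}e^{-xB(z)}$ and using $B^{(1)}(0)\neq0$ (Lemma~\ref{Analicity_of_A_B_F}) shows $\mathcal L_n$ has degree exactly $n$, whence $\{\mathcal L_0,\dots,\mathcal L_N\}$ spans the degree-$\leq N$ polynomials and $\overline{\mathrm{span}}(\mathcal L_n)=\overline{\mathcal A}=L^2(\mathcal V)$, i.e. $(\mathcal L_n)$ is complete.

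The crux is the dichotomy. The $(\mathcal L_n)$ form a Sheffer sequence with generating function $A(z)e^{xu(z)}$, where $u=-B$ and $A=e^{\piv\circ B}$; rescaling to $n!\,\mathcal L_n$ (which preserves orthogonality) puts this in the umbral Sheffer normalisation, so I would invoke Meixner's classification: an orthogonal Sheffer sequence is, up to an affine change of $x$, Hermite, Charlier, Laguerre, Meixner, or Meixner--Pollaczek. The CBI structure singles out Laguerre through the analytic profile of $u=-B$: by \eqref{def_A}, \eqref{theta_positive} and Lemma~\ref{Analicity_of_A_B_F}, $A$ is an increasing bijection $\Rpo\to[0,1)$, holomorphic at $0$ with $A(0)=0$, so $u=-A^{-1}$ maps $[0,1)$ onto $(-\infty,0]$ with a pole at $z=1$; this rules out the linear $u$ of Hermite, the logarithmic $u$ of Charlier/Meixner and the trigonometric $u$ of Meixner--Pollaczek, leaving only the M\"obius profile $u(z)=-\gamma z/(1-z)$ of Laguerre. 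Matching then forces $A(\lambda)=\lambda/(\lambda+\gamma)$, whence by the relation $\psi A^{(1)}=mA$ one gets $\psi(\lambda)=mA(\lambda)/A^{(1)}(\lambda)=\tfrac{m}{\gamma}\lambda^2+m\lambda$, quadratic with $\sigma^2=m/\gamma>0$; requiring in addition that $A(z)=e^{\piv(B(z))}$ equal the Laguerre prefactor $(1-z)^{-(\alpha+1)}$ gives $\piv(\lambda)=(\alpha+1)\ln(1+\lambda/\gamma)$ and so $\phi=\psi\,\piv^{(1)}=b\lambda$ is linear. Conversely this quadratic/linear pair reproduces the classical Laguerre generating function, and the $\mathcal L_n$ are then orthogonal with respect to the Gamma law, which is exactly $\mathcal V$. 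Non-self-adjointness follows: were $P_t$ self-adjoint on $L^2(\mathcal V)$, its complete eigenfunctions $\mathcal L_n$, attached to the distinct eigenvalues $e^{-mnt}$, would be $\mathcal V$-orthogonal, contradicting the dichotomy outside the Laguerre case. I expect this matching-and-elimination step --- converting ``orthogonal'' into sharp constraints on $(\psi,\phi)$ by marrying Meixner's theorem to the branching-mechanism structure of $A$ --- to be the main obstacle.

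Finally, for the core statement I would apply the standard criterion that a subspace of $D(\mathbf L)$ which is dense in $L^2(\mathcal V)$ and invariant under $(P_t)$ is a core for $\mathbf L$. Here $\mathcal A\subset D(\mathbf L)$ with $\mathbf L\mathcal L_n=-\lambda_n\mathcal L_n$ (differentiating \eqref{eq:pol_eig} at $t=0$ in $L^2(\mathcal V)$), $\mathcal A$ is dense by the completeness step, and $P_t\mathcal A\subseteq\mathcal A$ since $P_t$ fixes each $\mathcal L_n$ up to a scalar and hence preserves every space of polynomials of bounded degree; therefore $\mathcal A$ is a core for $\mathbf L$.
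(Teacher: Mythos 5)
Your opening, spectral/completeness, and core steps are correct and essentially coincide with the paper's proof, with citations replaced by direct arguments: the paper obtains the $L^2(\mathcal V)$ contraction extension from Da Prato's theorem (invariant measure plus Feller property), where you use Jensen plus invariance of $\mathcal V$; it obtains completeness exactly as you do, from the exponential moments $\int_0^\infty e^{\epsilon y}\mathcal V(dy)<\infty$ for $\epsilon<R_{\piv}$, moment-determinacy and Akhiezer's Corollary 2.3 (your degree-counting via $B^{(1)}(0)\neq 0$ makes explicit what the paper leaves implicit); and it proves the core property precisely by your argument, i.e.\ $\mathbf L\mathcal L_n=-\eign\mathcal L_n$ in $L^2(\mathcal V)$ together with density and $P_t$-invariance of the linear span of the $\mathcal L_n$'s.

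The genuine gap is in your elimination step for the orthogonality dichotomy. The paper disposes of this point by directly citing Chihara's classification \cite{Chihara-68} (see also \cite{Meixner}): the only orthogonal polynomial sequences with generating functions of this Sheffer/Brenke type and orthogonality measure supported on $\R^+$ are the Laguerre polynomials. You instead try to kill the non-Laguerre Meixner classes through the mapping profile of $u=-B$, and this test is not discriminating. Orthogonality of a polynomial sequence is invariant under the reflection $x\mapsto -x$ and the rescaling $z\mapsto cz$, and after such reparametrizations the Charlier class (parameter $a=1$) and the Meixner class (parameter $c>1$) yield the profiles $u(z)=\log(1-z)$ and $u(z)=\log\frac{1-z}{1-z/c}$, which, exactly like the CBI profile, map $[0,1)$ decreasingly onto $(-\infty,0]$ and blow up at $z=1$; so they pass your test while being logarithmic rather than M\"obius. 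Moreover your stronger requirement of a \emph{pole} at $z=1$ is not a structural feature of CBI semigroups: in Example \ref{example_specrep} with $0<\alpha<1$ one has $B(z)=(1-z)^{-1/\alpha}-1$, a branch point. To complete the elimination you must invoke the branching structure rather than the profile: if $B(z)=-\log(1-z)$ then $A(\lambda)=1-e^{-\lambda}$ and $\psi(\lambda)=mA(\lambda)/A^{(1)}(\lambda)=m(e^{\lambda}-1)$, while the Meixner profile gives $\psi(\lambda)=\frac{m}{1-1/c}\left(e^{\lambda}+c^{-1}e^{-\lambda}-1-c^{-1}\right)$; both grow exponentially, whereas every $\psi$ of the form \eqref{eq:def_psi} satisfies $\psi(u)\leq \sigma^2u^2+mu+\frac{u^2}{2}\int_0^1r^2\Pi(dr)+u\int_1^\infty r\Pi(dr)={\rm O}(u^2)$ at infinity, a contradiction. (An alternative closer to the paper's citation: the orthogonality measure would have to charge $\R^+$ as $\mathcal V$ does, and when $\phi\neq 0$ Proposition \ref{prop:invariant_measure} gives $\mathcal V$ an a.e.\ positive density on $\R^+$, which is incompatible with the lattice orthogonality measures of Charlier and Meixner.) Granting the M\"obius profile, your matching computation forcing $\psi$ quadratic and $\phi$ linear is correct, as is the deduction of non-self-adjointness from orthogonality of eigenfunctions with distinct eigenvalues.
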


 	\begin{remark}
Although our proof for the non-self-adjointness property  of CBI-semigroups is rather straightforward, we mention that, in a recent and interesting paper,  Handa \cite{CBI-Handa} has shown that the invariant measure of the so-called CIR semigroup, that is the Gamma distribution, is the only reversible probability measure within the entire class of CBI invariant probability measures, which means that beyond the diffusion case they are non-self-adjoint.
 \end{remark}

 	\begin{proof}
Since from Proposition \ref{prop:invariant_measure}, $\mathcal{V}$ is an invariant measure and $P$ is a Feller semigroup, we deduce from Theorem 5.8 in \cite{Daprato-06} the first claim, that is, $P$  admits a unique contraction extension in $L^2(\mathcal V)$. Next, since, for any $\lambda>0$,  ${\mathcal V}e_\lambda= e^{-\piv(\lambda)}$ with, from Lemma \ref{Analicity_of_A_B_F}, $\piv \in \mathcal{H}(R_{\piv})$, we get that for any  $0<\epsilon<R_{\piv}$, $\int_0^{\infty}e^{\epsilon y} \mathcal V(dy)<\infty$. Hence, the probability measure $\mathcal V$ is moment determinate and according to \cite[Corollary 2.3]{Akhiezer-65} the sequence $(\mathcal{L}_n)_{n \geq 0}$ is complete in $L^2(\mathcal V)$.  Since for all $n\geq0$, $\mathcal{L}_n \in L^2(\mathcal V)$, the property of the point spectrum is a direct consequence of the relation \eqref{eq:pol_eig}.
 The statement regarding the orthogonality property of the polynomials follows from a result of Chahira \cite{Chihara-68}, see also \cite{Meixner}, stating that the only sequence of orthogonal polynomials on a (weighted) Hilbert space $L^2$ with support on $\R_+$ and whose generating function is of the form $B(z)e^{xA(z)}$, are the Laguerre polynomials, i.e.~when $\psi(u)=\sigma^2 u + m u, \sigma^2>0$ and $\phi(u)=bu$. This implies that, beyond these cases, the CBI-semigroups are non-self-adjoint in $L^2(\mathcal{V})$.  Using \eqref{eq:pol_eig}, we get, in the  $L^2(\mathcal V)$ topology,  that, for any $n\geq0$,
 \[\mathbf{L} \mathcal{L}_n(x) = \lim_{t\downarrow  0} \frac{P_t\mathcal{L}_n(x)-\mathcal{L}_n(x) }{t} = \lim_{t\downarrow 0} \frac{e^{-\eign t}-1 }{t}\mathcal{L}_n(x) = - \eign \mathcal{L}_n(x), \]
 which completes the proof.
 	\end{proof}

\section{Eigenmeasures: characterization, properties and uniform bounds} \label{sec:meas}
The next result provides the existence as well as explicit representations of the eigenmeasures of the CBI-semigroup, see \eqref{eq:eigenmeasure_P} below for definition. As a by-product, we derive sufficient conditions for  these eigenmeasures to be  absolutely continuous with a smooth density. We also establish a uniform  upper bound which will be needed later for obtaining smoothness properties of the transition densities.

\begin{prop}\label{prop:nu_n} Let $(\psi, \phi)\in {\Ne}\times \Be$ and  $n\geq 0$.
The following bound
\begin{equation}\label{bound_lapl_abs_Vn}
\vert \mathcal{V}_n \vert  e_{\lambda}
\leq e^{-\piv(\lambda)}(2-A(\lambda))^{n}
\end{equation}
holds for any $\lambda>-(R_A\wedge R_\phi)$ where $\vert \mathcal{V}_n \vert$ stands for the total variation of  the (signed) measure  $\mathcal{V}_n$, which we recall   is defined in \eqref{eq:def_Vn}.

Moreover,  $\mathcal{V}_n$ 	 is an eigenmeasure of the \CBI semigroup $P$, in the sense that, for any $f\in {\mathcal D}_{R_A\wedge R_\phi}= \{ f:\Rpo\to\mathbb R \ \mbox{ measurable}; \: f e_{\lambda}  \in L^{\infty}(\R_+) \ \text{for some $\lambda<R_A\wedge R_\phi$} \},$  and $t\geq 0$,
	\begin{equation}\label{eq:eigenmeasure_P}
	\mathcal V_n P_t f = e^{-\lambda_nt}\mathcal V_n f.
	\end{equation}
Next,
	recall from \eqref{eq:def_Vn} that  $\mathcal V_n(dy) = e^{-\biv}\delta_0(dy) + \nu_n(y)dy$. We have the following smoothness properties of $\nu_n$.
	\begin{enumerate}[a)]
		\item \label{it:s1}If $\Si \in [1,\infty]$,
		then
		$\nu_n\in C^{\Si-1}(\mathbb R)$
		with, for any integer $0 \leq \mathfrak{q}\leq \Si-1$ and $y\geq 0$,
	 \begin{equation*}
		\nu_n^{(\mathfrak{q})}(y) =\mathcal{W}_{n}  *\nu^{(\mathfrak{q})}(y) + \nu^{(\mathfrak{q})}(y), \quad n\geq 1.
	 \end{equation*}
		Further,  for all $0\leq  \mathfrak{q} \leq \Si-1$, $K>0$, $\lambda>0$ there exists $C=C_{K,\lambda}(\mathfrak{q})>0$ such that, for all $n\geq 1$,
		 \begin{equation*} 
		 \sup_{y\in   \left[0,K\right]} \left| \nu_n^{(\mathfrak{q})}  (y) \right|
		 \leq  C(2-A(\lambda))^n.
		 \end{equation*}
		 \item \label{it:si} Assume $\underline{\kappa}<\infty$.
		    \begin{enumerate}[b1)]
		    \item \label{it:si1} If $\Si\geq 1$, then $\nu_n\in C^{\Si}(\R_+)$  and if $\Si=0$  and $\overline{\kappa}(0^+)<\infty$, then $\nu_n \in C(\R_+)$. In both cases we have that for all $K>0$, $\lambda>0$ there exists $C=C_{K,\lambda}(\Si)>0$ such that, for all $n\geq 1$,
		    		 \begin{equation} \label{bound_mathcalW_pthderiv2}
		    		 \sup_{y\in  \left[K^{-1},K\right]} \left| \nu_n^{(\Si)}  (y) \right|
		    		  \leq   C n (2-A(\lambda))^n.
		    		 \end{equation}
		    \item\label{it:siq} If $\kappa \in C^{1}(\R_+)$, $\kappa'\in L^1_{loc}(\R_+)$  and $\underline{\kappa}(0^+)=\overline{\kappa}(0^+)$, then $\nu_n\in C^{\Si+\bar{\mathfrak{q}}}(\R_+)$ with $\bar{\mathfrak{q}}$  as in   Theorem \ref{thm:regularity}\eqref{it:b2}. Moreover,  for all $1 \leq  l \leq   \bar{\mathfrak{q}}$, $K>0$, $\lambda>0$ there exists $C=C_{K,\lambda}(\Si+l)>0$ such that, for all $n\geq 1$,
		    		 \begin{equation} \label{bound_mathcalW_pthderiv3}
		    		 \sup_{y\in   \left[K^{-1},K\right]} \left| \nu_n^{(\Si +l)}  (y) \right|
		    		 \leq   C n^{l+1} (2-A(\lambda))^n.
		    		 \end{equation}
		    \end{enumerate}
\end{enumerate}
\end{prop}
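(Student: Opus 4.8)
The plan is to derive everything from the Lebesgue decomposition $\mathcal V_n(dy)=e^{-\biv}\delta_0(dy)+\nu_n(y)dy$ and the explicit formula $\nu_n=e^{-\biv}\mathcal W_n+\mathcal W_n\star\nu+\nu$. Since $\widehat{\omega^{*j}}(\lambda)=(1-A(\lambda))^j$, the binomial theorem gives $\int_0^\infty e^{-\lambda y}\mathcal W_n(y)\,dy=A(\lambda)^n-1$, and feeding this together with $\int_0^\infty e^{-\lambda y}\nu(y)\,dy=e^{-\piv(\lambda)}-e^{-\biv}$ into the formula for $\nu_n$, a short computation collapses to
\begin{equation*}
\mathcal V_n e_\lambda = A(\lambda)^n e^{-\piv(\lambda)},\qquad \lambda\ge 0.
\end{equation*}
To prove \eqref{bound_lapl_abs_Vn} I would introduce the nonnegative majorant $\widetilde{\mathcal W}_n=\sum_{j=1}^n\binom nj\omega^{*j}\ge|\mathcal W_n|$, whose transform is $(2-A(\lambda))^n-1$ by the same identity; because $\omega,\nu\ge0$ the triangle inequality passes through the convolutions to give $|\nu_n|\le e^{-\biv}\widetilde{\mathcal W}_n+\widetilde{\mathcal W}_n\star\nu+\nu$, and repeating the algebra above with $A(\lambda)^n$ replaced by $(2-A(\lambda))^n$ yields $\vert\mathcal V_n\vert e_\lambda\le(2-A(\lambda))^n e^{-\piv(\lambda)}$. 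The admissible range $\lambda>-(R_A\wedge R_\phi)$ is precisely where $A$ and $e^{-\piv}$ extend holomorphically by Lemma \ref{Analicity_of_A_B_F}, so the transforms of the nonnegative $\omega^{*j}$ and $\nu$ converge there and the bound persists by monotone convergence.

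For the eigenmeasure identity \eqref{eq:eigenmeasure_P} I would first take $f=e_\lambda$. Writing $\Gx(z)=e^{\piv(B(z))}e_{B(z)}$ and using $\mathcal V_n e_{B(z)}=A(B(z))^n e^{-\piv(B(z))}=z^n e^{-\piv(B(z))}$, since $A\circ B=\mathrm{id}$, gives the clean identity $\int_0^\infty \Gx(z)\,\mathcal V_n(dx)=z^n$. Substituting this into Ogura's formula $P_te_\lambda(x)=e^{-\piv(\lambda)}\Gx(A(\lambda)e^{-mt})$ from \eqref{eq:lt_cbi} produces $\mathcal V_nP_te_\lambda=e^{-\piv(\lambda)}(A(\lambda)e^{-mt})^n=e^{-\lambda_n t}\mathcal V_n e_\lambda$, as $\lambda_n=mn$. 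To pass to all $f\in\mathcal D_{R_A\wedge R_\phi}$ I would promote this to the measure identity $\int_0^\infty P_t(x,\cdot)\,\mathcal V_n(dx)=e^{-\lambda_n t}\mathcal V_n(\cdot)$: the two signed measures have the same Laplace transform by the previous line (the interchange being legitimate by Fubini and \eqref{bound_lapl_abs_Vn}), hence coincide by injectivity of the Laplace transform; integrating a test function $f$ with $|f|\le Ce_{-\lambda}$, $\lambda<R_A\wedge R_\phi$, against this identity — the finiteness of every integral being guaranteed by \eqref{bound_lapl_abs_Vn} evaluated at $-\lambda$ — then gives $\mathcal V_nP_tf=e^{-\lambda_n t}\mathcal V_n f$.

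The regularity assertions a)--b) all come from differentiating $\nu_n=e^{-\biv}\mathcal W_n+\mathcal W_n\star\nu+\nu$ and transferring smoothness from its ingredients: that of $\nu$ supplied by Proposition \ref{thm:regularity_nu}, and that of $\mathcal W_n$, proved by the same convolution technique, by Lemma \ref{lem_smooth_mathcalW}. When $\Si\ge1$ we have $\biv=\infty$ by Lemma \ref{elm:abs}, the atomic term disappears, and since $\nu\in C^{\Si-1}(\mathbb R)$ with derivatives vanishing at $0$, Lemma \ref{lem_conv_diff_onederiv} lets me move each of the first $\Si-1$ derivatives onto $\nu$, giving $\nu_n^{(\mathfrak q)}=\mathcal W_n\star\nu^{(\mathfrak q)}+\nu^{(\mathfrak q)}\in C(\mathbb R)$ for $0\le\mathfrak q\le\Si-1$; this is part a). For b1)--b2) I would continue past order $\Si$ by distributing the remaining derivatives between $\mathcal W_n$ and $\nu$, invoking Lemma \ref{lem_conv_diff_onederiv} and, whenever two factors must be differentiated together, the splitting Lemma \ref{lem_conv_diff_split}, the hypotheses on $\kappa$ (through Proposition \ref{thm:regularity_nu}) delivering the claimed $C^{\Si+\bar{\mathfrak q}}$ regularity.

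The uniform bounds are the crux and I expect them to be the main obstacle. The exponential factor $(2-A(\lambda))^n$ always arises from the Laplace mass of the majorant through $\int_0^K\widetilde{\mathcal W}_n(u)\,du\le e^{\lambda K}\int_0^\infty e^{-\lambda u}\widetilde{\mathcal W}_n(u)\,du\le e^{\lambda K}(2-A(\lambda))^n$ for $\lambda>0$. For $0\le\mathfrak q\le\Si-1$ this alone suffices, since $\nu^{(\mathfrak q)}$ is bounded on $[0,K]$ and $|\mathcal W_n\star\nu^{(\mathfrak q)}(y)|\le\sup_{[0,K]}|\nu^{(\mathfrak q)}|\int_0^K\widetilde{\mathcal W}_n$ carries no polynomial factor. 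The factors $n$ and $n^{l+1}$ in \eqref{bound_mathcalW_pthderiv2}--\eqref{bound_mathcalW_pthderiv3} surface only once $y$ is bounded away from $0$ and the singularity of $\nu^{(\Si+\cdot)}$ near $0$ must be absorbed: I would split $\int_0^y=\int_0^{y/2}+\int_{y/2}^y$, so that on $[y/2,y]$ the smoothest factor is controlled by its supremum over $[K^{-1}/2,K]$ while $\int_0^{y/2}\widetilde{\mathcal W}_n$ supplies $(2-A(\lambda))^n$, and on $[0,y/2]$ the factor $\mathcal W_n$ (or its derivatives) is evaluated on the compact $[K^{-1}/2,K]$ against $\nu^{(\cdot)}\in L^1$. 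The polynomial growth is then traced to the binomial coefficients multiplying the least-smoothed terms $\omega^{*j}$ on that compact: the leading coefficient $\binom n1=n$ produces the factor $n$ at order $\Si$, and each additional derivative forced onto $\mathcal W_n$, via the recursion $\mathcal W_n=\mathcal W_{n-1}-\mathcal W_{n-1}\star\omega-\omega$, contributes one further power of $n$, giving $n^{l+1}$ at order $\Si+l$. The delicate point — and the heart of the argument — is to make these local estimates uniform: to bound $\sup_{[a,b]}\omega^{*j}$ and its derivatives well enough that the weighted sum over $j$ stays of order $(2-A(\lambda))^n$, and to upgrade the resulting almost-everywhere density identities to genuine continuity.
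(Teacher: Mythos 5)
Your treatment of the Laplace-transform identity, of the bound \eqref{bound_lapl_abs_Vn}, of the eigenmeasure property \eqref{eq:eigenmeasure_P} (where your intermediate identity $\int_0^\infty G_x(z)\,\mathcal V_n(dx)=z^n$ is a clean reformulation of the paper's computation), and the smoothness-transfer strategy via Lemmas \ref{lem_conv_diff_onederiv} and \ref{lem_conv_diff_split} all follow the paper's route and are sound. The genuine gap is exactly where you locate ``the heart of the argument'': the regularity of $\mathcal W_n$ and the uniform bounds \eqref{bound_mathcalW_pthderiv2}--\eqref{bound_mathcalW_pthderiv3}. Your plan is to control $\sup_{[a,b]}\omega^{*j}$ and its derivatives term by term in the series $\mathcal W_n=\sum_{j=1}^n\binom nj(-1)^j\omega^{*j}$, with the powers of $n$ attributed to binomial coefficients and to iterating the recursion $\mathcal W_n=\mathcal W_{n-1}-\mathcal W_{n-1}\star\omega-\omega$. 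This cannot work as stated: $\omega$ is only known to be an integrable probability density (Corollary \ref{corol:omega}); it need not be differentiable, nor even bounded near $0$, so the terms $\omega^{*j}$ cannot be differentiated and the series gives no access to derivatives of $\mathcal W_n$. Even the plain continuity $\mathcal W_n\in C(\R^+)$ --- which you already need in part a) to get the $C^{\Si-1}(\R)$ conclusion from Lemma \ref{lem_conv_diff_onederiv} --- is not available from your representation. There is also a structural symptom: part b2) asserts $C^{\Si+\bar{\mathfrak{q}}}$ regularity where $\bar{\mathfrak{q}}$, per Theorem \ref{thm:regularity}\eqref{it:b2}, is defined through the smoothness of $\kappa$ \emph{and of the scale function} $W$; in your scheme $W$ never enters the analysis of $\mathcal W_n$, so that hypothesis could never be used.

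The missing device (the paper's Lemma \ref{lem_smooth_mathcalW}) is a convolution equation for $\mathcal W_n$: since $A^n$ solves $\psi(\lambda)\frac{d}{d\lambda}A^n(\lambda)=\lambda_n A^n(\lambda)$, inverting Laplace transforms against \eqref{eq:def_W} yields
\begin{equation*}
-y\,\mathcal W_{n}(y)=\lambda_n\left(\int_0^y W(y-r)\,\mathcal W_{n}(r)\,dr+W(y)\right),\quad y>0.
\end{equation*}
This identity does everything at once: continuity and $C^p$-regularity of $\mathcal W_n$ are inherited from the scale function $W$ through Lemmas \ref{lem_conv_diff_onederiv} and \ref{lem_conv_diff_split} (never from $\omega$), and taking suprema over $[K^{-1},K]$ and using the $L^1$ bound \eqref{lapl_absmathcalWn} produces, by induction on the order of differentiation, precisely the factors $n^{l+1}(2-A(\lambda))^n$: the explicit coefficient $\lambda_n=mn$ in the equation is the true source of each power of $n$, not the binomial coefficients. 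With this lemma in hand, your assembly of parts a), b1), b2) from the representation $\nu_n=e^{-\biv}\mathcal W_n+\mathcal W_n\star\nu+\nu$ and Proposition \ref{thm:regularity_nu} goes through as the paper does it; without it, the proposal is incomplete at its crux.
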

\begin{remark} Note that if $(\overline{\psi}, \overline{\phi}) \in \Nb \times \Bb$  with $(\psi, \phi )= \mathcal E(\overline{\psi}, \overline{\phi})$ then  the measure $\overline{\mathcal V}_n(dy) = e^{\theta y} {\mathcal V}_n(dy)$ is an eigenmeasure for the  \CBIb semigroup $\overline{P}$, in the sense that
	$
	\overline{\mathcal V}_n\overline{P}_tf = e^{-(\overline{\phi}(\theta)+\eign) t}\overline{\mathcal V}_n f.
	$
\end{remark}
\subsection{Proof of Proposition \ref{prop:nu_n}}
We split the proof into several steps.
Note that for  $n=0$ we have $\lambda_n =0$ and $\mathcal{V}(dy) =\mathcal{V}_0(dy) = e^{-\biv} \delta_0(dy) + \nu_0(y) dy$ with $\nu_0(y) =\nu(y)$. Hence this case corresponds to the study of the invariant measure $\mathcal{V}$ which was addressed in Proposition \ref{prop:invariant_measure}.
\subsubsection{Proof of  \eqref{eq:eigenmeasure_P}}
By the definitions \eqref{eq:co-eigen_series} and \eqref{def_omega}, a classical property of the Laplace transform of a convolution and the binomial theorem,
\begin{eqnarray}
\int_0^\infty e^{-\lambda y} \mathcal{W}_{n}(y) d y & = & \sum_{j=1}^n \binom n j (-1)^j \left( \int_0^\infty e^{-\lambda y} \omega(y) d y \right)^j \nonumber \\
& = & -1 + \sum_{j=0}^n \binom n j (-1)^j (1-A(\lambda))^j \label{lapl_mathcalWn} \\
& = & A(\lambda)^n -1,\nonumber
\end{eqnarray}
where $\lambda >-R_A$. By the triangle inequality $|\mathcal{W}_{n}(y) |\leq \sum_{j=1}^n \binom n j \omega^{*j}(y)$ and so by the same arguments that led  to \eqref{lapl_mathcalWn},
\begin{equation}\label{lapl_absmathcalWn}
\int_0^\infty e^{-\lambda y} |\mathcal{W}_{n}(y) | d y \leq   (2-A(\lambda))^n -1.
\end{equation}
Thus, combining \eqref{def_nu}, \eqref{eq:def_Vn}, \eqref{eq:definition_nu_n}, Lemma \ref{Analicity_of_A_B_F}, \eqref{lapl_mathcalWn}  and \eqref{lapl_absmathcalWn}  we get, for any $\lambda >-(R_A\wedge R_\phi)$,  that
\begin{equation}\label{eq:lt_nu_n_proof}
\mathcal{V}_n e_\lambda =  \left( e^{-\biv} + \int_0^\infty e^{-\lambda y} \nu(y)   d y \right) \left( 1 + \int_0^\infty e^{-\lambda y} \mathcal{W}_{n}(y) d y \right)
  =  e^{-\piv(\lambda)}A(\lambda)^n.
  \end{equation}
  and   the inequality \eqref{bound_lapl_abs_Vn}.
On the other hand, from the expression  \eqref{eq:lt_cbi} of the Laplace transform of the semigroup, we obtain that, for any $t,\lambda>0$ and $n\geq0$,
\begin{eqnarray}
\mathcal{V}_n P_t e_\lambda
		& = & e^{-\piv(\lambda)}  \int_0^\infty  \Gx(A(\lambda)e^{-mt})\mathcal{V}_n(dx) \nonumber \\
		& = & e^{-\piv(\lambda)} e^{\piv(B(A(\lambda)e^{-mt}))}  \mathcal{V}_n e_{B(A(\lambda)e^{-mt})}  \nonumber \\
		&= & e^{-\piv(\lambda)} e^{\piv(B(A(\lambda)e^{-mt}))} e^{-\piv(B(A(\lambda)e^{-mt}))}A(\lambda)^ne^{-\eign t} \nonumber \\
	&= & e^{-\piv(\lambda)} A(\lambda)^ne^{-\eign t}=e^{-\eign t} \mathcal{V}_n e_\lambda \label{eq:coe}
\end{eqnarray}
where the third equality is obtained by means of the identity \eqref{eq:lt_nu_n_proof} and recalling that $B$ is the inverse of $A$. We complete the proof of \eqref{eq:eigenmeasure_P} by combining the identities \eqref{eq:lt_nu_n_proof} and \eqref{eq:coe} and invoking the injectivity property of the Laplace transform.

\subsubsection{A key lemma}
Before we continue with the proof of Proposition \ref{prop:nu_n}, we need the following lemma on the function $\mathcal W_n$ defined in \eqref{eq:co-eigen_series}.
\begin{lemma}\label{lem_smooth_mathcalW}
 For any $n\geq 1$, $\mathcal W_{n}\in C^1(\R_+)$. Further, if $W\in C^{p}(\R_+)$ for some $p\geq 2$, then  for any $n\geq 1$, $\mathcal W_{n}\in C^p(\R_+)$  and
  for all $0\leq l\leq p$, $K>0$, $\lambda>0$ there exists $0<C=C_{K,\lambda}(l)<\infty$ such that, for all $n\geq 1$,
 \begin{equation}\label{bound_mathcalW_pthderiv}
 \sup_{y\in  \left[K^{-1},K\right]} \left|  \mathcal W^{(l)}_{ n}(y)   \right| \leq C   n^{l+1}  (2-A(\lambda))^n.
 \end{equation}
\end{lemma}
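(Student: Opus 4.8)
The plan is to first convert the combinatorial definition \eqref{eq:co-eigen_series} of $\mathcal W_{n}$ into a single Volterra-type convolution equation that couples $\mathcal W_{n}$ only to the scale function $W$, and then to read off its smoothness from the two convolution lemmas of Section \ref{sec_conv}. Starting from $\int_0^\infty e^{-\lambda y}\mathcal W_{n}(y)\,dy = A(\lambda)^n-1$, which is \eqref{lapl_mathcalWn}, I would differentiate in $\lambda$, use the identity $\psi(\lambda)A^{(1)}(\lambda)=mA(\lambda)$ (from the proof of Corollary \ref{corol:omega}) together with $1/\psi(\lambda)=\int_0^\infty e^{-\lambda y}W(y)\,dy$ from \eqref{eq:def_W}, and invert the Laplace transform to obtain the master equation
\[
y\,\mathcal W_{n}(y) = -nm\bigl(W(y) + (W*\mathcal W_{n})(y)\bigr), \qquad y>0.
\]
I would also record the consequences of \eqref{lapl_absmathcalWn}, namely that $\mathcal W_{n}\in L^1(\R^+)$ (take $\lambda=0$) and, more usefully, $\int_0^\infty e^{-\lambda y}|\mathcal W_{n}(y)|\,dy\le(2-A(\lambda))^n$ for every $\lambda>0$, since these are the inputs that produce the factor $(2-A(\lambda))^n$ in the final bound.

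For the first assertion ($\mathcal W_{n}\in C^1(\R^+)$, unconditional since $W\in C^1(\R^+)$ by Lemma \ref{eq:propW}) I would argue in two steps from the master equation. Because $W$ is continuous and bounded (it increases to $1/m$) and $\mathcal W_{n}\in L^1(\R^+)$, dominated convergence shows $W*\mathcal W_{n}$ is continuous on $\R^+$, whence $\mathcal W_{n}$ is continuous on $\R^+$ (the factor $1/y$ being smooth there). With $\mathcal W_{n}\in C(\R^+)$ in hand I would apply Lemma \ref{lem_conv_diff_onederiv}(i) with $f=W$, for which $f(0^+)=W(0)=0$, and $g=\mathcal W_{n}$, concluding $W*\mathcal W_{n}\in C^1(\R^+)$ with $(W*\mathcal W_{n})^{(1)}=W^{(1)}*\mathcal W_{n}$; the master equation then gives $\mathcal W_{n}\in C^1(\R^+)$.

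For the smoothness claim under $W\in C^p(\R^+)$, the plan is an induction on $l$ proving $\mathcal W_{n}\in C^l(\R^+)$, with base case $l=1$ being the first assertion. Differentiating the master equation $l$ times and using $y^{(i)}=0$ for $i\ge2$ gives the recursion
\[
\mathcal W_{n}^{(l)}(y)=\frac1y\Bigl[-nm\,W^{(l)}(y)-nm\,(W*\mathcal W_{n})^{(l)}(y)-l\,\mathcal W_{n}^{(l-1)}(y)\Bigr],
\]
so everything reduces to showing $(W*\mathcal W_{n})^{(l)}$ exists and is continuous. The main obstacle is an apparent circularity: the symmetric Lemma \ref{lem_conv_diff_split} applied to $W*\mathcal W_{n}$ at order $l$ would require the still-unknown derivative $\mathcal W_{n}^{(l)}$, and one cannot simply push all derivatives onto the smooth factor $W$ because the boundary values $W^{(j)}(0^+)$ need not be finite (indeed $W^{(1)}(0^+)=\infty$ when $\sigma=0$). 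I would break this by first peeling off one derivative onto $W$, writing $(W*\mathcal W_{n})^{(l)}=(W^{(1)}*\mathcal W_{n})^{(l-1)}$ (licit since $W(0^+)=0$), and only then invoking Lemma \ref{lem_conv_diff_split} on the convolution $W^{(1)}*\mathcal W_{n}$ \emph{at order $l-1$}. At that order the lemma requires precisely $W^{(1)}\in C^{l-2}$ with $(W^{(1)})^{(l-1)}=W^{(l)}$ existing (available since $l\le p$) and $\mathcal W_{n}\in C^{l-2}$ with $\mathcal W_{n}^{(l-1)}$ existing (available from the induction hypothesis $\mathcal W_{n}\in C^{l-1}$), and crucially never $\mathcal W_{n}^{(l)}$. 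This yields $W^{(1)}*\mathcal W_{n}\in C^{l-1}(\R^+)$, hence $(W*\mathcal W_{n})^{(l)}\in C(\R^+)$, and the recursion delivers $\mathcal W_{n}\in C^l(\R^+)$.

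Finally, I would establish the uniform bound \eqref{bound_mathcalW_pthderiv} by carrying a quantitative estimate through the same induction, using repeatedly that for $y\le K$ one has $\int_0^y|\cdot|\le e^{\lambda K}\int_0^\infty e^{-\lambda r}|\cdot|\,dr$, so the bound $\int_0^\infty e^{-\lambda r}|\mathcal W_{n}|\,dr\le(2-A(\lambda))^n$ applies. The base case $l=0$ follows from the master equation, giving $\sup_{[K^{-1},K]}|\mathcal W_{n}|\le Cn(2-A(\lambda))^n$ (prefactor $nm$ times a bound on $W*\mathcal W_{n}$). For the step I would bound $(W*\mathcal W_{n})^{(l)}=(W^{(1)}*\mathcal W_{n})^{(l-1)}$ on $[K^{-1},K]$ via the explicit split formula of Lemma \ref{lem_conv_diff_split}: its dominant contribution $\int_0^{y/2}\mathcal W_{n}^{(l-1)}(y-r)W^{(1)}(r)\,dr$, whose argument $y-r$ stays away from $0$, is controlled by the inductive bound $\sup|\mathcal W_{n}^{(l-1)}|\le Cn^{l}(2-A(\lambda))^n$ against the finite $L^1$-mass of $W^{(1)}$ (Lemma \ref{eq:propW}), while the remaining integral and the boundary terms at $y/2$ are of strictly lower order in $n$. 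Multiplying by the prefactor $nm$ in the recursion then produces the extra power, yielding $\sup_{[K^{-1},K]}|\mathcal W_{n}^{(l)}|\le Cn^{l+1}(2-A(\lambda))^n$, as claimed.
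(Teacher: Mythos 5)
Your proposal is correct and follows essentially the same route as the paper's proof: your master equation is exactly the paper's \eqref{int_eq_mathcalW} (derived there from $\frac{d}{d\lambda}A^n(\lambda)=\frac{\lambda_n}{\psi(\lambda)}A^n(\lambda)$, \eqref{eq:def_W} and \eqref{lapl_mathcalWn} via Laplace inversion), and your subsequent steps --- dominated convergence plus Lemma \ref{lem_conv_diff_onederiv} for the $C^1$ claim, the induction that peels one derivative onto $W$ and then applies Lemma \ref{lem_conv_diff_split} at order $l-1$ to $W^{(1)}*\mathcal W_{n}$ (which is precisely the structure of the paper's formula \eqref{pderiv_mathcalW}), and the inductive bound whose dominant term $\int_0^{y/2}\mathcal W_{n}^{(l-1)}(y-r)W^{(1)}(r)\,dr$ is controlled by the previous-order bound together with \eqref{lapl_absmathcalWn} --- reproduce the paper's argument step for step. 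The only difference is expository: you spell out the circularity obstruction and its resolution, which the paper handles implicitly with the same asymmetric splitting.
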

\begin{proof}
Recalling that  $A(\lambda)= e^{-m \int_{\lambda}^\infty \frac{du}{\psi(u)}}$  and  $\lambda_n =m n$, we have, for any $n \geq 0$ and $\lambda>0$,  the identity
\begin{equation*}
\frac{d}{d\lambda}A^n(\lambda) = \frac{\lambda_n}{\psi(\lambda)} A^n(\lambda).
\end{equation*}
Then the Laplace transform inversion 
combined  with \eqref{eq:def_W} and \eqref{lapl_mathcalWn} yield, for any $y>0$,
\begin{equation}\label{int_eq_mathcalW}
-y \mathcal W_{n} (y) =  \lambda_n \left( \int_0^y W(y-r) \mathcal W_{n} (r) d r + W(y) \right).
\end{equation}
Since $W \in C(\mathbb R^+)$, we get by an application of the dominated convergence theorem to \eqref{int_eq_mathcalW} that $\mathcal W_{n}\in C(\mathbb R^+)$. Moreover, since $W^{(1)}\in L^1(\R_+)$ by Lemma \ref{eq:propW}  and $\mathcal W_n\in L^1(\R_+)$ by \eqref{lapl_absmathcalWn},   Lemma \ref{lem_conv_diff_onederiv} in combination with \eqref{int_eq_mathcalW} yields that  $\mathcal W_{n}\in C^1(\R_+)$ with
\begin{equation}\label{int_eq_mathcalWprime}
\left(- y \mathcal W_{n} (y) \right)^{(1)} = \lambda_n \left( \int_0^y W^{(1)}(y-r) \mathcal W_{n} (r) d r + W^{(1)}(y)\right).
\end{equation}
Assume now that $W\in C^p(\R_+)$ for some $p\geq 2$. Then  by induction and Lemma \ref{lem_conv_diff_split}  combined with Leibniz's formula, we deduce that $\mathcal W_{\lambda_n}\in C^p(\R_+)$,  and, for any $n>0$,
\begin{equation}\label{pderiv_mathcalW}
\begin{split}
-\frac{y\mathcal W^{(p)}_{ n} (y) +p \mathcal W_{ n}^{(p-1)}(y)}{\lambda_n}  = & -\frac{\left(y \mathcal W_{ n} (y) \right)^{(p)}}{\lambda_n}   \\
 = &   \int_0^{\frac{y}{2}}   W^{(p)}(y-r)\mathcal W_{ n}(r)  d r + \int_0^{\frac{y}{2}}  \mathcal W_{ n}^{(p-1)} (y-r)  W'(r) d r  + W^{(p)}(y)   \\
  & + \tfrac 12\sum_{j=0}^{p-2} \left(  W^{(j+1)}(\tfrac{y}{2}) \mathcal W_{ n} (\tfrac{y}{2}) +    W'(\tfrac{y}{2})  \mathcal W_{ n}^{(j)} (\tfrac{y}{2})  \right)^{(p-2-j)}    \\
 = &  \int_0^{\frac{y}{2}}   W^{(p)}(y-r)\mathcal W_{ n}(r)  d r + \int_0^{\frac{y}{2}}  \mathcal W_{ n}^{(p-1)} (y-r)  W'(r) d r  + W^{(p)}(y)     \\
  + &   \tfrac 12\sum_{j=0}^{p_0} (\tfrac 12)^{p_j} \sum_{k=0}^{p_j} \binom{p_j}{k} \left(  W^{(p-1-k)}(\tfrac{y}{2}) \mathcal W_{ n}^{(k)} (\tfrac{y}{2}) +    W^{(1+k)}(\tfrac{y}{2})  \mathcal W_{ n}^{(p_k)} (\tfrac{y}{2})  \right),
\end{split}
\end{equation}
where for $k=0,1,\ldots,$ we have set $p_k=p-2-k$. Next, we prove the uniform bound \eqref{bound_mathcalW_pthderiv} by induction in $l$. First assume that $l=0$. Then  the identity \eqref{int_eq_mathcalW} combined with the fact that $W$ is non-negative  and increasing entail, writing for $j=1,2$, $I_j = [\tfrac{1}{jK},K]$ here and below, that
\begin{eqnarray}\label{bound_convW_0thderiv}
\sup_{y\in I_1} \left|  y \mathcal W_{ n}(y)  \right|
&\leq&  \lambda_n W(K)  \left( \int_0^{ K} |  \mathcal W_{ n}(r) | d r + 1 \right) \nonumber  \\
&\leq&  \lambda_n W(K) \left(  e^{\lambda K} \int_0^\infty e^{-\lambda r} |  \mathcal W_{ n}(r) | d r  + 1 \right) \\
&\leq & m W(K)     n e^{\lambda K} (2-A(\lambda))^n,\nonumber
\end{eqnarray}
where  we used the inequality \eqref{lapl_absmathcalWn} in the last line.
Hence \eqref{bound_mathcalW_pthderiv} holds for $l=0$. Now assume that $l=1$. Then starting from the relation \eqref{int_eq_mathcalWprime} and using similar arguments as for the previous case plus \eqref{bound_convW_0thderiv}, we get
 \begin{eqnarray*}
\sup_{y\in I_1 } \left|  y  \mathcal W^{(1)}_{ n}(y)  \right|
&\leq &  \sup_{y\in I_1}   \left| \mathcal W_{ n}(y) \right|  \\ &+& \lambda_n \sup_{y\in I_1}  \Bigg| \int_0^{\frac{y}{2}} W^{(1)}(y-r) \mathcal W_{ n} (r) +\mathcal W_{ n}(y-r)   W^{(1)}(r) d r  + W^{(1)}(y) \Bigg|  \\
&\leq &  C n  (2-A(\lambda))^n + \lambda_n \left(  e^{\lambda K} (2-A(\lambda))^n \sup_{y\in I_2} W^{(1)}(y)   +   W(K) \sup_{y\in I_2} \mathcal W_{ n}(y)   \right) \\
&\leq &   n (2-A(\lambda))^n  \left( C + m e^{\lambda K} \sup_{y\in I_2} W^{(1)}(y)  +  \lambda_n C  W(K)    \right),
\end{eqnarray*}
where $C>0$ is a generic constant.
Hence \eqref{bound_mathcalW_pthderiv} holds for $l=1$.
 Now assume that \eqref{bound_mathcalW_pthderiv} is true for $l=0,1,\ldots,k-1$ with $2\leq k\leq p$. Then  by \eqref{pderiv_mathcalW} and the induction hypothesis, we get, writing $k_j=k-2-j$, for  $j=0,1\ldots$,
\begin{eqnarray*} 
\sup_{y\in I_1} \left| y \mathcal W_{ n}^{(k)}(y)  \right|
&\leq &  \sup_{y\in I_1}   \left| k\mathcal W_{ n}^{(k-1)}(y) \right|  + \lambda_n \left(\int_0^{\frac{K}{2}}  \left| \mathcal W_{ n}(r) \right| d r
\sup_{y\in I_2} \left|W^{(k)}(y)\right|   \right. \\
& + & \sup_{y\in I_2} \left| \mathcal W_{ n}^{(k-1)} (y)  \right| \int_0^{\frac{K}{2}} W^{(1)}(r)  dr  + \sup_{y\in I_1} \left|W^{(k)}(y)\right| \\
& + & \tfrac 12\sum_{j=0}^{k_0} (\tfrac 12)^{k_j} \sum_{i=0}^{k_j} \binom{k_j}{i}  \sup_{y\in I_2} \left| W^{(k_i)}(y) \mathcal W_{ n}^{(i)} (y) \right| \left. +  \sup_{y\in I_2} \left| W^{(1+i)}(y)  \mathcal W_{ n}^{(k_i)} (y) \right| \right)     \\
&\leq &   n (2-A(\lambda))^n \Bigg(  n^{k-1} k C(k) +  m\sup_{y\in I_2} \left|W^{(k)}(y)\right|  e^{\lambda K}     +  n^k m C(k) W(K) \\
& +&   mk_0!\sum_{j=0}^{k_0}  \sum_{i=0}^{k_j}   n^{i+1} C(i)  \sup_{y\in I_2} \left| W^{(k_i )}(y)\right| + n^{k_i+1} C(k_i)  \sup_{y\in I_2} \left| W^{(1+i)}(y)\right|   \Bigg),
\end{eqnarray*}
where the $C(k)$'s are generic positive constants and we have used that $(2-A(\lambda))^n$ is increasing in $n\geq1$   because $A(\lambda)\in(0,1)$ for $0<\lambda<\infty$. It follows that \eqref{bound_mathcalW_pthderiv} holds for $l=k$, which completes the proof.
\end{proof}

\subsubsection{Proof of  Proposition \ref{prop:nu_n}\eqref{it:s1}}
Assume that $\Si \in [1,\infty]$. Then $\biv=\infty$, see Lemma  \ref{elm:abs}  and so
\begin{equation*}
\nu_n(y) = \mathcal W_n*\nu(y) + \nu(y).
\end{equation*}
By Proposition \ref{thm:regularity_nu}\eqref{it:rR},  $\nu \in C^{\Si-1}(\Bbb R)$. Hence,
  for all $0\leq \mathfrak{q}\leq \Si -1$, $\nu^{(\mathfrak{q})}(0) =0$ and since $\mathcal W_n$ is in $C(\R_+)$ by Lemma \ref{lem_smooth_mathcalW}, we have by using Lemma \ref{lem_conv_diff_onederiv} repeatedly,
\begin{eqnarray} \label{eq:dernu}
\nu_n^{(\mathfrak{q})}(y) =  \mathcal{W}_{n}  \star \nu^{(\mathfrak{q})}(y) +  \nu^{(\mathfrak{q})}(y),
\end{eqnarray}
and, in particular, 	$\nu_n\in C^{\Si-1}(\mathbb R)$. To prove the uniform bound, note that by \eqref{eq:dernu} and \eqref{lapl_absmathcalWn},
\begin{equation*}
\begin{split}
\vert \nu_n^{(\mathfrak{q})}(y) \vert \leq & \left( e^{\lambda y} \int_0^\infty e^{-\lambda r} |\mathcal W_n(r)| d r  +1 \right) \sup_{0\leq r\leq y }\vert\nu^{(\mathfrak{q})}(r)\vert \\
\leq & e^{\lambda y} (2-A(\lambda))^n \sup_{0\leq r\leq y }\vert\nu^{(\mathfrak{q})}(r)\vert.
\end{split}
\end{equation*}
Hence the result follows as $\nu^{(\mathfrak{q})} \in C(\R)$.
%


\subsubsection{Proof of Proposition \ref{prop:nu_n}\eqref{it:si1}}
Recall that, for any $n\geq 1$ and $y>0$,
\begin{equation}\label{conv_eq_nu_n}
\nu_n(y) = \int_0^y \mathcal W_{n}(y-r) \nu(r) d r + \nu(y) + e^{-\overline{\Phi}_{\nu}} \mathcal W_{ n}(y).
\end{equation}
Assume first that $1\leq \Si<\infty$. Then, from Lemma \ref{elm:abs}, $\overline{\Phi}_\nu=\infty$ and thus  by Proposition \ref{thm:regularity_nu}\eqref{it:rR} and \eqref{it:rp} and Lemma \ref{lem_conv_diff_onederiv}, $\nu_n\in C^{\Si}(\R_+)$ with, for any $y>0$,
\begin{equation}\label{conv_eq_nu_n_pthorder}
\nu_n^{(\Si)}(y) = \int_0^y \mathcal W_{n}(y-r) \nu^{(\Si)}(r) d r + \nu^{(\Si)}(y).
\end{equation}
Then,  we get that, for any $y>\epsilon>0$, $\lambda>0$ and $n\geq 1$,
\begin{eqnarray}
|\nu_n^{(\Si)}(y)| &\leq & \sup_{r \in [y-\epsilon,y]}|\mathcal W_{ n}(r)| \int_0^{\epsilon}  |\nu^{(\Si)}(r)| dr +    \sup_{r \in [\epsilon,y]} |\nu^{(\Si)}(r)| \int_\epsilon^y |\mathcal W_{ n}(r)| d r +  |\nu^{(\Si)}(y)|  \nonumber \\
&\leq &   C_\epsilon n (2-A(\lambda))^n + C e^{\lambda y}  (2-A(\lambda))^n 
\end{eqnarray}
where for the second inequality we used  \eqref{bound_mathcalW_pthderiv} with $l=0$, \eqref{lapl_absmathcalWn} and  the fact that $\nu^{(\Si)} \in L^1(\R_+)$, which is given in Proposition \ref{thm:regularity_nu}. 
Hence  \eqref{bound_mathcalW_pthderiv2} follows for $\Si\geq 1$.

Second, if $\Si=0$  and $\overline{\kappa}(0^+)<\infty$, then  by Proposition \ref{thm:regularity_nu}\eqref{it:rp}, the fact that $\mathcal W_{n}$ is in $C(\R_+)$ and an application of the dominated convergence Theorem to \eqref{conv_eq_nu_n}, $\nu_n \in C(\R_+)$. The   bound  \eqref{bound_mathcalW_pthderiv2}  is derived by means of similar arguments as in the previous case  but using  the identity  \eqref{conv_eq_nu_n}. This completes the proof of  \eqref{it:si1}.

\subsubsection{Proof of Proposition \ref{prop:nu_n}\eqref{it:siq}}
 Assume $\kappa \in C^{1}(\R_+)$, $\kappa'\in L^1_{loc}(\R_+)$  and $\underline{\kappa}(0^+)=\overline{\kappa}(0^+)$ and let $\bar{\mathfrak{q}}\geq 1$ be as in Theorem \ref{thm:regularity}\eqref{it:defq}.
    Then invoking Proposition \ref{thm:regularity_nu}\eqref{it:r2} and Lemma \ref{lem_smooth_mathcalW} and by applying Lemma \ref{lem_conv_diff_split} to the identity \eqref{conv_eq_nu_n} in the case where  $\underline{\kappa}=0$ and to \eqref{conv_eq_nu_n_pthorder} in the case where  $\underline{\kappa}\geq 1$, we get that $\nu_n\in C^{\Si+\bar{\mathfrak{q}}}(\R_+)$ with, for any $y>0$,
\begin{eqnarray*}
\nu^{(\Si+\bar{\mathfrak{q}})}_n (y) &= & \int_0^{\frac{y}{2}}   \mathcal W^{(\bar{\mathfrak{q}})}_{ n}(y-r) \nu^{(\Si)}(r) d r + \int_0^{\frac{y}{2}} \nu^{(\Si+\bar{\mathfrak{q}})}(y-r) \mathcal W_{ n}(r) d r +  \nu^{(\Si+\bar{\mathfrak{q}})}(y)  \\
& +&  e^{-\overline{\Phi}_{\nu}} \mathcal W^{(q)}_{ n}(y)+ \tfrac 12\sum_{j=0}^{\bar{\mathfrak{q}}-1} \left(  \nu^{(\Si+j)}(\tfrac{y}{2}) \mathcal W_{ n} (\tfrac{y}{2}) +    \nu^{(\Si)}(\tfrac{y}{2})  \mathcal W_{ n}^{(j)} (\tfrac{y}{2})  \right)^{(\bar{\mathfrak{q}}-1-j)}.
\end{eqnarray*}
The bound \eqref{bound_mathcalW_pthderiv3}  is obtained  from this identity in the same way as the previous bound, the tedious details being left to the reader. This completes the proof of Proposition \ref{prop:nu_n}.

We end this part  with the following results which provide  additional but more specific properties regarding the set of eigenmeasures and complement the results on the set of eigenfunctions stated in Proposition \ref{prop:eigen}. Notions introduced below are classical and their definitions can be found for instance in the textbooks \cite{Dunford_II} and \cite{Young}.
\begin{prop}
Let $(\psi, \phi)\in {\Ne}\times \Be$. Then, for all $n,m \in \mathbb{N}$, we have
\begin{equation} \label{eq:bio}  \mathcal V_m \mathcal{L}_n  = {\delta}_{n,m}\end{equation}
 where $\delta_{n,m}$ is the Kronecker symbol. Assume now that $\phi\neq 0$. Moreover, for all $n\geq 0$, the measure ${\mathcal{V}}_n$ is absolutely continuous with respect to $\mathcal{V}$ and we write  $\overline{\mathcal{V}}_n$ for the corresponding Radon-Nykodim derivative. 
 If for some $n\in \mathbb{N}$, $\overline{\mathcal{V}}_n \in L^{2}(\mathcal{V})$ then $e^{-\lambda_n t} \in {\rm S}_p(P^*_t)$, where $P^*_t$ is the adjoint of $P_t$ in $L^{2}(\mathcal{V})$. Moreover if $(\overline{\mathcal{V}}_n)_{n\geq0} \in L^{2}(\mathcal{V})$   then the geometric and algebraic multiplicity of $e^{-\lambda_n t}$ is $1$ for all $n\geq 0$. Finally, in this case, $(\mathcal{L}_n,\overline{\mathcal{V}}_n)_{n\geq0}$  is a biorthogonal sequence in $L^{2}(\mathcal{V})$.
\end{prop}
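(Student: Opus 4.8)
The plan is to establish the three assertions in turn—the biorthogonality relation \eqref{eq:bio} together with the absolute continuity, the adjoint point-spectrum claim, and finally the simplicity of the multiplicities—where the first is a generating-function computation and the last is the genuinely delicate point.

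First I would prove \eqref{eq:bio} by integrating the generating function of Proposition \ref{prop:Analicity_of_G} against $\mathcal V_m$. Writing $\Gx(z)=e^{\piv(B(z))}e_{B(z)}(x)$, for $z$ in a small disc about the origin (so that $B(z)>-(R_A\wedge R_\phi)$, using $B(0)=0$ from Lemma \ref{Analicity_of_A_B_F}) and invoking the Laplace transform $\mathcal V_m e_\lambda=e^{-\piv(\lambda)}A(\lambda)^m$ from \eqref{eq:lt_nu_n_proof} together with $A(B(z))=z$, one finds
\begin{equation*}
\int_0^\infty \Gx(z)\,\mathcal V_m(dx)=e^{\piv(B(z))}\,\mathcal V_m e_{B(z)}=e^{\piv(B(z))}e^{-\piv(B(z))}A(B(z))^m=z^m .
\end{equation*}
The uniform bound $|\mathcal L_n(x)|\leq Ce^{\overline B x}R^{-n-1}$ of \eqref{eq:bpol}, combined with $|\mathcal V_m|e_{-\overline B}\leq e^{-\piv(-\overline B)}(2-A(-\overline B))^m<\infty$ from \eqref{bound_lapl_abs_Vn} (finite once $R$, hence $\overline B=\max_{|z|=R}|B(z)|$, is chosen small enough that $\overline B<R_A\wedge R_\phi$), justifies interchanging sum and integral, so that $\int\Gx(z)\,\mathcal V_m(dx)=\sum_{n\geq0}(\mathcal V_m\mathcal L_n)z^n$; matching Taylor coefficients gives $\mathcal V_m\mathcal L_n=\delta_{n,m}$. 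Assuming $\phi\neq0$, Proposition \ref{prop:invariant_measure} yields $\nu>0$ a.e., so $\mathcal V_n$ and $\mathcal V$ share the atom $e^{-\biv}\delta_0$ and have Lebesgue densities; thus $\mathcal V_n\ll\mathcal V$ with $\overline{\mathcal V}_n$ equal to $\nu_n/\nu$ off the origin and to $1$ at the atom. Then \eqref{eq:bio} reads $\langle\mathcal L_n,\overline{\mathcal V}_m\rangle_{L^2(\mathcal V)}=\int\mathcal L_n\,d\mathcal V_m=\delta_{n,m}$, which is precisely the biorthogonality in $L^2(\mathcal V)$.

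For the adjoint statement I would combine the eigenmeasure identity \eqref{eq:eigenmeasure_P} with the $L^2(\mathcal V)$-realisation of $P_t$ from Proposition \ref{prop:eigen}. For $f$ bounded and measurable (such $f$ lie in $\mathcal D_{R_A\wedge R_\phi}$, taking $\lambda=0<R_A\wedge R_\phi$) and for $\overline{\mathcal V}_n\in L^2(\mathcal V)$,
\begin{equation*}
\langle P_tf,\overline{\mathcal V}_n\rangle_{L^2(\mathcal V)}=\int P_tf\,d\mathcal V_n=\mathcal V_nP_tf=e^{-\lambda_n t}\mathcal V_nf=e^{-\lambda_n t}\langle f,\overline{\mathcal V}_n\rangle_{L^2(\mathcal V)} .
\end{equation*}
Both sides are continuous in $f$ for the $L^2(\mathcal V)$ norm, since $P_t$ is a contraction there and $\overline{\mathcal V}_n\in L^2(\mathcal V)$; as bounded measurable functions are dense in $L^2(\mathcal V)$, the identity extends to all $f\in L^2(\mathcal V)$, giving $P^{*}_t\overline{\mathcal V}_n=e^{-\lambda_n t}\overline{\mathcal V}_n$. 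Since $\langle\mathcal L_n,\overline{\mathcal V}_n\rangle=1$ forces $\overline{\mathcal V}_n\neq0$, this places $e^{-\lambda_n t}$ in the point spectrum of $P^{*}_t$.

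To treat the multiplicities, assume $(\overline{\mathcal V}_n)_{n\geq0}\subset L^2(\mathcal V)$ and take $g\in\ker(P_t-e^{-\lambda_n t})$. Pairing with $\overline{\mathcal V}_m$ and using $P^{*}_t\overline{\mathcal V}_m=e^{-\lambda_m t}\overline{\mathcal V}_m$ gives $(e^{-\lambda_n t}-e^{-\lambda_m t})\langle g,\overline{\mathcal V}_m\rangle=0$, whence $\mathcal V_mg=\langle g,\overline{\mathcal V}_m\rangle=0$ for every $m\neq n$, the eigenvalues $e^{-\lambda_k t}$ being pairwise distinct. Subtracting, $h:=g-(\mathcal V_ng)\mathcal L_n$ again lies in $\ker(P_t-e^{-\lambda_n t})$ and satisfies $\mathcal V_mh=0$ for all $m$; geometric simplicity amounts to showing such an $h$ vanishes. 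I would obtain this by writing $h=e^{\lambda_n t}P_th$ and arguing that $P_t$ carries $L^2(\mathcal V)$ into $\mathcal D_t$—via Cauchy--Schwarz, $|P_th(x)|\leq\|h\|_{L^2(\mathcal V)}\,\|dP_t(x,\cdot)/d\mathcal V\|_{L^2(\mathcal V)}$, the last factor being controlled through the density estimates of Proposition \ref{prop:nu_n} and \eqref{eq:bpol}—so that Theorem \ref{thm:spectral-expansion} applies to $h$ and yields $P_th=\sum_{m}e^{-\lambda_m t}\mathcal L_m(\mathcal V_mh)=0$; since $P_th=e^{-\lambda_n t}h$ this forces $h=0$, so $g\in\mathbb R\mathcal L_n$ and the geometric multiplicity is $1$. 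Algebraic simplicity follows formally: were there a Jordan chain, an eigenvector $v=c\mathcal L_n$ with $c\neq0$ would be of the form $v=(P_t-e^{-\lambda_n t})w$, but pairing with $\overline{\mathcal V}_n$ gives $c=\langle v,\overline{\mathcal V}_n\rangle=\langle w,(P^{*}_t-e^{-\lambda_n t})\overline{\mathcal V}_n\rangle=0$, a contradiction.

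The main obstacle is exactly this last vanishing, i.e.\ the \emph{totality} of $(\overline{\mathcal V}_n)$ in $L^2(\mathcal V)$—equivalently, that an element of $L^2(\mathcal V)$ annihilated by every $\mathcal V_m$ must be zero. Completeness of $(\mathcal L_n)$ (Proposition \ref{prop:eigen}) does \emph{not} by itself yield totality of the biorthogonal family, so I do not expect a purely abstract Hilbert-space argument to suffice; instead the point is to show that an $L^2(\mathcal V)$-eigenfunction, being reproduced by $P_t$, inherits enough integrability to belong to the domain $\mathcal D_t$ on which the spectral expansion of Theorem \ref{thm:spectral-expansion} is valid, after which the expansion closes the argument. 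The quantitative content of that step is supplied by the uniform kernel bounds already established for $\mathcal L_n$ and $\nu_n$.
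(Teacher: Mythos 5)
Your proofs of \eqref{eq:bio}, of the absolute continuity $\mathcal{V}_n\ll\mathcal{V}$, and of $e^{-\lambda_n t}\in{\rm S}_p(P_t^*)$ are correct and essentially coincide with the paper's own argument. For \eqref{eq:bio} the paper computes $\mathcal{V}_m\mathcal{L}_n$ through the Cauchy integral \eqref{eq:pol_cont} and Fubini, justified by exactly your two bounds \eqref{eq:bpol} and \eqref{bound_lapl_abs_Vn}; extracting the Taylor coefficients of $z\mapsto\int_0^\infty \Gx(z)\mathcal{V}_m(dx)=z^m$, as you do, is the same computation in different clothing (and your requirement $\overline{B}<R_A\wedge R_\phi$ is the correct one for invoking \eqref{bound_lapl_abs_Vn}). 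For the adjoint claim, the paper pairs \eqref{eq:eigenmeasure_P} directly against $f\in L^2(\mathcal{V})$; your extension from bounded $f$ by density and $L^2(\mathcal{V})$-contractivity is a more careful rendering of the same step, and observing that $\langle\mathcal{L}_n,\overline{\mathcal{V}}_n\rangle_{\mathcal{V}}=1$ forces $\overline{\mathcal{V}}_n\neq 0$ is a point the paper leaves implicit.

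The multiplicity statement is where your proposal has a genuine gap, and it is precisely the part the paper does not prove in-house: it cites \cite[Proposition 2.27]{PS14}. Your reduction is sound (an eigenfunction $g$ satisfies $\mathcal{V}_m g=0$ for $m\neq n$; subtracting $(\mathcal{V}_n g)\mathcal{L}_n$ leaves $h$ with $\mathcal{V}_m h=0$ for all $m$), and so is the derivation of algebraic simplicity from geometric simplicity. But the crucial step, namely $h=e^{\lambda_n t}P_t h$ together with $P_t\bigl(L^2(\mathcal{V})\bigr)\subset\mathcal{D}_t$ so that Theorem \ref{thm:spectral-expansion} applies, rests on the assertion that $\bigl\| dP_t(x,\cdot)/d\mathcal{V}\bigr\|_{L^2(\mathcal{V})}$ is finite and grows at most like $e^{\lambda x}$ with $\lambda<\bar\lambda_t$, "controlled through Proposition \ref{prop:nu_n} and \eqref{eq:bpol}". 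This does not follow from those results. Via \eqref{eq:transition_kernel} the Radon--Nikodym derivative is formally $\sum_n e^{-\lambda_n t}\mathcal{L}_n(x)\overline{\mathcal{V}}_n$, so one would need $\sum_n e^{-\lambda_n t}\vert\mathcal{L}_n(x)\vert\,\Vert\overline{\mathcal{V}}_n\Vert_{L^2(\mathcal{V})}<\infty$; your hypothesis is only the qualitative membership $\overline{\mathcal{V}}_n\in L^2(\mathcal{V})$ and gives no control of $\Vert\overline{\mathcal{V}}_n\Vert_{L^2(\mathcal{V})}$ as $n$ grows. Equivalently, bounding $\int_0^\infty p_t(x,y)^2\nu(y)^{-1}dy$ requires lower bounds on $\nu$ near $0$ and decay information at infinity, whereas Proposition \ref{prop:nu_n} supplies only upper bounds for $\nu_n$ on compacts and Laplace-transform upper bounds, which cannot control a quotient by $\nu$. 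So the totality of $(\overline{\mathcal{V}}_n)_{n\geq0}$ in $L^2(\mathcal{V})$ --- which you rightly identify as the crux --- remains unproven, and with it the geometric (hence algebraic) simplicity. A secondary defect: even granting that step, your argument uses $\mathcal{D}_t$ and the spectral expansion and therefore covers only $t>T_0$, whereas the statement, read alongside Proposition \ref{prop:eigen}, concerns every $t>0$.
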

\begin{remark}
Note that showing that  $\overline{\mathcal{V}}_n \in L^{2}(\mathcal{V})$ for some $n\in \mathbb{N}$ seems to be a difficult problem as one has to get precise asymptotic estimate for small and large values of the argument of the functions  $\nu_n $ and $\nu$. However, in the example \ref{example_specrep}, one easily gets  that  $\overline{\mathcal{V}}_n(y)= \sum_{j=0}^n \binom n j (-1)^j \frac{y^{\alpha j}}{\Gamma(\alpha j +1)} \in L^{2}(\mathcal{V})$ where we recall that in this case $\mathcal{V}(dy)=e^{-y}dy$.
\end{remark}
\begin{proof}
 First, observe, from \eqref{eq:pol_cont},  that, for all $n,m \in \mathbb{N}$, \[ \mathcal V_m  \mathcal{L}_n = \int_0^{\infty}\frac{1}{2 \pi i}\oint \frac{\Gx(z)}{z^{n+1}}dz\mathcal V_m(dx).\] Next, using \eqref{eq:est_cont}, we have, choosing $R$ such that $0<\overline{B}<R_A$ with $\overline B=\max_{|z|= R}|B(z)|$,
\[ \int_0^{\infty}\left|\oint \frac{\Gx(z)}{z^{n+1}}dz \right| |{\mathcal V}_m|(dx) \leq C_R \int_0^{\infty} e^{\overline{B}x} |{\mathcal V}_m|(dx) <\infty \]
where $C_R>0$ and the last inequality follows from \eqref{bound_lapl_abs_Vn}. Thus,  an application of Fubini Theorem yields
\begin{eqnarray*}
\mathcal V_m  \mathcal{L}_n &=&  \frac{1}{2 \pi i}\oint e^{\piv(B(z))} \mathcal V_m e_{B(z)}\frac{dz}{z^{n+1}} \\
&=& \frac{1}{2 \pi i}\oint  \frac{1}{z^{n-m+1}} dz ={\delta}_{n,m}.
\end{eqnarray*}
Next, let $f \in L^{2}(\mathcal{V})$ then, for any $n\in \mathbb{N}$,  using \eqref{eq:eigenmeasure_P},
\[ \langle  f , P^*_t\overline{\mathcal{V}}_n\rangle_{\mathcal{V}}= \langle P_t f , \overline{\mathcal{V}}_n\rangle_{\mathcal{V}} = e^{-nt}\langle  f , \overline{\mathcal{V}}_n\rangle_{\mathcal{V}}, \]
which shows that $e^{-nt}\in {\rm S}_p(P^*_t)$. The multiplicity of the eigenvalues is proved in \cite[Proposition 2.27]{PS14} whereas the last statement follows from \eqref{eq:bio}.
	
\end{proof}

\section{Proof of Theorem \ref{thm:spectral-expansion}, Proposition \ref{cor:exp-ergodicity}  and Theorem \ref{thm:regularity}} \label{sec:proof_mr}
\subsection{Proof of Theorem \ref{thm:spectral-expansion} and Theorem \ref{thm:regularity}\eqref{it:Cinfty}, \eqref{it:CinftyC0} and \eqref{it:sh}}\label{sec:proof_one}

The proof 
is split into several steps. We first establish that  the CBI-semigroup $P$ coincide  on appropriate linear spaces with the following two  (linear spectral) operators defined on $B_b(\Rpo)$, for any  $x\geq 0$, by
\begin{equation}\label{eq:def-SandbarS}
S_tf(x) = \sum_{n=0}^\infty e^{-\eign  t} \mathcal{V}_nf \: {\mathcal L}_n(x)\quad \mbox{ and } \quad \overline{S_t}f (x) = \int_0^{\infty} f(y) S_t\delta_y(x),
\end{equation}
where we use the notation
$
S_t\delta_y(x) = \sum_{n=0}^\infty e^{-\eign  t} \mathcal{V}_n(dy) \: {\mathcal L}_n(x)
$. First we show that $S_tf=P_tf$ on $\Lambda$ and then that $\overline{S_t}f= S_tf$ on $\mathcal{D}_t$.

\begin{lemma}\label{lem:StequalsPt} For all  $t> T_0$, $ x \geq 0$ and  $f\in \Lambda$, we have
$S_tf(x) = P_tf(x)$. Consequently, $(S_t)_{t > T_0}$ is a densely defined continuous semigroup on $C_0(\Rpo) $, endowed with the uniform topology $\Vert .\Vert_{\infty}$, i.e.~for any $f\in \Lambda$,
$\Vert S_tf\Vert_{\infty} \leq   \Vert f\Vert_{\infty}$, with $P_t$   its unique contraction semigroup extension on the closure of $\Lambda$, that is on $\overline{\Lambda}=C_0(\Rpo) $.
\end{lemma}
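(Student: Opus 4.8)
The plan is to prove the identity $S_tf = P_tf$ first for the generating exponentials and then extend by linearity. Since $\Lambda = \mathrm{Span}(e_\lambda)_{\lambda>0}$ and both $S_t$ and $P_t$ act linearly, it suffices to establish $S_t e_\lambda(x) = P_t e_\lambda(x)$ for each fixed $\lambda>0$, $x\geq 0$ and $t>T_0$.

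The core of the argument is to match the power series expansion of Ogura's Laplace transform with the spectral series. I would start from \eqref{eq:lt_cbi}, namely $P_t e_\lambda(x) = e^{-\piv(\lambda)}\Gx(A(\lambda)e^{-mt})$. The crucial observation is that, since $A(\lambda)\in[0,1)$ for $\lambda\geq 0$ by Lemma \ref{Analicity_of_A_B_F}, and since $t>T_0$ forces $e^{-mt}<e^{-mT_0}=R_0$ by the definition \eqref{def:t0} of $T_0$, the argument satisfies $0\leq A(\lambda)e^{-mt}<R_0$ and therefore lies strictly inside the disk of convergence of the generating series \eqref{eq:def-Ln_statement}. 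This legitimizes the expansion
\[
\Gx(A(\lambda)e^{-mt}) = \sum_{n=0}^{\infty}\mathcal{L}_n(x)\bigl(A(\lambda)e^{-mt}\bigr)^n = \sum_{n=0}^{\infty}e^{-\eign t}\mathcal{L}_n(x)A(\lambda)^n,
\]
where the last equality uses $\eign = mn$. Multiplying through by $e^{-\piv(\lambda)}$ and invoking the identity $\mathcal{V}_n e_\lambda = e^{-\piv(\lambda)}A(\lambda)^n$ established in \eqref{eq:lt_nu_n_proof}, I would recover precisely $\sum_{n=0}^{\infty}e^{-\eign t}\mathcal{V}_n e_\lambda\,\mathcal{L}_n(x) = S_t e_\lambda(x)$, which is the desired identity.

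For the consequence, the argument is then immediate: because $S_t$ and $P_t$ coincide on $\Lambda$, the contraction bound $\Vert S_tf\Vert_{\infty} = \Vert P_tf\Vert_{\infty} \leq \Vert f\Vert_{\infty}$ for $f\in\Lambda$ is inherited directly from the fact that $P$ is a (Feller) contraction semigroup on $C_0(\Rpo)$. Since $\Lambda$ is dense in $C_0(\Rpo)$ by the Stone--Weierstrass theorem (as noted after the definition of $\Lambda$), the bounded operator $S_t|_{\Lambda}$ admits a unique continuous extension to $\overline{\Lambda}=C_0(\Rpo)$, and this extension is exactly $P_t$.

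The main subtlety, although a mild one, is ensuring the argument $A(\lambda)e^{-mt}$ remains within the radius of convergence $R_0$; this is precisely where the restriction $t>T_0$ enters and explains why the spectral representation is valid only for $t>T_0$ rather than for all $t\geq 0$. I do not expect any genuine obstacle in the interchange of summation, as the power series converges absolutely on the open disk of radius $R_0$, so no estimate beyond the convergence radius is needed at this stage; the more delicate uniform bounds (via \eqref{bound_lapl_abs_Vn}) will only be required later to push the representation from $\Lambda$ onto the larger space $\mathcal{D}_t$.
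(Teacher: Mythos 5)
Your proposal is correct and follows essentially the same route as the paper: the paper chains the same three identities \eqref{eq:lt_nu_n_proof}, \eqref{eq:def-Ln_statement} and \eqref{eq:lt_cbi} (merely reading the computation in the opposite direction, from $S_te_\lambda$ to $P_te_\lambda$), with the identical key point that $0\leq A(\lambda)e^{-mt}<e^{-mT_0}=R_0$ justifies the generating-series expansion, and then concludes by linearity, the contraction property of the Feller semigroup $P$, and density of $\Lambda$ in $C_0(\Rpo)$.
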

\begin{proof}
Using successively the identities
\eqref{eq:lt_nu_n_proof}, \eqref{eq:def-Ln_statement} and  \eqref{eq:lt_cbi},  we have, for any $\lambda>0$,
\begin{eqnarray*}
S_t e_{\lambda}	(x)  &=&  \sum_{n=0}^\infty e^{- \eign t}\mathcal V_n e_\lambda {\mathcal L}_n(x)   =
\sum_{n=0}^\infty e^{- \eign t} e^{-\piv(\lambda)} A(\lambda)^n{\mathcal L}_n(x)  \\
&=&  e^{-\piv(\lambda)}\Gx(A(\lambda)e^{-mt}) \\ &=& P_te_\lambda (x)
\end{eqnarray*}
where we note that \eqref{eq:def-Ln_statement} is valid since $A(\lambda) \in (0,1)$ on $\R_+ $ and hence  $0<A(\lambda)e^{- mt} < e^{- m T_0} = R_0$. This completes the proof as plainly  $S_t$ is linear on $\Lambda$ and $P$ is a Feller semigroup on $\Rpo$.
\end{proof}

\begin{lemma} \label{lem:StequalsbarSt}  For all  $t> T_0$, $ x \geq 0$ and  $f\in \mathcal{D}_t$, we have $\overline{S_t}f (x) = S_tf(x)$. Note that $\Lambda_{t}={\rm Span}(e_{\lambda},\lambda > -\bar \lambda_t) \subset \mathcal{D}_t$, where $\bar \lambda_t$ is defined in \eqref{eq:defBtbar_and_H}.
\end{lemma}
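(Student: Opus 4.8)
The plan is to prove the identity by a Fubini--Tonelli interchange of the series defining $S_t$ with the integration against $f$, the entire difficulty being to certify absolute convergence on the nose, so that the convergence threshold matches the piecewise definition of $\bar\lambda_t$ in \eqref{eq:defBtbar_and_H}. First I would fix $f\in\mathcal D_t$ and choose $C>0$ and $\lambda<\bar\lambda_t$ with $|f(y)|\le Ce^{\lambda y}$ for a.e.\ $y\ge0$; since $y\ge0$ we may and do assume $0\le\lambda<\bar\lambda_t$. Because $\bar\lambda_t\le R_A\wedge R_\phi$ in both regimes of \eqref{eq:defBtbar_and_H}, we have $-\lambda>-(R_A\wedge R_\phi)$, so the uniform bound \eqref{bound_lapl_abs_Vn} applies with argument $-\lambda$ and yields
\begin{equation*}
\int_0^\infty |f(y)|\,|\mathcal V_n|(dy)\le C\,|\mathcal V_n|e_{-\lambda}\le C\,e^{-\piv(-\lambda)}(2-A(-\lambda))^n .
\end{equation*}

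Next I would combine this with the eigenfunction bound \eqref{eq:bpol}: for any $R\in(0,R_0)$ there are $C_R>0$ and $\overline B=\max_{|z|=R}|B(z)|$ with $|\mathcal L_n(x)|\le C_R e^{\overline B x}R^{-n-1}$, whence, recalling $\lambda_n=mn$,
\begin{equation*}
\sum_{n=0}^\infty e^{-\lambda_n t}|\mathcal L_n(x)|\int_0^\infty|f(y)|\,|\mathcal V_n|(dy)
\le \frac{C_R\,C\,e^{\overline B x}e^{-\piv(-\lambda)}}{R}\sum_{n=0}^\infty\left(\frac{e^{-mt}(2-A(-\lambda))}{R}\right)^n .
\end{equation*}
The geometric series converges precisely when $R>e^{-mt}(2-A(-\lambda))$, so an admissible $R\in\left(e^{-mt}(2-A(-\lambda)),R_0\right)$ exists if and only if $e^{-mt}(2-A(-\lambda))<R_0$; since $R_0=e^{-mT_0}$ by \eqref{def:t0}, this is the condition $A(-\lambda)>2-e^{m(t-T_0)}$.

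The crux is therefore to check that $\lambda<\bar\lambda_t$ is exactly what guarantees $A(-\lambda)>2-e^{m(t-T_0)}$, which I would verify through the two cases of \eqref{eq:defBtbar_and_H}. When $t\ge T_0+\frac1m\ln(2-A(-R_A))$ one has $2-e^{m(t-T_0)}\le A(-R_A)$, and since $A$ is increasing (Lemma \ref{Analicity_of_A_B_F}) and $\lambda<\bar\lambda_t=R_A\wedge R_\phi\le R_A$ we get $A(-\lambda)>A(-R_A)\ge 2-e^{m(t-T_0)}$. When $T_0<t<T_0+\frac1m\ln(2-A(-R_A))$ one has $A(-R_A)<2-e^{m(t-T_0)}<1$, so $B(2-e^{m(t-T_0)})$ is well defined, and, $A$ and $B$ being increasing inverses, $A(-\lambda)>2-e^{m(t-T_0)}$ is equivalent to $\lambda<-B(2-e^{m(t-T_0)})$, which together with $\lambda<R_\phi$ is exactly $\lambda<\bar\lambda_t$. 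In both cases the double series of absolute values is finite, so by Tonelli followed by Fubini I may interchange $\sum_n$ with $\int_0^\infty f(y)\,(\cdot)\,dy$, treating the atom $e^{-\biv}\delta_0$ and the absolutely continuous part $\nu_n$ simultaneously through $|\mathcal V_n|$, to obtain
\begin{equation*}
\overline{S_t}f(x)=\int_0^\infty f(y)\sum_{n=0}^\infty e^{-\lambda_n t}\mathcal L_n(x)\,\mathcal V_n(dy)
=\sum_{n=0}^\infty e^{-\lambda_n t}\mathcal L_n(x)\int_0^\infty f(y)\,\mathcal V_n(dy)=S_tf(x).
\end{equation*}

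Finally, for the inclusion $\Lambda_t\subset\mathcal D_t$, I would note that for $e_\lambda$ with $\lambda>-\bar\lambda_t$ one has $-\lambda<\bar\lambda_t$, so any $\lambda'\in[-\lambda,\bar\lambda_t)$ gives $e_\lambda(y)e^{-\lambda'y}=e^{-(\lambda+\lambda')y}\in L^\infty(\R^+)$, whence $e_\lambda\in\mathcal D_t$; as $\mathcal D_t$ is a linear space the span is contained in it. The only genuinely delicate point is the bookkeeping identifying the convergence threshold $e^{-mt}(2-A(-\lambda))<R_0$ with the piecewise formula for $\bar\lambda_t$; everything else is a routine application of the two uniform bounds \eqref{eq:bpol} and \eqref{bound_lapl_abs_Vn} together with Fubini's theorem.
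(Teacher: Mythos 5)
Your proposal is correct and follows essentially the same route as the paper's proof: bound $|f|$ by $Ce^{\lambda y}$ with $\lambda<\bar\lambda_t$, control $\vert\mathcal V_n\vert\vert f\vert$ via \eqref{bound_lapl_abs_Vn}, control $\vert\mathcal L_n(x)\vert$ via \eqref{eq:bpol}, check that $e^{-mt}(2-A(-\lambda))<R_0$ from the definition of $\bar\lambda_t$ together with the monotonicity of $A$, and conclude by Fubini--Tonelli; your regime-by-regime verification of the geometric ratio is exactly the computation the paper compresses into the single line $e^{-mt}(2-A(-\lambda))<e^{-mt}(2-A(-\bar\lambda_t))\leq e^{-mT_0}=R_0$.

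One sentence of yours is, however, false and must be deleted: ``since $y\ge 0$ we may and do assume $0\le\lambda<\bar\lambda_t$''. For $T_0<t\le T_0+\frac{\ln 2}{m}$ one has $2-e^{m(t-T_0)}\ge 0$, hence $B\left(2-e^{m(t-T_0)}\right)\ge B(0)=0$ and so $\bar\lambda_t\le 0$ in \eqref{eq:defBtbar_and_H}; for such $t$ there is no $\lambda$ with $0\le\lambda<\bar\lambda_t$ (indeed, for these times $\mathcal D_t$ consists only of functions with exponential decay, so raising $\lambda$ to $0$ is not ``without loss of generality''). Fortunately nothing downstream in your argument uses $\lambda\ge 0$: the bound \eqref{bound_lapl_abs_Vn} applies for any $\lambda<\bar\lambda_t\le R_A\wedge R_\phi$ regardless of sign, and your two-case analysis of $A(-\lambda)>2-e^{m(t-T_0)}$ likewise only uses $\lambda<\bar\lambda_t$. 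Deleting that one normalization therefore leaves a complete and correct proof, matching the paper's.
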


\begin{proof}
Let  $f\in \mathcal{D}_t$, which by  definition  in \eqref{eq:defBtbar_and_Ht},   implies that there exists $C >0$, chosen here for sake of clarity greater than $f(0)$,   and $\lambda<\bar{\lambda}_t$ such that $\vert f(y) \vert \leq Ce^{\lambda y}$ for a.e.~$y\geq 0$. 
Thus,  we get
 that
$
 		\vert \mathcal{V}_n \vert \vert  f \vert \leq   C \vert \mathcal{V}_n \vert e_{-\lambda},
$ where  we used the fact that $\mathcal{V}_n$ is absolutely continuous on $\R^+$.
From \eqref{bound_lapl_abs_Vn} and the definition of $\bar{\lambda}_t$ we get  that
$\vert \mathcal{V}_n \vert  e_{-\lambda}
\leq e^{-\piv(-\lambda)}(2-A(-\lambda))^{n}.$
Hence $ e^{-\eign t} \vert\mathcal{V}_n \vert\vert f \vert
\leq   C	e^{-\piv(-\lambda)} \left(e^{-mt}(2-A(-\lambda)) \right)^n$.
From the definition again of $\bar\lambda_t$ and since from Lemma \ref{Analicity_of_A_B_F} $A$ is increasing on $(-R_A,\infty)$, we   have in both cases $e^{-mt}(2-A(-\lambda))<e^{-mt}(2-A(-\bar\lambda_t))\leq  e^{-m T_0} = R_0
$. Thus, we obtain that there exists $R\in (0,R_0)$ such that,
 for any   $n\geq 0$,
 \begin{equation}
	\label{eq:bound_Nunf}
	e^{-\eign t} \vert  \mathcal{V}_n\vert  \vert  f \vert \leq C R^n.
	\end{equation}
Hence, the representation \eqref{eq:uniform_convergence} for $\mathfrak{p}=0$ in Proposition \ref{prop:Analicity_of_G} yields
\[
\int_{0}^{\infty}\vert f(y) \vert \sum_{n=0}^\infty e^{-\eign t}\vert \mathcal{V}_n\vert (dy) \: \vert \mathcal{L}_n(x) \vert
= \sum_{n=0}^\infty e^{-\eign t}\vert \mathcal{V}_n\vert \vert f \vert \: \vert \mathcal{L}_n(x) \vert  <\infty,
\]
 where we recall that the series are locally uniformly convergent.
By Fubini Theorem, this shows that the linear operator $\overline{S_t}  $ is well defined and satisfies $ \overline{S_t}f = S_tf$ on $\mathcal{D}_t$. The statement  $\Lambda_{t} \subset \mathcal{D}_t$ is obvious.
\end{proof}

\noindent From Lemma \ref{lem:StequalsPt} and  Lemma \ref{lem:StequalsbarSt}, we have gained that, for any $t>T_0$,  $P_t$ and $\overline{S_t}$ share the same Laplace transform on $(-\bar \lambda_t, \infty)$. This is sufficient to claim, by injectivity of the Laplace transform, that for all $t>T_0$, $x\geq 0$,
\begin{equation}\label{eq:c13}
P_t(x,dy)= S_t\delta_y(x) = \sum_{n=0}^\infty e^{-\eign  t}  \: {\mathcal L}_n(x) \: \mathcal{V}_n(dy)
\end{equation}
and to  get that
\begin{equation}\label{condforrudin}
 P_tf(x)=S_tf(x)= \sum_{n=0}^{\infty}e^{-\eign  t} \mathcal{V}_n f \: {\mathcal L}_n(x)  \mbox{ on } \mathcal{D}_t\cup \Lambda.
 \end{equation}
Next, for any $f \in \mathcal{D}_t\cup \Lambda$, $x\geq0$ and $t>T_0$,   and integers  $\mathfrak{m},\mathfrak{p}$,
  the equality \eqref{eq:lt_nu_n_proof} if $f \in \Lambda$ or the bound \eqref{eq:bound_Nunf} if $f \in \mathcal{D}_t$ yield that in both cases there exists  $R\in (0, R_0)$ such that for  $n$ large enough $|(-\eign )^{\mathfrak{m}} e^{-\eign  t} \mathcal{V}_n f | \leq R^n$. Thus, there exists $C>0$ such that
\begin{eqnarray*}
\sum_{n=\mathfrak{p}}^{\infty}\left|(-\eign )^{\mathfrak{m}} e^{-\eign  t} \mathcal{V}_n f \: {\mathcal L}_n^{(\mathfrak{p})}(x)\right| &\leq & C \sum_{n=\mathfrak{p}}^{\infty} R^n \: \left|{\mathcal L}_n^{(\mathfrak{p})}(x)\right|.
\end{eqnarray*}
Thus from  \eqref{eq:uniform_convergence} in Proposition \ref{prop:Analicity_of_G}, the series on the right-hand side is locally uniformly convergent in $(t,x)$. Since we already showed that \eqref{condforrudin} holds,  this proves by a classical result in analysis, see e.g. Theorem 7.17 in \cite{Rudin1976}, the claim \eqref{eq:derivatives_of_the_semigroup} in Theorem \ref{thm:spectral-expansion} and shows that $(t,x) \mapsto P_t f(x) \in C^{\infty^2}((T_0,\infty)\times \Rpo)$ for such $f$. Theorem \ref{thm:regularity}\eqref{it:Cinfty} (resp.~Theorem \ref{thm:regularity}\eqref{it:CinftyC0}) is an immediate consequence of this property combined with the fact $P$ is a Markov operator (resp.~that $\mathcal{D}_t\cap C_0(\Rpo) \subset C_0(\Rpo)$ and $P$ is a Feller semigroup).
Recalling that $\mathcal V_n (dy) = e^{-\biv} \delta_0(dy) + \nu_n(y)dy$, the claim \eqref{eq:transition_kernel} is in fact the identity \eqref{eq:c13}. Consequently,  for any $x\geq0$ and $t>T_0\geq0$, $P_t(x,dy)$ is absolutely continuous if and only if
 $e^{-\biv }e^{\piv(B(e^{-m t}))}e^{-xB(e^{-m t})}=0$ which is equivalent to $\biv = \infty$, as $B(e^{-m t}) =\infty$ if and only if $t=0$, which is impossible. The claims \eqref{eq:deck}
 and \eqref{absolute_continuity_condition0} follow.

 Next, for any $t>T_0$, $x \in [0,\rm{K}], \rm{K}>0$, $\lambda<\min\{\bar{\lambda}_t,0\}$ and $y \in [K^{-1},K], K>0$ and for any integers $\mathfrak{m},\mathfrak{p}$, and, any integer $\mathfrak{q}$ such  that $\mathfrak{q}\leq \Si -1$ if $\Si\geq 1$, or $\mathfrak{q}\leq \Si +\bar{\mathfrak{q}}$ with  ${\bar{\mathfrak{q}}}$ as in Theorem \ref{thm:regularity}\eqref{it:defq}, we get
 \begin{eqnarray*}
\sum_{n=\mathfrak{p}}^{\infty}\left|(-\eign )^{\mathfrak{m}} e^{-\eign  t} \nu^{(\mathfrak{q})}_n(y) \: {\mathcal L}_n^{(\mathfrak{p})}(x)\right| &\leq & C \sum_{n=\mathfrak{p}}^{\infty} \eign ^{\mathfrak{m}} e^{-\eign  t} n^a (2-A(-\lambda))^n\: \left|{\mathcal L}_n^{(\mathfrak{p})}(x)\right|
\end{eqnarray*}
where $C>0$  and to estimate $|\nu^{(\mathfrak{q})}_n(y)|$ we used   the  bounds of Proposition \ref{prop:nu_n} with $a=\max(\mathfrak{q}+1-\Si,0).$ 
We conclude that the series is locally uniformly convergent in $(t,x,y)$ by invoking again \eqref{eq:uniform_convergence}  after noting that  $\eign ^{\mathfrak{m}} e^{-\eign  t} n^a (2-A(- {\lambda} ))^n<R^n$ for some $R<R_0$ and all $n$ large enough. Combined with \eqref{eq:c13}, this provides (via  Theorem 7.17 in \cite{Rudin1976}) the expression \eqref{eq:derivatives_of_the_density_in_y} and the proof of Theorem \ref{thm:regularity}\eqref{it:sh}\eqref{it:4a}-\eqref{it:defq}. This completes the proof of Theorem   \ref{thm:spectral-expansion} and Theorem \ref{thm:regularity} \eqref{it:Cinfty}, \eqref{it:CinftyC0} and \eqref{it:sh}.

\subsection{Proof of Proposition  \ref{cor:exp-ergodicity}}
Note that the first claim regarding the existence of an invariant probability measure was proved in Proposition \ref{prop:invariant_measure}.
Next, let $t> \underline{T}=T_0 + \frac{1}{m}\ln (2-A(-R_A))$ and $E\in {\mathcal B}(\Rpo)$. Then, from the  definition \eqref{eq:defBtbar_and_H},  we have   ${\overline \lambda}_t = R_A\wedge R_\phi$  and $\mathbb I_{E} \in {\mathcal D}_t$ as clearly $\mathbb I_{E} \leq e_{-\lambda} $  for any $\lambda \in (0, R_A\wedge R_\phi)$. Therefore, noting that $\lambda_0=0$, ${\mathcal L}_0(x) =1 $ and ${\mathcal V}_0 = {\mathcal V}$, we get, from \eqref{eq:derivatives_of_the_semigroup} in Theorem \ref{thm:spectral-expansion}, that, for any $x\geq0$,
\begin{eqnarray*}
\left \vert P_t(x,E)-{\mathcal V}(E) \right \vert
= \left \vert \sum_{n=1}^\infty e^{- n m  t } {\mathcal L}_n(x) {\mathcal V}_n(E)\right \vert
\leq    \sum_{n=1}^\infty  e^{-nm t } \left \vert  {\mathcal L}_n(x)\right \vert \left \vert  {\mathcal V}_n(E)\right \vert.
\end{eqnarray*}
Now since ${\mathbb I}_{E} \leq e_{-\lambda}$, we have
$
\left \vert  {\mathcal V}_n(E)\right \vert
\leq \left \vert  {\mathcal V}_n\right \vert (E) \leq
\left \vert  {\mathcal V}_n\right \vert e_{-\lambda} \leq e^{-\piv(-\lambda)}(2-A(-\lambda))^n
$
where the last inequality follows from \eqref{bound_lapl_abs_Vn} in Proposition \ref{prop:nu_n} which is valid since   $-\lambda > -R_A\wedge R_\phi$.  Thus
\begin{eqnarray*}
\left \vert P_t(x,E)-{\mathcal V}(E) \right \vert
\leq  e^{-\piv(-\lambda)}  \sum_{n=1}^\infty \left \vert  {\mathcal L}_n(x)\right \vert \left(e^{-mt}(2-A(-\lambda))\right)^n.
\end{eqnarray*}
Now,  from the fact that $A$ is increasing on $(-R_A, \infty)$ one sees that the choices of $t > \underline{T}= T_0 + \frac{1}{m}\ln(2-A(-R_A))$ and of $\lambda < R_A\wedge R_\phi$ ensure that  $0<R=e^{-m \underline{T}}(2-A(-\lambda)) = e^{-m T_0}\frac{2-A(-\lambda)}{2-A(-R_A)} < e^{-m T_0}=R_0$. Thus by \eqref{eq:bpol} in Proposition \ref{prop:Analicity_of_G}, there exist $C=C(R)>0$ and $\overline{B}=\max_{|z|= R}|B(z)|\in \R_+$ such that
\begin{eqnarray*}
\left \vert P_t(x,E)-{\mathcal V}(E) \right \vert
& \leq & e^{-\piv(-\lambda)}  Ce^{\overline{B}x}
\sum_{n=1}^\infty \left(\frac{ e^{-mt}(2-A(-\lambda))}{R}\right)^n \\
& = &  e^{-\piv(-\lambda)} Ce^{\overline{B}x}
\sum_{n=1}^\infty e^{-n m(t-\underline{T})},
\end{eqnarray*}
which plainly proves the Corollary.
\subsection{Proof of Theorem \ref{thm:regularity}\eqref{it:sf}}

 Let us now prove  that $P$ is eventually strong Feller. We mention that  Schilling and Wang in \cite{Schilling-Wang} provide interesting sufficient  conditions on the transition kernel of Feller semigroups which imply  the strong Feller property. However, we have  not been able to use their criteria  and instead we  prove this property directly using Theorem \ref{thm:regularity}\eqref{it:Cinfty}. To this end, let $t >T_0$ and $f\in B_b(\Rpo)$. There exists a non-decreasing sequence $(g_n)_{n\geq 0}$ of non-negative functions in $C_{c}(\Rpo)$
converging pointwise to $g \equiv 1$ and by the monotone convergence Theorem we have $\lim_{n \to \infty} P_tg_n = P_tg$.  As $g_n \in C_{c}(\Rpo)$, the space of continuous functions with compact support, by Theorem \ref{thm:regularity}\eqref{it:Cinfty}, we have $P_tg_n \in C(\Rpo)$. Moreover, by \eqref{eq:CBI-initial}, $P_t g \in C(\Rpo)$. Then Dini  Theorem shows that the convergence is uniform on any compact set in $\Rpo$. Now let $x \in \Rpo$ and $\epsilon >0$. Consider a compact set $K$ of the form $[0, \eta]$ if $x =0$,  $[x-\eta, x+\eta]$ otherwise, then there exists $n_0=n_0(\epsilon) \geq 0$  such that
\[
\sup _{x\in K}\vert  P_t  (g-g_{n_0}) (x)  \vert < \frac{\epsilon}{3\Vert f \Vert_\infty}. \]
Next, since $f g_{n_0} \in B_{b}(\Rpo)$ with compact support,  Theorem \ref{thm:regularity}\eqref{it:Cinfty} entails  that $P_tf g_{n_0} \in C^\infty(\Rpo)$. Therefore,  there exists  $\eta_0=\eta_0(n_0,\epsilon) \in(0,\eta)$ such that, for all $ y\in  U_{0}  = (0, \eta_0)$ if $x =0$ and  $y \in U_{x} =(x-\eta_0, x+\eta_0)$ otherwise, \[\vert P_tfg_{n_0}(x) -  P_tfg_{n_0}(y)\vert < \frac{\epsilon}{3}.\]
Since  $U_{x} \subset K$, we get, using the previous estimates, that for all $y \in U_{x}$,
\begin{eqnarray*}
\vert P_tf(x) -P_tf(y) \vert
  &\leq &
 \vert  P_t f (g-g_{n_0}) (x) \vert + \vert  P_t f (g-g_{n_0}) (y) \vert + \vert P_tfg_{n_0}(x) -  P_tfg_{n_0}(y)\vert \\ &\leq &
 \vert  || f ||_{\infty}\left(\vert P_t  (g-g_{n_0}) (x) \vert + \vert  P_t  (g-g_{n_0}) (y) \vert \right) + \vert P_tfg_{n_0}(x) -  P_tfg_{n_0}(y)\vert \\
 & < & \epsilon.
\end{eqnarray*}
Hence,  from the contraction property of $P_t$, we conclude that for any $t>T_0$ and  $f \in B_b(\Rpo)$,  $P_tf \in C_b(\Rpo)$ which is the (eventually) strong Feller property.

\section{Examples} \label{sec:ex}
We end this paper by detailing some examples of CBI-semigroups which illustrate the variety of smoothness properties that the absolutely continuous part of their transition kernel enjoy. The last example also reveals that our results are sharp in the sense that some instances of CBI-semigroups do not have a better regularity property on $\R_+$ than the one stated in Theorem \ref{thm:regularity}. 

 \subsection{Handa  CBI-semigroups}
In \cite{CBI-Handa},  Handa showed that that   every generalized
gamma convolution distribution is an invariant measure for a CBI-semigroup and then he studied the sector property of this class of CBI-semigroups and we refer to the aforementioned paper for definition. More specifically, let us consider the mechanisms
\begin{equation}\label{eq:mechanisms-Handa}
\psi(u) = \sigma^2 u^2 +mu +\int_0^\infty\left(e^{-ur}-1+ur\right)\Pi(dr)  \quad \mbox{and } \quad \phi(u) =u
 \end{equation}
 where $\sigma^2\geq0$, and
  \begin{equation}\label{eq:def_lm}
  \Pi(dr) = \int_0^\infty v^2e^{-vr}M(dv)  dr,
   \end{equation}
   for some measure $M$ on $\R_+$ such that $\int_0^\infty \frac{M(dv)}{1+v} <\infty$ and which is associated to   a  Thorin measure $\tho$
by the following relation \[
 \int_0^\infty\frac{\tho(dv)}{u+v} = \frac{1}{\frac{u}{\overline{\tho}_0} + \frac{1}{\overline{\tho}_{1}} + \int_0^\infty\frac{u}{u+v}M(dv)}
\]
 where for $k=0,1$, $\overline{\tho}_{k} = \int_0^\infty v^{-k} \tho(dv)$. We recall that a Thorin measure is a measure $\tau$ on $\R_+$ satisfying
 \begin{equation}
 \int_0^{\frac12} \vert \log v\vert \tho(dv)+ \int_{\frac12}^{\infty}\frac{\tho(dv)}{v}<\infty.
 \end{equation}
 Then according to \cite[Theorem 4.3]{CBI-Handa}, we have that the L\'evy measure of $\Phi_{\nu}$, the Laplace exponent of the invariant measure, is $\frac{\kappa(r)}{r}dr,r>0,$ where
 \begin{equation}\label{eq:kappa-Handa}
 \kappa(r)= \int_0^\infty e^{-vr}\tho(dv)=W^{(1)}(r)
 \end{equation}
 is completely monotone and hence the invariant measure is a generalized gamma convolution distribution.
 Note that \eqref{eq:kappa-Handa}  combined with \eqref{eq:pot} yields that
 \[
  \phi_p(u)=\frac{u}{\overline{\tho}_0} + \frac{1}{\overline{\tho}_{-1}} + \int_0^\infty\frac{u}{u+v}M(dv).
\]
Thus, if $M\equiv 0$ on $[0,R)$ for some $R>0$ then it is easy to check  that    $\phi_p \in \mathcal{H}(R)$ and also $\psi \in \mathcal{H}(R)$. Moreover, the condition \eqref{eq:standing_assumption} is according to Lemma \ref{lem:mainco} satisfied if $\sigma^2>0$, i.e.~$\overline{\tho}_0<\infty$, or,  for instance if there exists $g$ positive and non-increasing such that  $\frac{M(dv)}{dv}=g(v)\stackrel{\infty}{\sim} C v^{-\alpha}$, $\alpha \in (1,2)$, as by classical arguments, $\overline{\overline{\Pi}}(r) \stackrel{0}{\sim} C_{\alpha} r^{1-\alpha}$.
From \eqref{eq:kappa-Handa} we see that, as the Laplace transform of a measure on $\R_+$, $\kappa$ is completely monotone. Hence  $\kappa \in C^\infty(\R_+)$ and
\begin{equation}\label{eq:kappa0-Handa}
\underline{\kappa}(0^+)=\overline{\kappa}(0^+) =\tho(\R_+) = W^{(1)}(0^+)=\frac{1}{\sigma^2}  \in (0,\infty],
\end{equation}
where for the last equality we have used Proposition \ref{lem:kap}\eqref{item:lem_kap_sigma}.
Thus, we have that  $\underline{\kappa}(0^+)>0$, which by Lemma \ref{elm:abs} implies that $P_t(x,dy)= p_t(x,y)dy$.
First, if $\tho(\R_+) =\infty$ then, by \eqref{eq:kappa0-Handa},  $\Si =\infty$ and from  Theorem \ref{thm:regularity} \eqref{it:4a}, we have that $(t,x,y) \mapsto p_t(x,y) \in C^{\infty^3}((T_0, \infty)\times\Rpo\times \R)$.
Assume now that $\tho(\R_+) <\infty$. Then, by \eqref{eq:kappa-Handa},   for any $a>0$,
 \[
   \int_0^a \vert \kappa^{(1)}(r)\vert  dr
  =  \int_0^a \int_0^\infty  ve^{-vr}\tho(dv)dr
  = \int_0^\infty (1-e^{-av})\tho(dv) < \tho(\R_+) <\infty,
  \]
  so that $\kappa^{(1)} \in L^1_{loc}(\Rpo)$. Moreover, since by \eqref{eq:kappa-Handa}  $\kappa, W \in C^\infty(\R_+)$, we have that $\bar{\mathfrak{q}}=\infty$ in Theorem \ref{thm:regularity} \eqref{it:defq} and hence $(t,x,y) \mapsto p_t(x,y) \in C^{\infty^3}((T_0, \infty)\times\Rpo\times \R_+)$.

\subsection{Tempered stable}
Let for any $\alpha\in(1,2]$ and $\beta\in(-1,1]$,
\begin{equation*}
\psi(u) =  a \left( (u+\eta_\psi)^\alpha + \gamma u -\eta_\psi^\alpha   \right) \textrm{ and }
\phi( u) = c a \left|( u+\eta_\phi)^\beta    -\eta_\phi^\beta   \right|,
\end{equation*}
where  $a,c,\eta_\psi,\eta_\phi>0$ and $\gamma>-\alpha\eta_\psi^{\alpha-1}$. Then $\psi$ and $\phi$ are of the form \eqref{eq:def_psi} and \eqref{eq:def_Bernstein} and satisfy  \eqref{eq:standing_assumption} and \eqref{Analicity_of_the_mechanismsp} with $m=a ( \alpha \eta_\psi^{\alpha -1} + \gamma )$ and
\begin{equation*}
\begin{split}
& \sigma^2=
\begin{cases}
0 & \text{if $\alpha<2$}, \\
a & \text{if $\alpha=2$},
\end{cases}
\quad
\Pi(d r)=
\begin{cases}
\frac{a\alpha(\alpha-1)}{\Gamma(2-\alpha)} e^{-\eta_\psi r} r^{-\alpha-1}d r & \text{if $\alpha<2$}, \\
0 & \text{if $\alpha=2$},
\end{cases}
\\
& b=
\begin{cases}
0 & \text{if $\beta<1$}, \\
ca & \text{if $\beta=1$},
\end{cases}
\quad
\mu(d r)=
\begin{cases}
\frac{c a}{\Gamma(-\beta)} e^{-\eta_\phi r} r^{-\beta-1}d r & \text{if $\beta< 0$}, \\
\frac{c a\beta}{\Gamma(1-\beta)} e^{-\eta_\phi r} r^{-\beta-1}d r & \text{if $0<\beta<1$}, \\
0 & \text{if $\beta\in\{0,1\}$},
\end{cases}
\end{split}
\end{equation*}
where for $a>0$, $\Gamma(a)=\int_0^\infty e^{-x} x^{a-1} d x$ is the gamma function. Note that $\bar{\mu}(0^+)=\infty$ if $0<\beta<1$. We fix $t>T_0$ and $x\geq 0$ and investigate the regularity of $y\mapsto p_t(x,y)$.

 If $\alpha<\beta+1$, then by combining Proposition \ref{lem:kap}\eqref{item:lem_kap_regvar} and Theorem \ref{thm:regularity}, we have $y\mapsto p_t(x,y)\in C^{\infty}(\mathbb R)$. Suppose now $\alpha>\beta+1$. Then as $\overline{\Pi}$ is completely monotone and $\overline{\mu}\in C^\infty(\mathbb R^+)$ it follows from Remark \ref{remark_smoothness_kappa} and Proposition \ref{lem:diff_kappa}\eqref{it:sigma_nul_W} (if $\alpha<2$ and $\beta>0$), Proposition \ref{lem:diff_kappa}\eqref{it:mu_finite} (if $\alpha<2$ and $\beta\leq 0$) or Proposition \ref{lem:diff_kappa}\eqref{it:sigma_pos} (if $\alpha=2$) that $W,\kappa\in C^\infty(\mathbb R^+)$ and $\kappa^{(1)}\in L^1_{loc}(\mathbb R^+)$ and thus by Theorem \ref{thm:regularity}, $y\mapsto p_t(x,y)\in C^{\infty}(\mathbb R^+)$, provided $\overline{\kappa}(0^+)=\Si(0^+)<\infty$. To show the latter, note that by Proposition \ref{lem:kap}\eqref{item:lem_kap_sigma}, $\overline{\kappa}(0^+)=0$ if $\alpha=2$ and by Proposition \ref{lem:kap}\eqref{item:lem_kap_regvar},  $\overline{\kappa}(0^+)=0$ if $\alpha<2$ and $\beta\leq 0$. If $\alpha<2$ and $\beta>0$, let $\ell( u)=\frac{ u^\alpha}{\psi( u)}$. Then  $\lim_{ u\to\infty}\ell( u)=1/a$ and thus in particular it is a slowly varying function at infinity.
Since
 $$\int_0^\infty   e^{- u y} W^{(1)}(y) d y = \frac{ u}{\psi( u)}=  u^{1-\alpha}\ell( u),$$
 and $W^{(1)}$ is non-increasing (since it is completely monotone), we have by \cite[Theorem 1.7.1' and Theorem 1.7.2b]{BinghamGoldieTeugels87} that
 $$\lim_{y\downarrow 0} \frac{W^{(1)}(y)}{ y^{\alpha-2}} =\frac{1}{a\Gamma(\alpha-1)}.$$
Let $\epsilon>0$ be arbitrary and choose $\delta>0$ be such that $\frac{a\Gamma(\alpha-1) W^{(1)}(y)}{ y^{\alpha-2}} \in [1-\epsilon,1+\epsilon]$ and $e^{-\eta_\phi y}\in [1-\epsilon,1]$ for all $0<y<\delta$. Then, writing $C_{\epsilon}= \frac{c\beta(1+\epsilon)}{\Gamma(\alpha-1)\Gamma(1-\beta)}$, we have, for $0<y<\delta$,
\begin{eqnarray}
\kappa(y) &= & \int_0^y W^{(1)}(y-r)\bar{\mu}(r) dr  \label{upperbound_kappa0}
 \leq  \int_0^y (y-r)^{\alpha-2} \int_r^\infty e^{-\eta_\phi u} u^{-\beta-1}d u d r  \\
&= & C_{\epsilon}\int_0^y (y-r)^{\alpha-2} \left( \int_r^y e^{-\eta_\phi u} u^{-\beta-1}d u  + \int_y^\infty e^{-\eta_\phi u} u^{-\beta-1}d u \right) d r  \nonumber \\
& \leq &
C_{\epsilon}\left(\int_0^y (y-r)^{\alpha-2} \int_r^y  u^{-\beta-1}d u d r  +    \int_0^y (y-r)^{\alpha-2} \int_y^\infty e^{-\eta_\phi u} u^{-\beta-1}d u d r  \right) \nonumber \\
&= &
C_{\epsilon} \left( \frac 1\beta \int_0^y (y-r)^{\alpha-2}  \left( r^{-\beta}-y^{-\beta} \right) d r     +    \frac{y^{\alpha-1}}{\alpha-1}  \int_y^\infty e^{-\eta_\phi u} u^{-\beta-1}d u  \right) \nonumber \\
&= &
C_{\epsilon} \left( \left( \frac{\Gamma(\alpha-1)\Gamma(1-\beta)}{\beta\Gamma(\alpha-\beta)} - \frac 1{(\alpha-1)\beta} \right) y^{\alpha-\beta-1}       +    \frac{y^{\alpha-1}}{\alpha-1}  \int_y^\infty e^{-\eta_\phi u} u^{-\beta-1}d u  \right),\nonumber
\end{eqnarray}
where for the last equality we used, for $\eta_1<1$ and $\eta_2<1$, the identity
\begin{equation*}
\int_0^y (y-r)^{-\eta_1}r^{-\eta_2} d r = \frac{\Gamma(1-\eta_1)\Gamma(1-\eta_2)}{\Gamma(2-\eta_1-\eta_2)} y^{1-\eta_1-\eta_2},
 \end{equation*}
 which can be easily proved via Laplace transforms. Similarly, writing $  \overline{C}_{-\epsilon} =\frac{\Gamma(\alpha-1)\Gamma(1-\beta)}{\beta\Gamma(\alpha-\beta)} - \frac 1{(\alpha-1)\beta} (1-\epsilon)$, we have the lower bound
\begin{eqnarray}\label{lowerbound_kappa0}
\kappa(y) &\geq & C_{-\epsilon} \left( \overline{C}_{-\epsilon} y^{\alpha-\beta-1}+   \frac{y^{\alpha-1}}{\alpha-1}  \int_y^\infty e^{-\eta_\phi u} u^{-\beta-1}d u  \right).
\end{eqnarray}
Next, by means of l'H\^opital's rule, we observe that
\begin{equation}\label{hopital_kappa0}
\frac{y^{\alpha-1}}{\alpha-1}  \int_y^\infty e^{-\eta_\phi u} u^{-\beta-1}d u
 \stackrel{0}{\sim}  \frac{e^{-\eta_\phi y} y^{-\beta-1}}{(\alpha-1)^2 y^{-\alpha} } \stackrel{0}{\sim} \frac{y^{\alpha-\beta-1}}{(\alpha-1)^2}.
\end{equation}
%
Recalling that we are considering the case $\alpha>\beta+1$, it follows from \eqref{upperbound_kappa0} and \eqref{hopital_kappa0} that  $\overline\kappa(0^+)=0$ also when $\alpha<2$ and $\beta>0$. Thus, we conclude  that $y\mapsto p_t(x,y)\in C^{\infty}(\mathbb R+)$   if $\alpha>\beta+1$. Now we consider the remaining case where $\alpha=\beta+1$. If $\alpha=2$, then by Lemma \ref{lem:kap}\eqref{item:lem_kap_sigma}, $\overline{\kappa}(0^+)=\underline{\kappa}(0^+) = c$. If $\alpha<2$, then by \eqref{upperbound_kappa0}-\eqref{hopital_kappa0}, we have for any $\epsilon>0$,
\begin{equation*}
\overline\kappa(0^+)\leq c(1+\epsilon), \quad \underline{\kappa}(0^+) \geq c(1-\epsilon)^2  + \epsilon  \frac{c\beta(1-\epsilon  )}{\Gamma(\alpha-1)\Gamma(1-\beta)(\alpha-1)^2}. 
\end{equation*}
Hence $\overline{\kappa}(0^+)=\Si(0^+)=c$.  Then, by Theorem \ref{thm:regularity},  $ y\mapsto  p_t(x,y)\in  C^{k-1}(\mathbb R)\cap C^k(\mathbb R^+)$ for all integers $k\in [1,c)$. To deal with additional  regularity on $\mathbb R^+$, note that if $\alpha=2$, Theorem \ref{thm:regularity}\eqref{it:b2}   applies via Lemma \ref{lem:diff_kappa}\eqref{it:sigma_pos} and Remark \ref{remark_smoothness_kappa} and thus  $y\mapsto p_t(x,y)\in C^{\infty}(\mathbb R^+)$. However, if $\alpha<2$ it is not obvious to us how to prove that $\kappa^{(1)}\in L^1_{loc}(\mathbb R^+)$ since the integral condition in Lemma \ref{lem:diff_kappa}\eqref{it:sigma_nul_W} is not satisfied.
  Therefore we restrict ourselves to the special case where $\eta_\psi=\eta_\phi$ and $\gamma=-\eta_\psi^{\alpha-1}$, which has also been considered in \cite{CLP-F} and \cite{Ogura-70}. In that case, $\frac{\phi( u)}{\psi( u)} =\frac{c}{ u+\eta_\psi}$ and so by Laplace inversion, $\kappa(y)=c e^{-\eta_\psi y}$. Then, recalling that $W$ is in $C^{\infty}(\mathbb R^+)$, Theorem \ref{thm:regularity} \eqref{it:defq}  yields $y\mapsto p_t(x,y)\in C^{\infty}(\mathbb R^+)$.

\subsection{Example of non-smoothness}
 This example provides an instance when  the absolutely continuous part of the transition density  is not in $C^\infty(\R_+)$ in general.
Let us consider the case where $\sigma=1$, $m=1$ and $\Pi(d r) =\delta_1(d r)$ and $b=0$, $\mu\equiv 0$, that is, $P$ is a CB semigroup. Note that from Proposition \ref{prop:transf}(ii), one may construct a CBI-semigroup whose transition kernel has the same  smoothness  properties as the kernel of the CB semigroup. We fix $t>T_0$ and $x\geq 0$. Since $W\in C^2(\R_+)$ when $\sigma>0$, see in \cite[Theorem 1]{Chan-Kyp-Savov}, we know  by Theorems \ref{thm:regularity} and \ref{thm:spectral-expansion} and Lemma \ref{elm:abs} that in this case $P_t(x, d y) = \delta_0(d y)+p_t(x, y)d y$, where
\begin{equation*}
y\mapsto p_t(x,   y) = \sum_{n=0}^\infty \mathcal L_n(x) e^{- \lambda_n t} \mathcal W_{ n}(y) \in C^2(\R_+),
\end{equation*}
 with, with the obvious notation,
\begin{equation}\label{example_CB_1}
\begin{split}
p_t^{(2)}(x,y) = & \sum_{n=0}^\infty \mathcal L_n(x) e^{-\lambda_n t} \mathcal W^{(2)}_{ n}(y) \\
= & \sum_{n=0}^\infty \mathcal L_n(x) e^{-\lambda_n t} \left( \mathcal W^{(2)}_{ n}(y) + \eign \frac{W^{(2)}(y)}{y} \right) -  \sum_{n=0}^\infty \mathcal L_n(x) e^{-\lambda_n t}  \eign \frac{W^{(2)}(y)}{y}.
\end{split}
\end{equation}
We are going to show that $y\mapsto p_t(x,y)\notin C^3(\R_+)$.  By \eqref{scale_2ndderiv_sigma}, we have, for $y>0$,
\begin{equation}\label{example_CB_2}
W^{(2)}(y) = \overline{\overline{\Pi}}(y) + \sum_{n=2}^\infty   (- 1)^n \overline{\overline{\Pi}}{}^{*n}(y).
\end{equation}
As $\overline{\overline{\Pi}}(y) = (1-y)\mathbb{I}_{\{0<y\leq 1\}}$ is an absolutely continuous function with bounded density, we get  by Lemma \ref{lem_conv_diff_onederiv}, that $\overline{\overline{\Pi}}{}^{*n} \in C^{1}(\R_+)$ for $n\geq 2$. It is then an easy exercise to show that the infinite sum on the right hand side of \eqref{example_CB_2} is in $C^1(\R_+)$. Therefore, $W^{(2)}$ is absolutely continuous on $\R_+$ (with a bounded density) and differentiable on $\R_+\backslash\{1\}$ but not  at $y=1$, since $\overline{\overline{\Pi}}(1^-)=-1$ and $\overline{\overline{\Pi}}(1^+)=0$. By \eqref{pderiv_mathcalW},
\begin{multline*}
- \left( \mathcal W^{(2)}_{ n}(y) + \lambda_n \frac{W^{(2)}(y)}{y} \right)
= 2\frac{\mathcal W_{ n}(y)}{y} + \frac{\lambda_n}{y} \Bigg(  \int_0^{\frac 12 y}   W^{(2)}(y-r)\mathcal W_{ n}(r)  d z \\ + \int_0^{\frac 12 y}  \mathcal W_{ n}^{(1)} (y-r)  W^{(1)}(r) d r
 +  W^{(1)}(\tfrac{y}{2}) \mathcal W_{ n} (\tfrac{y}{2})   \Bigg)
\end{multline*}
and thus by invoking Lemma  \ref{lem_conv_diff_onederiv} for the first integral, $y\mapsto y\mathcal W^{(2)}_{n}(y) + \lambda_n W^{(2)}(y) \in C^{1}(\R_+)$. Following the proof of Theorem \ref{thm:regularity}\eqref{it:defq} in Section \ref{sec:proof_one}, we deduce that the first infinite sum on the right hand side of \eqref{example_CB_1} is in $C^1(\R_+)$. But the second infinite sum is not differentiable at $y=1$ and  thus $y\mapsto p_t(x,y)\notin C^3(\R_+)$.

\bibliographystyle{plain}

\end{document}